\tikzstyle arrowstyle=[scale=1]
\tikzstyle directed=[postaction={decorate,decoration={markings,
    mark=at position .5 with {\arrow[arrowstyle]{stealth}}}}]
\tikzstyle reverse directed=[postaction={decorate,decoration={markings,
    mark=at position .5 with {\arrowreversed[arrowstyle]{stealth};}}}]
\spnewtheorem{thm}{Theorem}[section]{\bf}{\it}
\spnewtheorem{cor}[thm]{Corollary}{\bf}{\it}
\spnewtheorem{lem}[thm]{Lemma}{\bf}{\it}
\spnewtheorem{prop}[proposition]{Proposition}{\bf}{\it}
\spnewtheorem{defi}[thm]{Definition}{\bf}{\it}
\spnewtheorem{que}[thm]{Question}{\bf}{\it}
\spnewtheorem{rem}[thm]{Remark}{\bf}{\it}
\spnewtheorem{prob}[thm]{Problem}{\bf}{\it}
\newcommand{\vx}{{\mathbf{x}}}
\newcommand{\vy}{{\mathbf{y}}}
\newcommand{\vN}{{\mathbf{N}}}
\newcommand{\vR}{{\mathbf{R}}}
\newcommand{\cD}{{\mathcal{D}}}
\newcommand{\cG}{{\mathcal{G}}}
\newcommand{\cH}{{\mathcal{H}}}
\newcommand{\cL}{{\mathcal{L}}}
\newcommand{\cN}{{\mathcal{N}}}
\newcommand{\sign}{\mathrm{sign}}
\newcommand{\St}{{\mathrm{subject~to}}} 
\newcommand{\dom}{{\mathrm{dom}}} 
\newcommand{\range}{{\mathrm{range}}} 
\newcommand{\prox}{\mathbf{prox}}
\newcommand{\tnabla}{\widetilde{\nabla}}
\newcommand{\TPRS}{T_{\mathrm{PRS}}}
\newcommand{\best}{\mathrm{best}}
\newcommand{\kbest}{k_{\best}}
\DeclareMathOperator*{\argmin}{arg\,min}
\DeclareMathOperator*{\Min}{minimize}
\DeclareMathOperator*{\Fix}{Fix}
\DeclareMathOperator*{\zer}{zer}
\newcommand{\bc}{\begin{center}}
\newcommand{\ec}{\end{center}}
\newcommand{\bdm}{\begin{displaymath}}
\newcommand{\edm}{\end{displaymath}}
\newcommand{\beq}{\begin{equation}}
\newcommand{\eeq}{\end{equation}}
\newcommand{\bfl}{\begin{flushleft}}
\newcommand{\efl}{\end{flushleft}}
\newcommand{\bt}{\begin{tabbing}}
\newcommand{\et}{\end{tabbing}}
\newcommand{\beqn}{\begin{align}}
\newcommand{\eeqn}{\end{align}}
\newcommand{\beqs}{\begin{align*}} 
\newcommand{\eeqs}{\end{align*}}  
\newcommand\numberthis{\addtocounter{equation}{1}\tag{\theequation}}
\DeclarePairedDelimiter{\dotp}{\langle}{\rangle}
 \DeclarePairedDelimiter{\ceil}{\lceil}{\rceil}
\newcommand{\refl}{\mathbf{refl}}
\newcommand{\TFBS}{T_{\mathrm{FBS}}}
\spnewtheorem{assump}{Assumption}{\bf}{\it}
\def\cut#1{{}}
\definecolor{blue}{rgb}{0,0,0}
\begin{document}

\title{Convergence rate analysis of several splitting schemes}
\author{Damek Davis   \and
        Wotao Yin}

\institute{D. Davis \and W. Yin\at
              Department of Mathematics, University of California, Los Angeles\\
              Los Angeles, CA 90025, USA\\
              \email{damek / wotaoyin@ucla.edu}           
}

\date{Received: date / Accepted: date}

\maketitle

\abstract{
Splitting schemes are a class of powerful algorithms that solve complicated monotone inclusions and convex optimization problems that are built from many simpler pieces. They give rise to algorithms in which the simple pieces of the decomposition are processed individually. This leads to easily implementable and highly parallelizable algorithms, which often obtain nearly state-of-the-art performance.  

In the first part of this paper, we analyze the convergence rates of several general splitting algorithms and provide examples to prove the tightness of our results.  The most general rates are proved for the \emph{fixed-point residual} (FPR) of the  Krasnosel'ski\u{i}-Mann (KM) iteration of nonexpansive operators, where we improve the known big-$O$ rate to little-$o$. We show the tightness of this result and improve it in several special cases. In the second part of this paper, we use the convergence rates derived for the KM iteration to analyze the \emph{objective error} convergence rates for the Douglas-Rachford (DRS), Peaceman-Rachford (PRS), and ADMM splitting algorithms under general convexity assumptions.  We show, by way of example, that the rates obtained for these algorithms are tight in all cases and obtain the surprising statement: The DRS algorithm is nearly as fast as the proximal point algorithm (PPA) in the ergodic sense and nearly as slow as the subgradient method in the nonergodic sense.  Finally, we provide several applications of our result to feasibility problems, model fitting, and distributed optimization. Our analysis is self-contained, and most results are deduced from a basic lemma that derives convergence rates for summable sequences, a simple diagram that decomposes each relaxed PRS iteration, and fundamental inequalities that relate the FPR to objective error.
}

\keywords{Krasnosel'ski\u{i}-Mann algorithm \and Douglas-Rachford Splitting \and Peaceman-Rachford Splitting \and Alternating Direction Method of Multipliers \and nonexpansive operator \and averaged operator \and fixed-point algorithm \and little-o convergence}
\subclass{47H05 \and 65K05 \and 65K15 \and 90C25}

\section{Introduction}

Operator-splitting and alternating-direction methods have a long history, and they have been, and still are, some of the most useful methods in scientific computing. These algorithms solve problems composed of several competing structures, such as finding a point in the intersection of two sets, minimizing the sum of two functions, and, more generally, finding a zero of the sum of two monotone operators. They give rise to algorithms that are simple to implement and converge quickly in practice. 
Since the 1950s, operator-splitting methods have been applied to solving partial differential equations (PDEs) and feasibility problems. Recently, certain operator-splitting methods such as ADMM (for alternating direction methods of multipliers) \cite{gabay1976dual,GlowinskiADMM} and Split Bregman \cite{goldstein2009split} have  found new applications in (PDE and non-PDE related) image processing, statistical and machine learning, compressive sensing, matrix completion,  finance, and control. They have also been extended to handle distributed and decentralized optimization (see \cite{boyd2011distributed,shi2013linear,wei2012distributed}).

In convex optimization, operator-splitting methods split constraint sets and objective functions  into   subproblems that are easier to solve than the original problem. Throughout this paper, we will consider two prototype optimization problems: We analyze the unconstrained problem
\begin{align}\label{eq:simplesplit}
\Min_{x\in \cH}~ f(x) + g(x) 
\end{align}
where $\cH$ is a Hilbert space. In addition, we analyze the linearly constrained variant
\begin{align*}
\Min_{x \in \cH_1,~ y \in \cH_2} & \; f(x) + g(y) \\
\St~ &  \; Ax + By = b \numberthis \label{eq:simplelinearconstrained}
\end{align*}
where $\cH_1, \cH_2$, and $\cG$ are Hilbert spaces, the vector $b$ is an element of $\cG$, and $A : \cH_1 \rightarrow \cG$ and $B : \cH_2 \rightarrow \cG$ are bounded linear operators. Our working assumption throughout the paper is that the subproblems involving $f$ and $g$ separately are much simpler to solve than the joint minimization problem.

Problem~\eqref{eq:simplesplit} is often used to model tasks in signal recovery that enforce prior knowledge of the form of the solution, such as sparsity, low rank, and smoothness \cite{combettes2011proximal}.  The knowledge-enforcing function, also known as the regularizer, often has properties that make it difficult to jointly optimize with the remaining parts of the problem. Therefore, operator-splitting methods become the natural choice. 

Problem~\eqref{eq:simplelinearconstrained} is often used to model tasks in machine learning, image processing, and distributed optimization. For special choices of $A$ and $B$, operator-splitting schemes naturally give rise to algorithms with parallel or distributed implementations \cite{bertsekas1989parallel,boyd2011distributed}. Because of the flexibility of operator-splitting algorithms, they have become a standard tool that addresses the emerging need for computational approaches to analyze a massive amount of data in a fast, parallel, distributed, or even real-time manner. 
\subsection{Goals, challenges, and approaches}
This work seeks to improve the theoretical understanding of the most well-known operator-splitting algorithms including Peaceman-Rachford splitting (PRS), Douglas-Rachford splitting (DRS), the alternating direction method of multipliers (ADMM), as well as their relaxed versions. (The proximal point algorithm (PPA) and forward-backward splitting (FBS) are covered in some limited aspects too.) When applied to convex optimization problems, they  are known to converge under rather general conditions.  However, their  \emph{objective error} convergence rates are largely unknown with only a few exceptions \cite{beck2009fast,wei2012distributed,boct2013convergence,chambolle2011first,ShefiTeboulle2014}. {Among the convergence rates known in the literature,  many are given in terms of {quantities that do not immediately imply the objective error rates}, such as the \emph{fixed-point residual} (FPR)~\cite{he2012non,cominetti2012rate,liang2014convergence} (the squared distance between two consecutive iterates). }
{Furthermore, almost all (with the exception of~\cite{beck2009fast} {and a part of \cite{ShefiTeboulle2014}}) analyze the objective error or variational inequalities evaluated at the \textit{time-averaged}, or \textit{ergodic}, iterate, rather than the last, or \textit{nonergodic}, iterate generated by the algorithm~{\color{blue}\cite{chambolle2011first,boct2013convergence,monteiro2013iteration,MSgeneral,MSFBF}}. In applications, the nonergodic iterate tends to share structural properties with the solution of the problem, such as sparsity in $\ell_1$ minimization or low-rankness in nuclear norm minimization. In contrast, the ergodic iterates tend to ``average out" these properties in the sense that the average of many sparse vectors can be dense. Thus, part of the purpose of this work is to illustrate the theoretical differences between these two iterates and to justify the use of nonergodic iterates in practice.} 

Unlike the well-developed complexity estimates for (sub)gradient methods \cite{nemirovskyproblem}, there are no known \emph{lower complexity} results for most \emph{strictly primal} or \emph{strictly dual }splitting algorithms. (As an exception, lower complexity results 
are known for the \emph{primal-dual} case \cite{chen2013optimal}.) {\color{blue} In particular, the classical complexity analysis given in \cite{nemirovskyproblem,nesterov2004introductory} does not apply to the algorithms addressed in this paper.} This paper attempts to close this gap.  The convergence rates in terms of FPR and objective errors are derived for operator-splitting algorithms applied to Problem \eqref{eq:simplesplit}. Convergence rates for constraint violations, the primal objective error, and the dual objective error are derived for ADMM, which applies to Problem  \eqref{eq:simplelinearconstrained}. Some of the derived rates are convenient to use, for example, to determine how many iterations are needed to reach a certain accuracy,  to decide when to stop an  algorithm, and to compare an algorithm to others in terms of their worst-case complexities. 

The techniques we develop in this paper are quite different from those used in classical optimization convergence analysis, mainly because splitting algorithms are driven by  fixed-point operators instead of driven by minimizing objectives. (An exception is FBS, which is driven by both.)  Splitting algorithms are fixed-point iterations derived from certain optimality conditions, and they converge due to the contraction of the  fixed-point operators. Some of them do not even  reduce objectives monotonically. Thus, objective convergence is a consequence of operator convergence rather than the cause of it. Therefore, we first perform an operator-theoretic analysis and then, based on these results, derive optimization related rates.

We now describe our contributions and our techniques as follows: 
\begin{itemize}
\item We show that the FPR of the fixed-point iterations of nonexpansive operators converge with rate $o(1/(k+1))$ (Theorem~\ref{prop:averagedconvergence}). This rate is optimal and improves on the known big-$O$ rate \cite{cominetti2012rate,liang2014convergence}. For the special cases of FBS applied to Problem \eqref{eq:simplesplit} and one-dimensional DRS,  we improve this rate to $o(1/(k+1)^2)$. In addition, we provide examples (Section~\ref{sec:optimalFPR}) to show that all of these rates are tight.  Specifically, for each rate $o(1/(k+1)^p)$, we give an example with rate $\Omega(1/(k+1)^{p+\varepsilon})$ for any $\varepsilon >0$. A detailed list of our contributions and a comparison with existing results appear in Section~\ref{sec:optimalitydisc}. The analysis is based on establishing summable and, in many cases, monotonic sequences, whose  convergence rates are summarized in Lemma \ref{lem:sumsequence}.

\item We demonstrate that even when the DRS algorithm converges in norm to a solution, it may do so \emph{arbitrarily slowly} (Theorem~\ref{thm:arbitrarilyslow}).

\item We give the objective convergence rates of the relaxed PRS algorithm and show that it is, in the worst case, nearly \emph{as slow as the subgradient method} (Theorems~\ref{thm:drsnonergodic} and~\ref{thm:DRSobjectiveslow}), yet \emph{nearly as fast as PPA} in the ergodic sense (Theorems~\ref{thm:drsergodic} and~\ref{thm:ppaslow}). The rates are obtained by relating  the objective error to  the aforementioned \emph{FPR rates} through a \emph{fundamental inequality} (Proposition~\ref{prop:DRSupper}).  {\color{blue} Note that we prove ergodic and nonergodic convergence rates and show that these rates are} sharp through several examples (Section~\ref{sec:PRSergodicoptimality}).
\item We give the convergence rates of the primal objective and feasibility of the current iterates generated by ADMM. Our analysis follows by a simple application of the Fenchel-Young inequality (Section~\ref{sec:DRSADMM})
\end{itemize}

\subsection{Notation}
In what follows,   $\cH, \cH_1, \cH_2, \cG$ denote (possibly infinite dimensional) Hilbert spaces. In fixed-point iterations, $(\lambda_j)_{j \geq 0} \subset \vR_+$ will denote a sequence of relaxation parameters and $$\Lambda_k := \sum_{i=0}^k \lambda_i$$ is its $k$th partial sum. To ease notational memory, the reader may assume that $\lambda_k \equiv (1/2)$ and $\Lambda_k =(k+1)/2$ in the DRS algorithm or that $\lambda_k \equiv 1$ and $\Lambda_k = (k+1)$ in the PRS algorithm. Given the sequence  $(x^j)_{j \geq 0}\subset \cH$, we let $\overline{x}^k = ({1}/{\Lambda_k})\sum_{i=0}^k \lambda_i x^i$ denote its $k$th average  with respect to the sequence $(\lambda_j)_{j \geq 0}$.  

We call a convergence result \emph{ergodic} if it is in terms of the sequence $(\overline{x}^j)_{j \geq 0}$, and \emph{nonergodic} if it is in terms of $(x^j)_{j \geq 0}$.

Given a closed, proper, convex function $f : \cH \rightarrow (-\infty, \infty]$, $\partial f(x)$ denotes its subdifferential at $x$ and
\begin{align}\label{eq:tnabla}
\tnabla f(x) \in \partial f(x),
\end{align}
denotes a subgradient, and the actual choice of the subgradient $\tnabla f(x)$ will always be clear from the context. (This notation was used in \cite[Eq. (1.10)]{bertsekas2011incremental}.) 

The convex conjugate of a proper, closed, and convex function $f$ is
\begin{align}
f^\ast(y) &:= \sup_{x \in \cH} \dotp{y, x} - f(x).
\end{align}
Let $I_{\cH}$ denote the identity map. Finally, for any $x \in \cH$ and scalar $\gamma \in \vR_{++}$, we let 
\begin{align}\label{eq:prox}
\prox_{\gamma f}(x) := \argmin_{y \in \cH} f(y) + \frac{1}{2\gamma} \|y - x\|^2 \quad \mathrm{and} \quad \refl_{\gamma f} := 2\prox_{\gamma f} - I_{\cH},
\end{align}
which are known as the \emph{proximal} and \emph{reflection} operators, and we define the PRS operator:
\begin{align}
\TPRS &:= \refl_{\gamma f} \circ \refl_{\gamma g}.
\end{align}
\subsection{Assumptions}

We list the the assumptions used throughout this papers as follows.

\begin{assump}
Every function we consider is closed, proper, and convex. 
\end{assump}
Unless otherwise stated, a function is not necessarily differentiable. 

\begin{assump}[Differentiability]
Every differentiable function we consider is Fr{\'e}chet differentiable \cite[Definition 2.45]{bauschke2011convex}.
\end{assump}

\begin{assump}[Solution existence]\label{assump:additivesub}
Functions $f, g : \cH \rightarrow (-\infty, \infty]$ satisfy
\begin{align}
\zer(\partial f + \partial g) \neq \emptyset.
\end{align}
\end{assump}
Note that this assumption is slightly stronger than the existence of a minimizer, because $\zer(\partial f + \partial g) \neq \zer(\partial (f + g))$, in general \cite[Remark 16.7]{bauschke2011convex}. Nevertheless, this assumption is standard.

\subsection{The Algorithms}

This paper covers several operator-splitting algorithms that are all based on the atomic evaluation of the \emph{proximal} and \emph{gradient} operators. By default, all algorithms start from an arbitrary $z^0 \in \cH$. To minimize a function $f$,  the \emph{proximal point algorithm} (PPA) iteratively applies the proximal operator of $f$ as follows:
\begin{equation}\label{ppaitr}
z^{k+1} = \prox_{\gamma f}(z^k),\quad k=0,1,\ldots
\end{equation}
where $\gamma>0$ is a tuning parameter. 
Another equivalent form of the iteration, which is often used in this paper,  is  
\begin{equation}\label{eq:backward}
z^{k+1} = z^k - \gamma \tnabla f(z^{k+1})
\end{equation} {where $\tnabla f(z^{k+1}) := (1/\gamma)(z^k - z^{k+1}) \in\partial f(z^{k+1})$.} Given $z^k$,  the point $z^{k+1}$ is unique and so is the subgradient $\tnabla f(z^{k+1})$ (Lemma~\ref{lem:optimalityofprox}). The iteration resembles the (sub)gradient descent iteration, which uses a (sub)gradient of $f$ at $z^k$ instead of its (sub)gradient at $z^{k+1}$.   

{In the literature, \eqref{eq:backward} is referred to as the \emph{backward} iteration, where the (sub)gradient is drawn at the destination $z^{k+1}$. On the contrary, a \emph{forward} iteration draws the (sub)gradient at the start $z^k$, resulting in the update rule: $z^{k+1} = z^k - \gamma \tnabla f(z^{k})$ for an arbitrary $\tnabla f(z^k) \in \partial f(z^k)$.} Most of the splitting schemes in this paper are built from forward, backward, and reflection operators.

In problem \eqref{eq:simplesplit}, let $g$ be a $C^1$ function with Lipschitz derivative. The \emph{forward-backward splitting} (FBS) algorithm is the iteration:
\begin{align}\label{eq:FBSiterates}
z^{k+1} = \prox_{\gamma f}(z^k - \gamma \nabla g(z^k)), \quad k = 0, 1, \ldots.
\end{align}
The FBS algorithm directly generalizes PPA and has the following subgradient representation:
\begin{align}\label{eq:FBSiterates1}
z^{k+1} = z^k - \gamma \tnabla f(z^{k+1}) - \gamma \nabla g(z^k)
\end{align}
{where $\tnabla f(z^{k+1}) := (1/\gamma)(z^k - z^{k+1} - \gamma \nabla g(z^k)) \in \partial f(z^{k+1})$, and  $z^{k+1}$ and $\tnabla f(z^{k+1})$ are unique  (Lemma~\ref{lem:optimalityofprox}) given  $z^k$ and $\gamma>0$. }

A direct application of the PPA algorithm \eqref{ppaitr} to minimizing $f+g$ would require computing the operator  $\prox_{\gamma(f+g)}$, which can be difficult to evaluate. The Douglas-Rachford splitting (DRS) algorithm eliminates this difficulty by separately evaluating the proximal operators of $f$ and $g$ as follows: 
\begin{align*}
\begin{cases}
x_g^k = \prox_{\gamma g}(z^k);\\
x_f^k = \prox_{\gamma f}( 2 x_g^k - z^k);\\
z^{k+1} = z^k +  (x_f^k - x_g^k).
\end{cases}
\quad k = 0, 1, \ldots,
\end{align*}
which has the equivalent operator-theoretic and subgradient form (Lemma~\ref{prop:DRSmainidentity}):
\begin{align*}
z^{k+1} &= \frac{1}{2}(I_{\cH} + \TPRS)(z^k) = z^k - \gamma(\tnabla f(x_f^k) + \tnabla g(x_g^k)), \quad k = 0, 1, \ldots
\end{align*}
{where $\tnabla f(x_f^k)\in\partial f(x_f^k)$ and  $\tnabla g(x_g^k)\in \partial g(x_g^k)$ (see Lemma~\ref{prop:DRSmainidentity} for their precise definitions)}. In the above algorithm, we can replace the $1/2$ average of $I_{\cH}$ and $\TPRS$ with any other weight, so in this paper we study the \emph{relaxed PRS} algorithm:

\begin{algorithm}[H]
\SetKwInOut{Input}{input}\SetKwInOut{Output}{output}
\SetKwComment{Comment}{}{}
\BlankLine
\Input{$z^0 \in \cH, ~\gamma > 0, ~(\lambda_j)_{j \geq 0}\subseteq  (0, 1]$}
\For{$k=0,~1,\ldots$}{
$z^{k+1} = (1-\lambda_k)z^k +   \lambda_k\refl_{\gamma f} \circ \refl_{\gamma g}(z^k) $\;}
\caption{{Relaxed Peaceman-Rachford splitting (relaxed PRS)}}
\label{alg:DRS}
\end{algorithm}
The special cases $\lambda_k \equiv 1/2$ and $\lambda_k \equiv 1$ are called the DRS and PRS algorithms, respectively.

The relaxed PRS algorithm can be applied to problem \eqref{eq:simplelinearconstrained}.  To this end we define the Lagrangian:
$$\cL_\gamma(x,y;w):=f(x)+g(y)-\dotp{w,Ax+By-b} + \frac{\gamma}{2}\|Ax + By - b\|^2.$$
Section~\ref{sec:DRSADMM} presents Algorithm \ref{alg:DRS} applied to the Lagrange dual of ~\eqref{eq:simplelinearconstrained}, which  reduces to the following algorithm:

\begin{algorithm}[H]
\SetKwInOut{Input}{input}\SetKwInOut{Output}{output}
\SetKwComment{Comment}{}{}
\BlankLine
\Input{$w^{-1} \in \cH, x^{-1} = 0, y^{-1} = 0, \lambda_{-1} = {1}/{2}$, $\gamma > 0, (\lambda_j)_{j \geq 0} \subseteq (0, 1]$}

\For{$k=-1,~0,\ldots$}{
$y^{k+1} = \argmin_{y} \cL_\gamma (x^k,y;w^k) + \gamma(2\lambda_k - 1) \dotp{By,  (Ax^{k} + By^{k} -b)}$\;
$w^{k+1} = w^{k} - \gamma (Ax^{k} +  By^{k+1} - b) - \gamma(2\lambda_k - 1)(Ax^{k} + By^{k} - b)$\;
$x^{k+1} = \argmin_{x} \cL_\gamma(x,y^{k+1};w^{k+1})$\;}
\caption{{Relaxed alternating direction method of multipliers (relaxed ADMM)}}
\label{alg:ADMM}
\end{algorithm}
If $\lambda_k\equiv 1/2$, Algorithm \ref{alg:ADMM} recovers the standard ADMM.

Each of the above algorithms is a special case of the Krasnosel'ski\u{i}-Mann (KM) iteration~\cite{krasnosel1955two,mann1953mean}. {An \textit{averaged operator} is the average of a nonexpansive operator $T : \cH \rightarrow \cH$ and the identity mapping $I_\cH$. In other words, for all $\lambda \in (0, 1)$, the operator
\begin{equation}\label{eq:taveraged}
T_\lambda := (1-\lambda) I_{\cH} + \lambda T
\end{equation} 
is called $\lambda$-\textit{averaged} and every $\lambda$-averaged operator is exactly of the form $T_\lambda$ for some nonexpansive map $T$.}

Given a nonexpansive map $T$, the fixed-point iteration of the map $T_\lambda$ is called the KM algorithm:

\begin{algorithm}[H]
\SetKwInOut{Input}{input}\SetKwInOut{Output}{output}
\SetKwComment{Comment}{}{}
\BlankLine
\Input{$z^0 \in \cH, (\lambda_j)_{j \geq 0} \subseteq (0, 1]$}
\For{$k=0,~1,\ldots$}{ $z^{k+1} = T_{\lambda_k}(z^k)$\;}
\caption{{Krasnosel'ski\u{i}-Mann (KM)}}
\label{alg:KM}
\end{algorithm}

\subsection{Basic properties of averaged operators}\label{sec:nonexpansive}
This section describes the basic properties of proximal, reflection, nonexpansive, and averaged operators.  We demonstrate that proximal  and reflection operators are nonexpansive maps, and that averaged operators have a  contractive property. These properties are included in textbooks such as \cite{bauschke2011convex}.

\begin{lemma}[Optimality conditions of $\prox$]\label{lem:optimalityofprox}
Let $x \in \cH$.  Then $x^+ = \prox_{\gamma f} (x)$ if, and only if, $({1}/{\gamma})(x-x^+) \in \partial f(x^+)$.
\end{lemma}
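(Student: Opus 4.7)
The plan is to derive the characterization from Fermat's rule applied to the strongly convex objective defining $\prox_{\gamma f}$. By definition,
\begin{equation*}
\prox_{\gamma f}(x) = \argmin_{y \in \cH} F(y), \qquad F(y) := f(y) + \frac{1}{2\gamma}\|y - x\|^2.
\end{equation*}
Since $f$ is closed, proper, and convex and the quadratic term is $(1/\gamma)$-strongly convex, $F$ is strongly convex and coercive, so it admits a unique minimizer, which justifies that $\prox_{\gamma f}(x)$ is well-defined as a single point. Then I would invoke Fermat's rule: a point $x^+$ is the minimizer if and only if $0 \in \partial F(x^+)$.

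Next I would compute $\partial F$ using the subdifferential sum rule. The quadratic $q(y) = (1/(2\gamma))\|y - x\|^2$ is finite-valued and Fr\'echet differentiable everywhere with $\nabla q(y) = (1/\gamma)(y - x)$, so Moreau--Rockafellar's sum rule (no qualification issue because $\dom q = \cH$) gives
\begin{equation*}
\partial F(y) = \partial f(y) + \frac{1}{\gamma}(y - x).
\end{equation*}
Combining with Fermat's rule, $x^+ = \prox_{\gamma f}(x)$ is equivalent to $0 \in \partial f(x^+) + (1/\gamma)(x^+ - x)$, which rearranges to the claimed inclusion $(1/\gamma)(x - x^+) \in \partial f(x^+)$.

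There is no real obstacle here; the content is entirely standard. The only point that deserves a sentence of care is the applicability of the subdifferential sum rule, which is immediate because $q$ is continuous on all of $\cH$, and the initial well-definedness of $\prox_{\gamma f}(x)$, which follows from strong convexity plus lower semicontinuity of $F$. I would present the equivalences as a short chain of iff's with the sum rule cited from Bauschke--Combettes, so the entire proof is two or three lines.
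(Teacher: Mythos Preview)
Your proposal is correct and complete; it is precisely the standard argument via Fermat's rule and the Moreau--Rockafellar sum rule. The paper itself does not supply a proof for this lemma at all---it states the result as a basic fact and refers the reader to the textbook of Bauschke and Combettes---so your write-up actually adds content rather than merely matching it.
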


It is straightforward to use Lemma~\ref{lem:optimalityofprox} to deduce the firm nonexpansiveness of the proximal operator.

\begin{proposition}[Firm nonexpansiveness of $\prox$]
Let $x,y \in \cH$, let $x^+ := \prox_{\gamma f}(x)$, and let $y^+ := \prox_{\gamma f}(y)$.  Then
\begin{align}\label{eq:proxfirm}
\|x^+ - y^+\|^2 &\leq \dotp{x^+ - y^+, x - y}.
\end{align}
In particular, $\prox_{\gamma f}$ is nonexpansive.
\end{proposition}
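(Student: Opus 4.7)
The plan is to combine the optimality characterization from Lemma~\ref{lem:optimalityofprox} with the monotonicity of the subdifferential of a convex function. First I would invoke Lemma~\ref{lem:optimalityofprox} twice, once at $x$ and once at $y$, to produce the two inclusions
\[
\tfrac{1}{\gamma}(x - x^+) \in \partial f(x^+), \qquad \tfrac{1}{\gamma}(y - y^+) \in \partial f(y^+).
\]
Since $f$ is closed, proper, and convex (Assumption 1), its subdifferential is a monotone operator: for any $(a, u), (b, v) \in \gra(\partial f)$, one has $\dotp{u - v, a - b} \geq 0$. This is the one external fact the argument relies on, and it is completely standard.

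Next I would apply monotonicity to the two subgradient inclusions above with $a = x^+$, $b = y^+$, $u = (1/\gamma)(x - x^+)$, and $v = (1/\gamma)(y - y^+)$. Multiplying through by $\gamma > 0$, this yields
\[
\dotp{(x - x^+) - (y - y^+),\; x^+ - y^+} \geq 0,
\]
which rearranges to
\[
\|x^+ - y^+\|^2 \leq \dotp{x - y,\; x^+ - y^+},
\]
the stated firm nonexpansiveness inequality. For the final claim, I would apply the Cauchy--Schwarz inequality to the right-hand side, obtaining $\|x^+ - y^+\|^2 \leq \|x - y\|\,\|x^+ - y^+\|$, and then divide by $\|x^+ - y^+\|$ (with the trivial case $x^+ = y^+$ handled separately) to conclude $\|x^+ - y^+\| \leq \|x - y\|$, i.e., nonexpansiveness of $\prox_{\gamma f}$.

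There is no real obstacle here: the proof is a three-line deduction, and the only substantive ingredient beyond Lemma~\ref{lem:optimalityofprox} is the monotonicity of $\partial f$, which follows immediately from the convexity of $f$ by adding the two defining subgradient inequalities $f(b) \geq f(a) + \dotp{u, b - a}$ and $f(a) \geq f(b) + \dotp{v, a - b}$. I would mention this derivation in one sentence if the paper has not cited monotonicity of $\partial f$ elsewhere; otherwise I would simply cite it and move on.
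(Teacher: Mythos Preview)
Your proposal is correct and matches the paper's intended approach: the paper does not write out a proof but simply states beforehand that ``It is straightforward to use Lemma~\ref{lem:optimalityofprox} to deduce the firm nonexpansiveness of the proximal operator,'' which is exactly the route you take via monotonicity of $\partial f$.
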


The next proposition introduces the most important operator in this paper.

\begin{proposition}[Nonexpansiveness of the PRS operator]\label{prop:TPRS}
The operator $\refl_{\gamma f} : \cH \rightarrow \cH$ is nonexpansive. Therefore, the composition is nonexpansive:
\begin{align}\label{prop:TPRS:eq:main}
\TPRS &:= \refl_{\gamma f} \circ \refl_{\gamma g}
\end{align}
\end{proposition}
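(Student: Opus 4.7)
The plan is to derive both claims directly from the firm nonexpansiveness of $\prox_{\gamma f}$ established in the preceding proposition. Nonexpansiveness of a composition follows trivially from nonexpansiveness of each factor, so the only real content is to prove that $\refl_{\gamma f} = 2\prox_{\gamma f} - I_\cH$ is nonexpansive; then applying this to both $f$ and $g$ and composing gives the $T_{\mathrm{PRS}}$ statement for free.

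First I would pick arbitrary $x, y \in \cH$ and set $x^+ := \prox_{\gamma f}(x)$ and $y^+ := \prox_{\gamma f}(y)$, so that $\refl_{\gamma f}(x) - \refl_{\gamma f}(y) = 2(x^+ - y^+) - (x - y)$. Expanding the squared norm gives
\begin{align*}
\|\refl_{\gamma f}(x) - \refl_{\gamma f}(y)\|^2 = 4\|x^+ - y^+\|^2 - 4\dotp{x^+ - y^+, x - y} + \|x - y\|^2.
\end{align*}
The firm nonexpansiveness inequality \eqref{eq:proxfirm} says exactly that $\|x^+ - y^+\|^2 - \dotp{x^+ - y^+, x - y} \leq 0$, so the first two terms on the right-hand side are nonpositive, leaving $\|\refl_{\gamma f}(x) - \refl_{\gamma f}(y)\|^2 \leq \|x - y\|^2$. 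This is the desired nonexpansiveness of $\refl_{\gamma f}$.

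For the composition, I would simply note that the same argument applies to $\refl_{\gamma g}$, so that for any $x, y \in \cH$,
\begin{align*}
\|\TPRS(x) - \TPRS(y)\| &= \|\refl_{\gamma f}(\refl_{\gamma g}(x)) - \refl_{\gamma f}(\refl_{\gamma g}(y))\| \\
&\leq \|\refl_{\gamma g}(x) - \refl_{\gamma g}(y)\| \leq \|x - y\|,
\end{align*}
which completes the proof.

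There is no real obstacle here; the only place one could slip is in the algebraic expansion, where the cross term must have the correct sign so that firm nonexpansiveness yields a nonpositive contribution. Writing $\refl_{\gamma f}$ as $2\prox_{\gamma f} - I_\cH$ (rather than some other rearrangement) makes this sign transparent, and Proposition \ref{eq:proxfirm} does the rest.
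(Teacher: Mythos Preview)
Your proof is correct. The paper itself omits the proof of this proposition, treating it as a standard textbook fact (the section explicitly notes that ``these properties are included in textbooks such as \cite{bauschke2011convex}''); your argument---expanding $\|2(x^+-y^+)-(x-y)\|^2$ and invoking the firm nonexpansiveness inequality~\eqref{eq:proxfirm}---is precisely the standard derivation one would find in such a reference, and the composition step is immediate.
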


The next proposition shows that averaged operators have a nice contraction property.
\begin{proposition}[Contraction property of averaged operator]\label{prop:averagedcontraction}
Let $T : \cH \rightarrow \cH$ be a nonexpansive operator. Then for all $\lambda \in (0, 1]$ and $(x, y) \in \cH \times \cH$, the averaged operator $T_{\lambda}$ defined in \eqref{eq:taveraged} satisfies
\begin{align}\label{eq:avgdecrease}
\|T_{\lambda} x - T_{\lambda} y\|^2 &\leq \|x - y\|^2 - \frac{1-\lambda}{\lambda} \|(I_{\cH} - T_{\lambda})x - (I_{\cH} - T_{\lambda})y\|^2.
\end{align}
\end{proposition}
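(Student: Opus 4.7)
The plan is to reduce the bound to the classical convexity-style identity for the squared norm of a convex combination, namely
\[
\|(1-\lambda)a + \lambda b\|^2 = (1-\lambda)\|a\|^2 + \lambda\|b\|^2 - \lambda(1-\lambda)\|a - b\|^2,
\]
applied with $a := x - y$ and $b := Tx - Ty$. Since $T_\lambda x - T_\lambda y = (1-\lambda)(x-y) + \lambda(Tx - Ty)$, this identity gives an exact expression for the left-hand side of \eqref{eq:avgdecrease} in terms of $\|x-y\|^2$, $\|Tx - Ty\|^2$, and $\|(x-y) - (Tx - Ty)\|^2$.

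Next I would use the nonexpansiveness of $T$ to bound $\|Tx - Ty\|^2 \leq \|x - y\|^2$, collapsing the first two terms into $\|x - y\|^2$ with no leftover positive coefficient, so that
\[
\|T_\lambda x - T_\lambda y\|^2 \leq \|x - y\|^2 - \lambda(1-\lambda)\|(x-y) - (Tx - Ty)\|^2.
\]

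The last step is a simple algebraic identification. Since $I_\cH - T_\lambda = \lambda(I_\cH - T)$, we have $(I_\cH - T_\lambda)x - (I_\cH - T_\lambda)y = \lambda\bigl((x-y) - (Tx - Ty)\bigr)$, so the squared norm that appeared above equals $\lambda^{-2}\|(I_\cH - T_\lambda)x - (I_\cH - T_\lambda)y\|^2$. Substituting yields the factor $\lambda(1-\lambda)/\lambda^2 = (1-\lambda)/\lambda$, which matches \eqref{eq:avgdecrease}. The case $\lambda = 1$ is degenerate and handled separately: the right-hand side of \eqref{eq:avgdecrease} then reads $\|x-y\|^2$ (the second term vanishes with the convention $0/1 = 0$), and the inequality is simply the nonexpansiveness of $T = T_1$.

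There is no real obstacle here; the only thing to watch for is that the identity in the first step requires only the inner-product structure of $\cH$, so it holds in any Hilbert space, and that the factor $\lambda^{-2}$ produced by pulling $\lambda$ out of $I_\cH - T_\lambda$ correctly combines with $\lambda(1-\lambda)$ to give $(1-\lambda)/\lambda$ rather than $(1-\lambda)\lambda$.
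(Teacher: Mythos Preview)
Your proof is correct and is precisely the standard argument. The paper does not supply its own proof of this proposition; it defers to the textbook \cite{bauschke2011convex}, where exactly this approach (the convex-combination identity for squared norms together with $I_\cH - T_\lambda = \lambda(I_\cH - T)$) is used.
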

{Note that an operator $N : \cH \rightarrow \cH$ satisfies the 
property in Equation~\eqref{eq:avgdecrease} (with $N$ in place of $T_{\lambda}$) if, and only if, it is $\lambda$-averaged.} If $ \lambda = 1/2$, then $T_\lambda$ is called firmly nonexpansive. Rearranging Equation~\eqref{eq:avgdecrease} shows that a nonexpansive operator $T$ is firmly nonexpansive, if, and only if, for all $x, y \in \cH$, the inequality holds: $$\|Tx - Ty\|^2 \leq \dotp{Tx - Ty, x-y}.$$

%

The next corollary applies Proposition~\ref{prop:averagedcontraction} to $\prox_{\gamma f}$.

\begin{corollary}[Proximal operators are ${1}/{2}$-averaged]\label{cor:proxcontraction}
The operator $\prox_{\gamma f} :  \cH \rightarrow \cH$ is ${1}/{2}$-averaged and satisfies the following contraction property:
\begin{align}\label{cor:proxcontraction:eq:main}
\|\prox_{\gamma f}(x) - \prox_{\gamma f}(y) \|^2 &\leq \|x - y\|^2 - \|(x - \prox_{\gamma f}(x)) - (y - \prox_{\gamma f}(y))\|^2.
\end{align}
\end{corollary}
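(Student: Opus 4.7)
The plan is to derive the corollary as a direct consequence of the two preceding propositions: firm nonexpansiveness of $\prox_{\gamma f}$ (giving $\|x^+ - y^+\|^2 \le \langle x^+-y^+,x-y\rangle$) and the contraction property of averaged operators (Proposition~\ref{prop:averagedcontraction}) specialized to $\lambda = 1/2$. There are two equally short routes; I would present whichever fits the surrounding narrative, but I sketch both.

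\emph{Route 1 (via averagedness).} First I would verify that $\prox_{\gamma f}$ is $\tfrac{1}{2}$-averaged. By the very definition of the reflection operator,
\begin{equation*}
\prox_{\gamma f} = \tfrac{1}{2} I_{\cH} + \tfrac{1}{2}\,\refl_{\gamma f},
\end{equation*}
and Proposition~\ref{prop:TPRS} tells us that $\refl_{\gamma f}$ is nonexpansive. Hence $\prox_{\gamma f}$ is of the form \eqref{eq:taveraged} with $\lambda = 1/2$ and $T = \refl_{\gamma f}$, so it is $1/2$-averaged. Substituting $T_\lambda = \prox_{\gamma f}$ and $\lambda = 1/2$ into \eqref{eq:avgdecrease} makes the factor $(1-\lambda)/\lambda$ equal to $1$, which yields precisely \eqref{cor:proxcontraction:eq:main}.

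\emph{Route 2 (direct algebraic identity).} Alternatively, I would write $x^+ := \prox_{\gamma f}(x)$ and $y^+ := \prox_{\gamma f}(y)$ and use the polarization identity
\begin{equation*}
\|(x - x^+) - (y - y^+)\|^2 = \|x - y\|^2 - 2\langle x - y, x^+ - y^+\rangle + \|x^+ - y^+\|^2.
\end{equation*}
Rearranging gives
\begin{equation*}
\|x - y\|^2 - \|(x - x^+) - (y - y^+)\|^2 = 2\langle x - y, x^+ - y^+\rangle - \|x^+ - y^+\|^2,
\end{equation*}
and applying the firm nonexpansiveness inequality \eqref{eq:proxfirm}, namely $\langle x - y, x^+ - y^+\rangle \ge \|x^+ - y^+\|^2$, shows that this right-hand side is bounded below by $\|x^+ - y^+\|^2$, which is \eqref{cor:proxcontraction:eq:main}.

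There is essentially no obstacle here: the corollary is a one-line specialization of the previous results, and the only thing to be careful about is the constant $(1-\lambda)/\lambda$ which happens to equal $1$ at $\lambda = 1/2$, so that the subtracted term in \eqref{cor:proxcontraction:eq:main} has coefficient exactly $1$. I would favor Route 1 for its conceptual clarity, since it also makes explicit the fact that $\prox_{\gamma f}$ is $1/2$-averaged — the statement claimed in the corollary's title.
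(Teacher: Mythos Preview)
Your proposal is correct, and Route 1 is exactly the approach the paper intends: the text preceding the corollary states that it ``applies Proposition~\ref{prop:averagedcontraction} to $\prox_{\gamma f}$,'' and the paper gives no further proof. Your Route 2 via the polarization identity and firm nonexpansiveness \eqref{eq:proxfirm} is a valid alternative but is not used in the paper.
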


The following lemma relates the fixed points of $T_{\lambda}$ to those of $T$.
\begin{lemma}\label{lem:fixedpointaverage}
Let $T : \cH \rightarrow \cH$ be nonexpansive and  $\lambda > 0$.  Then, $T_{\lambda}$ and $T$ have the same set of fixed points.
\end{lemma}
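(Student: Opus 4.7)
The plan is to argue by direct computation from the definition $T_{\lambda} = (1-\lambda) I_{\cH} + \lambda T$ given in \eqref{eq:taveraged}, showing the inclusion $\Fix(T) \subseteq \Fix(T_{\lambda})$ and its reverse separately. Nonexpansiveness of $T$ plays no role here; the argument is purely algebraic and uses only $\lambda > 0$.

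For the forward direction, I would take $x \in \Fix(T)$, so $Tx = x$, and plug directly into the definition:
\begin{align*}
T_{\lambda} x = (1-\lambda) x + \lambda T x = (1-\lambda) x + \lambda x = x,
\end{align*}
hence $x \in \Fix(T_{\lambda})$. For the reverse direction, I would take $x \in \Fix(T_{\lambda})$ and rearrange $x = (1-\lambda) x + \lambda T x$ to obtain $\lambda (T x - x) = 0$. Since $\lambda > 0$ by hypothesis, this forces $T x = x$, so $x \in \Fix(T)$.

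There is no real obstacle: the only point worth flagging is that the hypothesis $\lambda > 0$ is exactly what is needed in the second step (the case $\lambda = 0$ would make $T_{\lambda}$ the identity, whose fixed point set is all of $\cH$). Note also that the proposition allows $\lambda > 1$, not just $\lambda \in (0,1]$ as in \eqref{eq:taveraged}, which is consistent since the computation above never uses averaging. This lemma will later justify passing between fixed points of the nonexpansive operator $T$ driving the KM iteration and fixed points of the averaged map $T_{\lambda_k}$ actually iterated by Algorithm~\ref{alg:KM}.
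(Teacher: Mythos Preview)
Your proof is correct. The paper itself does not give a proof of this lemma; it is stated as one of several basic textbook properties in Section~\ref{sec:nonexpansive}, with a reference to \cite{bauschke2011convex}. Your direct algebraic argument is exactly the standard one, and your observation that nonexpansiveness plays no role is accurate.
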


Finally, we note that the forward and forward-backward operators are averaged whenever the implicit stepsize parameter $\gamma$ is small enough. See Section~\ref{sec:fbs} for more details.

\section{Summable sequence convergence lemma}\label{sec:sequence}
This section presents a lemma on the convergence rates of nonnegative summable  sequences. Such sequences are constructed throughout this paper to establish various rates. 


\begin{lemma}[Summable sequence convergence rates]\label{lem:sumsequence}
Suppose that the nonnegative scalar sequences  $(\lambda_j)_{j\geq0}$ and $(a_{j})_{j\geq 0}$ satisfy $\sum_{i=0}^\infty \lambda_ia_i < \infty$.  Let $\Lambda_k := \sum_{i=0}^k \lambda_i$ for $k\geq 0$.
\begin{enumerate}
\item \label{lem:sumsequence:part:main} {\bf Monotonicity:} If $(a_j)_{j \geq0}$ is \emph{monotonically nonincreasing}, then
\begin{align}\label{eq:sumsequence-bigo}
a_{k} \leq \frac{1}{\Lambda_k}\left(\sum_{i=0}^\infty\lambda_i a_i \right) && and && a_{k} = o\left(\frac{1}{\Lambda_{k} - \Lambda_{\ceil{{k}/{2}}}}\right).
\end{align}
In particular,
\begin{enumerate}
\item\label{lem:sumsequence:part:main:a} if $(\lambda_j)_{j\geq 0}$ is bounded away from $0$, then $a_k=o(1/(k+1))$;
\item\label{lem:sumsequence:part:main:b} if $\lambda_k = (k+1)^p$ for some $p\ge 0$ and all $k \geq 1$, then $a_k = o(1/(k+1)^{p+1})$;
\item\label{lem:sumsequence:part:main:c} as a special case, if $\lambda_k = (k+1)$ for all $k \geq 0$, then $a_k = o(1/(k+1)^2)$.
\end{enumerate}
\item {\bf Monotonicity up to errors:} \label{lem:sumsequence:part:e} Let $(e_j)_{j \geq 0}$ be a sequence of scalars. Suppose that $a_{k+1} \leq a_k + e_k$ for all $k$ (where $e_k$ represents an error) and that  $\sum_{i=0}^\infty \Lambda_ie_i < \infty$.  Then
\begin{align}\label{eq:sumsequence-error-bigo}
a_k \leq \frac{1}{\Lambda_k} \left(\sum_{i=0}^\infty \lambda_i a_i + \sum_{i=0}^\infty \Lambda_i e_i\right) && {and} && a_{k} = o\left(\frac{1}{\Lambda_{k} - \Lambda_{\ceil{{k}/{2}}}}\right).
\end{align}
The rates of $a_k$ in Parts \ref{lem:sumsequence:part:main:a}, \ref{lem:sumsequence:part:main:b}, and \ref{lem:sumsequence:part:main:c} continue to hold  as long as $\sum_{i=0}^\infty \Lambda_ie_i < \infty$  holds. In particular, they hold if $e_k = O(1/(k+1)^q)$ for some $q > 2$, $q> p+2$, and $q> 3$, respectively.
\item \label{lem:sumsequence:part:b} {\bf Faster rates:} Suppose $(b_j)_{j\geq0}$ and $(e_j)_{j \geq 0}$ are nonnegative scalar sequences, that $\sum_{i=0}^\infty b_j < \infty$ and $\sum_{i=0}^\infty (i+1)e_i < \infty$, and that for all $k\ge 0$ we have $\lambda_k a_k \leq b_k - b_{k+1} + e_k$. Then the following sum is finite:
\begin{align}
\sum_{i=0}^\infty (i+1)\lambda_ia_i \leq \sum_{i=0}^\infty b_i + \sum_{i=0}^\infty (i+1) e_i< \infty.
\end{align}
In particular, 
\begin{enumerate}
\item\label{lem:sumsequence:part:3a} if $(\lambda_j)_{j\geq 0}$ is bounded away from $0$, then $a_k=o(1/(k+1)^2)$;
\item\label{lem:sumsequence:part:3b} if $\lambda_k = (k+1)^p$ for some $p\ge 0$ and all $k \geq 1$, then $a_k = o(1/(k+1)^{p+2})$.
\end{enumerate}
\item \label{lem:sumsequence:part:nonmono} {\bf No monotonicity:} For all $k \geq 0$, define the sequence of indices
\begin{align*}
k_{\mathrm{best}} &:= \argmin_i\{a_i | i = 0, \cdots, k\}.
\end{align*}
Then $(a_{j_{\mathrm{best}}})_{j \geq 0}$ is monotonically nonincreasing and the above bounds continue to hold when $a_k$ is replaced with $a_{\kbest}$.  
\end{enumerate}
\end{lemma}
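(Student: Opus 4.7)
The plan is to prove the four parts in the order they appear; each later part repackages its hypotheses so that the machinery of Part~\ref{lem:sumsequence:part:main} can be applied. For Part~\ref{lem:sumsequence:part:main}, the big-$O$ bound is immediate from monotonicity: since $a_i \ge a_k$ for $i \le k$, we have $\Lambda_k a_k \le \sum_{i=0}^k \lambda_i a_i \le \sum_{i=0}^\infty \lambda_i a_i$. For the little-$o$ rate, I would restrict to the window $i \in \{\ceil{k/2},\ldots,k\}$ and apply monotonicity only on that window to obtain
\begin{align*}
a_k \bigl(\Lambda_k - \Lambda_{\ceil{k/2}-1}\bigr) \le \sum_{i=\ceil{k/2}}^k \lambda_i a_i,
\end{align*}
whose right-hand side is the tail of a convergent series and hence tends to $0$ as $k\to\infty$. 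Items (a)--(c) are then purely computational: $\lambda_k$ bounded below gives $\Lambda_k - \Lambda_{\ceil{k/2}} = \Theta(k)$, and $\lambda_k = (k+1)^p$ gives $\Lambda_k - \Lambda_{\ceil{k/2}} = \Theta(k^{p+1})$ by comparing the sum with an integral.

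For Part~\ref{lem:sumsequence:part:e}, I would first iterate the inequality $a_{j+1} \le a_j + e_j$ to produce the substitute for monotonicity, namely $a_i \ge a_k - \sum_{l=i}^{k-1} e_l$ for every $i \le k$. Multiplying by $\lambda_i$, summing over $i$ from $0$ to $k$, and swapping the order of the resulting double sum turns it into $\sum_{l=0}^{k-1} \Lambda_l e_l$, which is bounded by hypothesis; rearranging yields the big-$O$ bound. For the little-$o$ version I would redo the same calculation with the outer index restricted to $i \in \{\ceil{k/2},\ldots,k\}$, observing that both the truncated sum on the left and the truncated swap $\sum_{l \ge \ceil{k/2}} \Lambda_l e_l$ vanish as $k \to \infty$. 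The specific decay rates imposed on $e_k$ in the three subcases are exactly what is needed to make $\sum \Lambda_i e_i < \infty$ given the three stipulated growths of $(\lambda_i)$.

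Part~\ref{lem:sumsequence:part:b} is a summation-by-parts argument. Multiplying the hypothesis by $(k+1)$ and summing over $k \in \{0,\ldots,N\}$, the telescoping portion becomes $\sum_{k=0}^N (k+1)(b_k - b_{k+1}) = \sum_{k=0}^N b_k - (N+1) b_{N+1}$, which is bounded by $\sum_{k=0}^\infty b_k$ since $b_{N+1} \ge 0$. Combined with $\sum (i+1) e_i < \infty$, this yields $\sum (i+1) \lambda_i a_i < \infty$. The faster rates in (a) and (b) then follow by applying Part~\ref{lem:sumsequence:part:main} to the reweighted sequence $\lambda'_i := (i+1) \lambda_i$, whose partial sums grow one polynomial order faster than $\Lambda_k$.

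Finally, Part~\ref{lem:sumsequence:part:nonmono} is the shortest: by construction $(a_{j_{\best}})_{j \ge 0}$ is monotonically nonincreasing, and $\lambda_i a_{i_{\best}} \le \lambda_i a_i$, so every summability hypothesis is inherited and Parts~\ref{lem:sumsequence:part:main}--\ref{lem:sumsequence:part:b} apply verbatim with $a_k$ replaced by $a_{\kbest}$. The main obstacle across the proof is the bookkeeping in Part~\ref{lem:sumsequence:part:e}: verifying that restricting to the upper half of indices still makes the swapped double sum a vanishing tail (rather than something that grows with $k$) requires care in the index manipulations, but is otherwise straightforward. Everything else reduces to monotonicity on a window, telescoping via Abel summation, or reapplication of the single estimate established in Part~\ref{lem:sumsequence:part:main}.
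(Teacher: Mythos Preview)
Your proposal is correct and follows essentially the same approach as the paper: monotonicity on the full range for the big-$O$ bound and on the upper half-window for the little-$o$ bound in Part~\ref{lem:sumsequence:part:main}; iterating $a_{k+1}\le a_k+e_k$ and swapping the double sum in Part~\ref{lem:sumsequence:part:e}; Abel summation on $(k+1)(b_k-b_{k+1})$ in Part~\ref{lem:sumsequence:part:b}; and the trivial observation in Part~\ref{lem:sumsequence:part:nonmono}. The paper's telescoping identity in Part~\ref{lem:sumsequence:part:b}, namely $(k+1)(b_k-b_{k+1}) = b_{k+1} + \bigl((k+1)b_k-(k+2)b_{k+1}\bigr)$, is just an alternative way of writing your Abel summation, and your idea of reapplying Part~\ref{lem:sumsequence:part:main} with the reweighted $\lambda'_i=(i+1)\lambda_i$ is exactly what the paper means when it says ``Parts~\ref{lem:sumsequence:part:3a} and~\ref{lem:sumsequence:part:3b} are similar to Part~\ref{lem:sumsequence:part:main:b}.''
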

\begin{proof}

Part \ref{lem:sumsequence:part:main}. Because $a_k\le a_i$, $\forall k\ge i$, and the inequality holds $\lambda_ia_i\ge 0$, we get the upper bound $\Lambda_k a_{k} \leq  \sum_{i=0}^{k} \lambda_i a_i \leq \sum_{i=0}^\infty \lambda_i a_i$. This shows the left part of~(\ref{eq:sumsequence-bigo}). To prove  the right part of~(\ref{eq:sumsequence-bigo}), observe that
\begin{align*}
(\Lambda_{k} - \Lambda_{\ceil{k/2}}) a_{k} &= \sum_{i=\ceil{k/2}}^k \lambda_ia_k\leq \sum_{i=\ceil{k/2}}^k \lambda_ia_i \stackrel{k\rightarrow\infty}{\rightarrow} 0.
\end{align*}

Part \ref{lem:sumsequence:part:main:a}. Let  $\underline{\lambda} := \inf_{j \geq 0}\lambda_j > 0$. For every integer $k \geq 2$, we have $\ceil{k/2} \leq (k+1)/2$. Thus, $\Lambda_k - \Lambda_{\ceil{k/2}} \geq \underline{\lambda}(k - \ceil{k/2}) \geq \underline{\lambda}(k - 1)/2 \geq \underline{\lambda}(k + 1)/6.$ Hence,  $a_k = o(1/(\Lambda_{k} -\Lambda_{\ceil{{k}/{2}}})) = o(1/(k+1))$ follows from \eqref{eq:sumsequence-bigo}.


Part \ref{lem:sumsequence:part:main:b}. For every integer $k \geq 3$, we have $\ceil{k/2} +1 \leq (k+3)/2 \leq 3(k+1)/4$ and $\Lambda_{k} - \Lambda_{\ceil{k/2}} = \sum_{i=\ceil{k/2} + 1}^k \lambda_i = \sum_{i=\ceil{k/2} + 1}^k (i+1)^p  \geq \int_{\ceil{k/2}}^k (t+1)^pdt = (p+1)^{-1}((k+1)^{p+1} - (\ceil{k/2} + 1)^{p+1}) \geq (p+1)^{-1}(1 - (3/4)^{p+1})(k+1)^{p+1}$. Therefore,  $a_k = o(1/(\Lambda_{k} -\Lambda_{\ceil{{k}/{2}}})) = o(1/(k+1)^{p+1})$ follows from \eqref{eq:sumsequence-bigo}.


Part \ref{lem:sumsequence:part:main:c} directly follows from Part \ref{lem:sumsequence:part:main:b}.

Part~\ref{lem:sumsequence:part:e}. 
For every integer $0 \leq i \leq k$, we have $a_{k} \leq a_{i} + \sum_{j=i}^{k-1}e_{j}$. Thus, $\Lambda_k a_{k}=\sum_{i=0}^k\lambda_i a_k\le \sum_{i=0}^k\lambda_i a_i+\sum_{i=0}^k\lambda_i (\sum_{j=i}^{k-1}e_{j}) = \sum_{i=0}^k\lambda_i a_i+\sum_{i=0}^{k-1}e_i(\sum_{j=0}^{i}\lambda_j)=\sum_{i=0}^k\lambda_i a_i+\sum_{i=0}^{k-1}\Lambda_ie_i\le \sum_{i=0}^\infty\lambda_i a_i+\sum_{i=0}^\infty\Lambda_ie_i, $ from which the left part of \eqref{eq:sumsequence-error-bigo} follows. The proof for the right part of~\eqref{eq:sumsequence-error-bigo} is similar to Part \ref{lem:sumsequence:part:main}. The condition $e_k = O(1/(k+1)^q)$ for appropriate $q$ is used to ensure that  $\sum_{i=0}^\infty\Lambda_ie_i<\infty$ for each setting of $\lambda_k$ in the previous Parts \ref{lem:sumsequence:part:main:a}, \ref{lem:sumsequence:part:main:b}, and \ref{lem:sumsequence:part:main:c}.

Part~\ref{lem:sumsequence:part:b}. Note that 
\begin{align*}
\lambda_k(k+1) a_k  \leq (k+1) b_k - (k+1) b_{k+1} + (k+1) e_k = b_{k+1} + ((k+1)b_k - (k+2)b_{k+1}) + (k+1)e_k.
\end{align*}
Thus,  because the upper bound on $(k+1)\lambda_ka_k$ is the sum of a telescoping term and a summable term, we have $\sum_{i=0}^\infty (i+1)\lambda_i a_i\leq \sum_{i=0}^\infty b_i + \sum_{i=0}^\infty(i+1)e_i < \infty$. Parts \ref{lem:sumsequence:part:3a} and \ref{lem:sumsequence:part:3b} are similar to Part  \ref{lem:sumsequence:part:main:b}.

Part \ref{lem:sumsequence:part:nonmono} is straightforward, so we omit its proof.
\qed
\end{proof}

Part~\ref{lem:sumsequence:part:main} of Lemma~\ref{lem:sumsequence} is a generalization of  \cite[Theorem 3.3.1]{knopp1956infinite} and \cite[Lemma 1.2]{deng20131}, which state that a nonnegative, summable, monotonic sequence converges at the rate of $o(1/(k+1))$. This result is key for deducing the convergence rates of several quantities in this paper.



\section{Iterative fixed-point residual analysis}\label{sec:FPR}

In this section we establish the convergence rate of the \emph{fixed-point residual} (FPR), $\|Tz^k - z^k\|^2$, at the $k$th iteration of Algorithm~\ref{alg:KM}.   

The convergence of Algorithm~\ref{alg:KM} is well-studied \cite{combettes2004solving,cominetti2012rate,liang2014convergence}.  In particular, weak convergence of $(z^j)_{j \geq 0}$ to a fixed point of $T$ holds under mild conditions on the sequence $(\lambda_j)_{j \geq 0}$ \cite[Theorem 3.1]{combettes2004solving}.  Because strong convergence of Algorithm~\ref{alg:KM} may fail (in the infinite dimensional setting), the quantity $\|z^k - z^\ast\|$ where $z^\ast$ is a fixed point of $T$ may be bounded above zero for all $k \geq0$.  However, the property $\lim_{k\to\infty}\|Tz^k - z^k\| = 0$, known as \emph{asymptotic regularity} \cite{browder1966solution}, always holds when a fixed point of $T$ exists. Thus, we can always measure the convergence rate of the FPR. 

We measure $\|Tz^k - z^k\|^2$ when we could just as well measure $\|Tz^k - z^k\|$.  We choose to measure the squared norm because it naturally appears in our analysis.  In addition, it is summable and monotonic, which is  analyzable by Lemma~\ref{lem:sumsequence}.

In first-order optimization algorithms, the FPR typically relates to the size of objective gradient.  For example, in the unit-step gradient descent algorithm, $z^{k+1}= z^k - \nabla f(z^k)$, the FPR is given by $\|\nabla f(z^k)\|^2$. In the proximal point algorithm, the FPR is given by $\|\tnabla f(z^{k+1})\|^2$ {where $\tnabla f(z^{k+1}) := (z^k - z^{k+1}) \in \partial f(z^{k+1})$ (see Equation~\eqref{eq:backward})}. When the objective is the sum of multiple functions, the FPR is a combination of the (sub)gradients of those functions in the objective. Using the subgradient inequality, we will derive a rate on $f(z^k)-f(x^*)$ from a  rate on the FPR where $x^\ast$ is a minimizer of $f$.

\subsection{$o(1/(k+1))$ FPR of averaged operators}\label{sec:averaged}

We now  prove the main result of this section. We do not include the known weak convergence result \cite[Theorem 3.1]{combettes2004solving}, but we deduce a convergence rate for the FPR. The  new results in the following theorem are the little-$o$ convergence rates in Equation~\eqref{prop:averagedconvergence:eq:1} and  in Part~\ref{prop:averagedconvergence:part:inexact}; the rest of the results can be found in~\cite[Proof of Proposition 5.14]{bauschke2011convex},~\cite[Proposition 11]{cominetti2012rate}, and~\cite{liang2014convergence}. 

\begin{theorem}[Convergence rate of averaged operators]\label{prop:averagedconvergence}
Let $T : \cH \rightarrow \cH$ be a nonexpansive operator, let $z^\ast$ be a fixed point of $T$, let $(\lambda_j)_{j \geq 0} \subseteq (0, 1]$ be a sequence of positive numbers, let $\tau_k := \lambda_k(1-\lambda_k)$, and let $z^0 \in \cH$.   Suppose that $(z^j)_{j \geq 0}\subseteq\cH$ is generated by Algorithm~\ref{alg:KM}: for all $k \geq 0$, let
\begin{align}\label{eq:KMiteration}
z^{k+1} &= T_{\lambda_k}(z^k),
\end{align}
where $T_\lambda$ is defined in \eqref{eq:taveraged}.
Then, the following results hold 
\begin{enumerate}
\item 
$\|z^k - z^\ast\|^2$ is monotonically nonincreasing; \label{prop:averagedconvergence:eq:mono}
\item 
$\|Tz^k - z^k\|^2$ is monotonically nonincreasing; \label{prop:averagedconvergence:eq:FPRmono}
\item \label{prop:averagedconvergence:eq:summability} $\tau_k \|Tz^{k} - z^{k} \|^2$ is summable:\begin{align}\label{prop:averagedconvergence:eq:sum}
\sum_{i=0}^\infty \tau_i \|Tz^{i} - z^{i} \|^2 \leq \|z^0 - z^\ast\|^2;
\end{align}
\item \label{prop:averagedconvergence:eq:bounds} if $\tau_k>0$ for all $k\geq 0$, then the convergence estimates hold:
\begin{align}\label{prop:averagedconvergence:eq:1}
\|Tz^{k} - z^k\|^2 &\leq \frac{\|z^{0} - z^\ast\|^2}{\sum_{i=0}^k \tau_i} && \mbox{and} && \|Tz^{k}- z^k\|^2 = o\left(\frac{1}{\sum_{i=\ceil{\frac{k}{2}}+1}^k\tau_i}\right).
\end{align}
In particular, if $(\tau_j)_{j \geq 0} \subseteq (\varepsilon, \infty)$  for some $\varepsilon > 0$, then $\|Tz^{k}- z^k\|^2 = o(1/(k+1))$.
\item \label{prop:averagedconvergence:part:inexact} Instead of Iteration~\eqref{eq:KMiteration}, for all $k \geq 0$, let
\beq\label{km_itr_err}
z^{k+1} := T_{\lambda_k}(z^k)  + \lambda_k e^k
\eeq {\color{blue} for an error sequence $(e^j)_{j \geq 0} \subseteq \cH$ that satisfies $\sum_{i=0}^k \lambda_k \|e_k\| < \infty$ and $\sum_{i=0}^\infty (i+1)\lambda_k^2\|e^k\|^2 < \infty$. Note that these bounds hold, for example, when for all $k \geq 0$ $\lambda_k \|e^k\|\le \omega_k$ for a sequence $(\omega_j)_{j\ge 0}$ that is nonnegative, summable, and monotonically nonincreasing.}
Then if $(\tau_j)_{j \geq 0} \subseteq (\varepsilon, \infty)$  for some $\varepsilon > 0$, we continue to have $\|Tz^{k}- z^k\|^2 = o(1/(k+1))$.
\end{enumerate}
\end{theorem}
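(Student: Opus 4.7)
The plan is to derive Parts~\ref{prop:averagedconvergence:eq:mono}--\ref{prop:averagedconvergence:eq:bounds} from a single key recursion and then handle the inexact case by tracking how the error propagates through that recursion. First I apply the averaged-operator contraction property (Proposition~\ref{prop:averagedcontraction}) to $T_{\lambda_k}$ at $x=z^k$, $y=z^\ast$; since $z^\ast$ is a fixed point of $T$, and hence of $T_{\lambda_k}$ by Lemma~\ref{lem:fixedpointaverage}, the term $(I_\cH-T_{\lambda_k})z^\ast$ vanishes, while $(I_\cH-T_{\lambda_k})z^k = \lambda_k(z^k-Tz^k)$ converts the right-hand side into the $\tau_k$-form
\[
\|z^{k+1}-z^\ast\|^2 \leq \|z^k-z^\ast\|^2 - \tau_k\|Tz^k-z^k\|^2.
\]
This immediately yields Part~\ref{prop:averagedconvergence:eq:mono}, and telescoping gives Part~\ref{prop:averagedconvergence:eq:summability}.

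For Part~\ref{prop:averagedconvergence:eq:FPRmono} I decompose $Tz^{k+1}-z^{k+1} = (Tz^{k+1}-Tz^k) + (1-\lambda_k)(Tz^k-z^k)$ using $z^{k+1}-z^k = \lambda_k(Tz^k-z^k)$; the triangle inequality together with nonexpansiveness of $T$ and $\|z^{k+1}-z^k\| = \lambda_k\|Tz^k-z^k\|$ collapses the right-hand side to $\|Tz^k-z^k\|$. Part~\ref{prop:averagedconvergence:eq:bounds} then follows by applying Lemma~\ref{lem:sumsequence}, Part~\ref{lem:sumsequence:part:main}, with $a_k=\|Tz^k-z^k\|^2$ (monotone by Part~\ref{prop:averagedconvergence:eq:FPRmono}) and weights $\tau_k$; the assumption $\tau_k>\varepsilon$ ensures that $\sum_{i=0}^k\tau_i$ grows linearly in $k$, delivering the $o(1/(k+1))$ rate.

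For Part~\ref{prop:averagedconvergence:part:inexact} I set $\tilde z^{k+1} := T_{\lambda_k}z^k$ so that $z^{k+1} = \tilde z^{k+1} + \lambda_k e^k$. From $\|z^{k+1}-z^\ast\|\leq \|z^k-z^\ast\|+\lambda_k\|e^k\|$ and summability of $\lambda_k\|e^k\|$, the sequence $\|z^k-z^\ast\|$ is bounded by some constant $M$. Expanding the square and inserting the exact-case estimate for $\|\tilde z^{k+1}-z^\ast\|^2$ yields the perturbed recursion
\[
\tau_k\|Tz^k-z^k\|^2 \leq \|z^k-z^\ast\|^2 - \|z^{k+1}-z^\ast\|^2 + 2M\lambda_k\|e^k\| + \lambda_k^2\|e^k\|^2,
\]
whose right-hand side is summable under the hypotheses. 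A parallel triangle-inequality argument gives $\|Tz^{k+1}-z^{k+1}\| \leq \|Tz^k-z^k\| + 2\lambda_k\|e^k\|$, so after squaring $\|Tz^k-z^k\|^2$ is ``monotone up to errors,'' and I invoke Lemma~\ref{lem:sumsequence}, Part~\ref{lem:sumsequence:part:e} with weights $\tau_k$ to extract $o(1/(k+1))$. The hard part will be verifying the criterion $\sum_i\Lambda_ie_i<\infty$ of that lemma: with $\Lambda_i\asymp i+1$, controlling the cross term $\lambda_k\|e^k\|\,\|Tz^k-z^k\|$ will require combining the uniform bound $\|Tz^k-z^k\|\leq \|Tz^0-z^0\|+2\sum_{i<k}\lambda_i\|e^i\|$ obtained from the telescoping estimate above with both summability hypotheses on the error sequence; the special case $\lambda_k\|e^k\|\leq\omega_k$ with $\omega_k$ monotone and summable is convenient here, since $k\omega_k\to 0$ forces $\sum(k+1)\lambda_k^2\|e^k\|^2<\infty$ automatically.
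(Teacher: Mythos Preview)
Your treatment of Parts~\ref{prop:averagedconvergence:eq:mono}--\ref{prop:averagedconvergence:eq:bounds} is correct and matches the standard argument the paper cites.

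For Part~\ref{prop:averagedconvergence:part:inexact}, however, your approach has a genuine gap. When you square the triangle-inequality bound $\|Tz^{k+1}-z^{k+1}\|\le \|Tz^k-z^k\|+2\lambda_k\|e^k\|$, the resulting ``error'' in Lemma~\ref{lem:sumsequence}\ref{lem:sumsequence:part:e} contains the cross term $4\lambda_k\|e^k\|\,\|Tz^k-z^k\|$. To invoke the lemma with weights bounded below (so that $\Lambda_k\asymp k+1$) you must show $\sum_k (k+1)\lambda_k\|e^k\|\,\|Tz^k-z^k\|<\infty$. Using only the uniform bound $\|Tz^k-z^k\|\le C$ that you derive, this reduces to $\sum_k (k+1)\lambda_k\|e^k\|<\infty$, which is \emph{strictly stronger} than the stated hypotheses $\sum_k\lambda_k\|e^k\|<\infty$ and $\sum_k(k+1)\lambda_k^2\|e^k\|^2<\infty$. (Take $\lambda_k\|e^k\|=1/\bigl((k+2)\log^2(k+2)\bigr)$: both hypotheses hold, yet $\sum_k(k+1)\lambda_k\|e^k\|=\infty$.) Cauchy--Schwarz or Young-type splittings do not rescue this, because you lack control of $\sum_k(k+1)\|Tz^k-z^k\|^2$ a priori.

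The paper sidesteps the cross term entirely. Writing $p^k:=Tz^k-z^k$, $r^k:=z^{k+1}-z^k$, $q^k:=Tz^{k+1}-Tz^k$, one expands $\|p^{k+1}\|^2=\|p^k\|^2+\|q^k-r^k\|^2+2\langle q^k-r^k,p^k\rangle$, uses nonexpansiveness in the form $2\langle q^k-r^k,r^k\rangle\le -\|q^k-r^k\|^2$ together with $p^k=\lambda_k^{-1}(r^k-\lambda_k e^k)$, and then \emph{completes the square} in $q^k-r^k$ to absorb the remaining $\langle q^k-r^k,e^k\rangle$ term. This yields the clean estimate
\[
\|p^{k+1}\|^2 \le \|p^k\|^2 + \frac{\lambda_k^2}{\tau_k}\|e^k\|^2,
\]
with no cross term. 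Now Lemma~\ref{lem:sumsequence}\ref{lem:sumsequence:part:e} applies directly with $e_k=\lambda_k^2\|e^k\|^2/\tau_k$, and the required condition $\sum_k(k+1)\lambda_k^2\|e^k\|^2/\tau_k<\infty$ follows from the hypothesis since $\tau_k>\varepsilon$. The completion-of-the-square step is the missing idea in your outline.
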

\begin{proof}
As noted before the Theorem, for Parts~\ref{prop:averagedconvergence:eq:mono} through~\ref{prop:averagedconvergence:eq:bounds}, we only need to prove the little-$o$ convergence rate. This follows  from the monotonicity of $(\|Tz^j - z^j\|^2)_{j \geq 0}$, Equation~\eqref{prop:averagedconvergence:eq:sum}, and Part~\ref{lem:sumsequence:part:main} of Lemma~\ref{lem:sumsequence}. 

Part~\ref{prop:averagedconvergence:part:inexact}: 
{\color{blue} We first show that the condition involving the sequence $(w_j)_{j \geq 0}$ is sufficient to guarantee the error bounds.} We have $ \sum_{i=0}^\infty \lambda_i \|e^i\|\le\sum_{i=0}^\infty \omega_i <\infty$ and $\sum_{i=0}^\infty (i+1)\lambda_i^2 \|e^i\|^2\le \sum_{i=0}^\infty (i+1)\omega_i^2<\infty$, where the last inequality is shown as follows. By Part~\ref{lem:sumsequence:part:main} of Lemma~\ref{lem:sumsequence}, we have $\omega_k=o(1/(k+1))$. Therefore, there exists a finite $K$ such that $(k+1)\omega_k<1$ for $k>K$. Therefore, $\sum_{i=0}^\infty (i+1)\omega_i^2 < \sum_{i=0}^K (i+1)\omega_i^2 + \sum_{i=K+1}^\infty \omega_i<\infty$. 

For simplicity, introduce   $p^{k}:=Tz^{k}-z^{k}$,  $p^{k+1}:=Tz^{k+1}-z^{k+1}$, and
$r^k:=z^{k+1} - z^k $.  Then from \eqref{eq:taveraged} and \eqref{km_itr_err}, we have $p^{k}=\frac{1}{\lambda_k}(r^k - \lambda_k e^k)$. Also introduce $q^k := Tz^{k+1}-Tz^k$. Then, $p^{k+1}-p^k=q^k-r^k$. 

We will show: (i)  $\|p^{k+1}\|^2 \leq \|p^k\|^2 + \frac{\lambda_k^2}{\tau_k}\|e^k\|^2$ and (ii)  $\sum_{i=0}^\infty \tau_i\|p^i\|^2 <\infty$. Then, applying Part \ref{lem:sumsequence:part:e} of Lemma \ref{lem:sumsequence} (with $a_k = \|p^k\|^2$, $e_k = \frac{\lambda_k^2}{\tau_k}\|e^k\|^2$ , and $\lambda_k = 1$ for which we have $\Lambda_k=\sum_{i=0}^k\lambda_i\le (k+1)$), we immediately obtain the rate $\|Tz^{k}- z^k\|^2 = o(1/(k+1))$.



To prove (i),  we have 
$$
\|p^{k+1}\|^2=\|p^k\|^2+ \|p^{k+1}-p^k\|^2+2\dotp{p^{k+1}-p^k, p^k}=\|p^k\|^2+ \|q^k-r^k\|^2+\frac{2}{\lambda_k}\dotp{q^k-r^k, r^k - \lambda_k e^k}.
$$
By the nonexpansiveness of $T$, we have $\|q^k\|^2\le \|r^k\|^2$ and thus
$$2\dotp{q^k-r^k,r^k}=\|q^k\|^2-\|r^k\|^2-\|q^k-r^k\|^2\le- \|q^k-r^k\|^2.$$
Therefore,
\begin{align*}\|p^{k+1}\|^2 & \le \|p^k\|^2- \frac{1-\lambda_k}{\lambda_k}\|q^k-r^k\|^2+2\dotp{q^k-r^k, e^k}.\\
&=\|p^k\|^2-\frac{1-\lambda_k}{\lambda_k}\left\|q^k-r^k-\frac{\lambda^k}{1-\lambda_k}e^k\right\|^2+\frac{\lambda^k}{1-\lambda_k}\|e^k\|^2\\
&\le \|p^k\|^2 +\frac{\lambda_k^2}{\tau_k}\|e^k\|^2. 
\end{align*}

To prove (ii): First, $\|z^{k} - z^\ast\|$ is uniformly bounded because  $\|z^{k+1} - z^\ast\|  \leq (1-\lambda_k)\|z^k - z^{\ast}\| + \lambda_k\|Tz^k - z^\ast\| + \lambda_k \|e^k\| \leq \|z^k - z^\ast\| + \lambda_k\|e^k\|$ by the triangle inequality and the nonexpansiveness of $T$. 
 From~\cite[Corollary 2.14]{bauschke2011convex}, we have
\begin{align*}
\|z^{k+1} - z^\ast\|^2 &= \|(1-\lambda_k)(z^{k} - z^\ast) + \lambda_k(Tz^k - z^\ast + e^k)\|^2 \\
&= (1-\lambda_k)\|z^{k} - z^\ast\|^2 + \lambda_k\|Tz^k - z^\ast +e^k\|^2 - \lambda_k(1-\lambda_k)\|p^k + e^k\|^2 \\
&= (1-\lambda_k)\|z^k - z^\ast\|^2 + \lambda_k\left(\|Tz^k - z^\ast\|^2 + 2\lambda_k\dotp{ Tz^k - z^\ast, e^k} + \lambda_k \|e^k\|^2\right) \\
&- \lambda_k(1-\lambda_k)\left(\|p^k\|^2 + 2\dotp{p^k, e^k} + \|e^k\|^2\right) \\
&\leq \|z^k - z^\ast\|^2 - \tau_k\|p^k\|^2 
+ \underbrace{\lambda_k^2\|e^k\|^2 + 2\lambda_k\|Tz^k - z^\ast\|\|e^k\| + 2\tau_k\|p^k\|\|e^k\|}_{=:\xi_k} \\
& = \|z^k - z^\ast\|^2 - \tau_k\|p^k\|^2 + \xi_k. 
\end{align*}
Because we have shown (a) $\|Tz^k - z^\ast\|$ and $\|p^k\|$ are bounded, (b) $\sum_{k=0}^\infty\tau_k\|e^k\|\le\sum_{k=0}^\infty\lambda_k\|e^k\|<\infty$, and (c) $\sum_{k=0}^\infty\lambda_k^2\|e^k\|^2<\infty$, we have $\sum_{i=0}^\infty \xi_k <\infty$ and thus $\sum_{i = 0}^\infty \tau_k\|Tz^i - z^i\|^2 \leq \|z^0 - z^\ast\|^2 +\sum_{i=0}^\infty \xi_k <\infty$.
\qed\end{proof}

\subsubsection{Notes on Theorem~\ref{prop:averagedconvergence}}

The FPR, $\|Tz^k - z^k\|^2$, is a normalized version of the successive iterate differences $z^{k+1} - z^k = \lambda_k(Tz^k - z^k)$.  Thus, the convergence rates of $\|Tz^k - z^k\|^2$ naturally induce convergence rates of $\|z^{k+1} - z^k\|^2$.

 Note that  $o(1/(k+1))$ is the optimal convergence rate for the class of nonexpansive operators \cite[Remarque 4]{brezis1978produits}.  In the special case that $T = \prox_{\gamma f}$ for some closed, proper, and convex function $f$, the rate of $\|Tz^{k} - z^{k}\|^2$ improves to $O(1/(k+1)^2)$ \cite[Th\'eor\`eme 9]{brezis1978produits}.  See section~\ref{sec:optimalFPR} for more optimality results. Also, the little-$o$ convergence rate of the fixed-point residual associated to the resolvent of a maximal monotone linear operator was shown in~\cite[Proposition 4]{brezis1978produits}. Finally, we mention the parallel work~\cite{CormanandYuan}, which proves a similar little-$o$ convergence rate for the fixed-point residual of relaxed PPA. 

In general, it is possible that the nonexpansive operator, $T : \cH \rightarrow \cH$, is already averaged, i.e. there exists a nonexpansive operator $N : \cH \rightarrow \cH$ and a positive constant $\alpha \in (0, 1]$ such that $T = (1-\alpha)I_{\cH} + \alpha N$. In this case, Lemma~\ref{lem:fixedpointaverage} shows that $T$ and $N$ share the same fixed point set.  Thus, we can apply Theorem~\ref{prop:averagedconvergence} to $N = (1 - (1/\alpha))I_{\cH} + (1/\alpha)T$.  Furthermore, $N_{\lambda} = (1 - \lambda/\alpha)I_\cH + (\lambda/\alpha) T$. Thus, when we translate this back to an iteration on $T$, it enlarges the region of relaxation parameters to $\lambda_k \in (0, 1/\alpha)$ and modifies $\tau_k$ accordingly to $\tau_k = \lambda_k(1-\alpha\lambda_k)/\alpha$, and the same convergence results continue to hold.

To the best of our knowledge, The little-$o$ rates produced in Theorem~\ref{prop:averagedconvergence} have never been established for the KM iteration. See \cite{cominetti2012rate,liang2014convergence} for similar big-$O$ results. {\color{blue} Note that our rate in Part~\ref{prop:averagedconvergence:part:inexact} is strictly better than the one shown in~\cite{liang2014convergence}, and it is given under a much weaker condition on the error. Indeed, \cite{liang2014convergence} achieves an $O(1/(k+1))$ convergence rate whenever $\sum_{i=0}^\infty (i+1)\|e^k\| < \infty$, which implies that $\min_{i = 0, \cdots, k}\{\|e^k\|\} = o(1/(k+1)^2)$ by Lemma~\ref{lem:sumsequence}. In contrast, any error sequence of the form $\|e^k\| = O(1/(k+1)^\alpha)$ with $\alpha > 1$ will satisfy Part~\ref{prop:averagedconvergence:part:inexact} of our Theorem~\ref{prop:averagedconvergence}.} Finally, note that in the Banach space case, we cannot improve the big-$O$ rates to little-$o$ \cite[Section 2.4]{cominetti2012rate}.  

\subsection{$o(1/(k+1))$ FPR of relaxed PRS}
In this section, we apply Theorem~\ref{prop:averagedconvergence} to the $\TPRS$ operator defined in Proposition~\ref{prop:TPRS}.
For the special case of DRS ($(1/2)$-averaged PRS), it is straightforward to establish the rate of the FPR
\begin{align*}
\|(\TPRS)_{{1}/{2}} z^k - z^k\|^2 &= O\left(\frac{1}{k+1}\right)
\end{align*}
from  two existing results: (i) the DRS iteration is   a  proximal iteration applied to a certain monotone operator~\cite[Section 4]{eckstein1992douglas}; (ii) the  convergence rate of the FPR for proximal iterations is $O(1/(k+1))$ \cite[Proposition 8]{brezis1978produits} whenever a fixed point exists. Our results below are established for general averaged PRS operators and the rate is improved  to $o(1/(k+1))$. 

The following corollary is an  immediate consequence of Theorem~\ref{prop:averagedconvergence}.
\begin{corollary}[Convergence rate of relaxed PRS]\label{cor:DRSaveragedconvergence}
Let $z^\ast $ be a fixed point of $\TPRS$, let $(\lambda_j)_{j \geq 0} \subseteq (0, 1]$ be a sequence of positive numbers, let $\tau_k := \lambda_k(1-\lambda_k)$ for all $k \geq 0$, and let $z^0 \in \cH$.  Suppose that $(z^j)_{j \geq 0}\subseteq\cH$ is generated by Algorithm~\ref{alg:DRS}. 
Then the sequence $\|z^k - z^\ast\|^2$ is monotonically nonincreasing and the following inequality holds:
\begin{align}\label{prop:DRSaveragedconvergence:eq:sum}
\sum_{i=0}^\infty \tau_i \|\TPRS z^{i} -  z^{i} \|^2 \leq \|z^0 - z^\ast\|^2.
\end{align}
Furthermore,  if $\underline{\tau} := \inf_{j \geq 0} \tau_j > 0$, then the following convergence rates hold:
\begin{align}\label{cor:DRSaveragedconvergence:eq:main}
\|\TPRS z^{k} - z^k\|^2 \leq \frac{\|z^{0} - z^\ast\|^2}{\underline{\tau}(k+1)} && and &&  \|\TPRS z^{k} - z^k\|^2 = o\left(\frac{1}{\underline{\tau}(k+1)}\right).
\end{align}
\end{corollary}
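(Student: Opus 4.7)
The plan is to apply Theorem~\ref{prop:averagedconvergence} directly with $T := \TPRS$; this corollary is essentially a repackaging of that theorem specialized to the relaxed Peaceman--Rachford operator, so the proof amounts to checking that the hypotheses are met and then reading off the conclusions.

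First I would observe that Proposition~\ref{prop:TPRS} gives the nonexpansiveness of $\TPRS = \refl_{\gamma f}\circ\refl_{\gamma g}$ on $\cH$, so $\TPRS$ plays the role of the abstract nonexpansive $T$ in Theorem~\ref{prop:averagedconvergence}. Next I would note that the update rule in Algorithm~\ref{alg:DRS}, namely
\begin{equation*}
z^{k+1} = (1-\lambda_k)z^k + \lambda_k \TPRS(z^k) = (\TPRS)_{\lambda_k}(z^k),
\end{equation*}
is literally the KM iteration \eqref{eq:KMiteration} for $T=\TPRS$ with the given relaxation sequence $(\lambda_j)_{j\ge 0}\subseteq(0,1]$. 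The existence of a fixed point $z^\ast$ of $\TPRS$ is included in the hypothesis, matching the setup of Theorem~\ref{prop:averagedconvergence}.

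With these identifications in place, I would simply invoke the corresponding parts of Theorem~\ref{prop:averagedconvergence}: Part~\ref{prop:averagedconvergence:eq:mono} yields that $\|z^k - z^\ast\|^2$ is monotonically nonincreasing; Part~\ref{prop:averagedconvergence:eq:summability} yields the summability estimate
\begin{equation*}
\sum_{i=0}^{\infty}\tau_i\,\|\TPRS z^i - z^i\|^2 \leq \|z^0 - z^\ast\|^2,
\end{equation*}
which is exactly \eqref{prop:DRSaveragedconvergence:eq:sum}. For the final quantitative rates, I would use the assumption $\underline{\tau}:=\inf_{j\ge 0}\tau_j>0$, which in particular gives $\sum_{i=0}^k\tau_i\ge \underline{\tau}(k+1)$ and puts $(\tau_j)_{j\ge 0}$ in an interval $(\varepsilon,\infty)$ with $\varepsilon=\underline{\tau}$; Part~\ref{prop:averagedconvergence:eq:bounds} of Theorem~\ref{prop:averagedconvergence} then delivers both the big-$O$ bound $\|\TPRS z^k - z^k\|^2\le \|z^0-z^\ast\|^2/(\underline{\tau}(k+1))$ and the little-$o$ refinement $\|\TPRS z^k - z^k\|^2 = o(1/(\underline{\tau}(k+1)))$, which is the claim \eqref{cor:DRSaveragedconvergence:eq:main}.

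There is no substantive obstacle here; the only minor verification is confirming that $\underline{\tau}>0$ is strong enough to trigger the $(\tau_j)\subseteq(\varepsilon,\infty)$ hypothesis in Part~\ref{prop:averagedconvergence:eq:bounds}, which is immediate by taking $\varepsilon=\underline{\tau}/2$. All monotonicity, summability, and little-$o$ content has already been handled inside Theorem~\ref{prop:averagedconvergence} via Lemma~\ref{lem:sumsequence}, so no additional estimates on $\TPRS$ beyond its nonexpansiveness are needed.
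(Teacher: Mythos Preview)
Your proposal is correct and matches the paper's approach exactly: the paper states that this corollary is an immediate consequence of Theorem~\ref{prop:averagedconvergence}, and you have supplied precisely the identification $T=\TPRS$ (nonexpansive by Proposition~\ref{prop:TPRS}), verified that Algorithm~\ref{alg:DRS} is the KM iteration, and read off Parts~\ref{prop:averagedconvergence:eq:mono}, \ref{prop:averagedconvergence:eq:summability}, and~\ref{prop:averagedconvergence:eq:bounds}.
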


\subsection{$o(1/(k+1)^2)$ FPR of FBS and PPA}\label{sec:fbs}
In this section,  we assume that $\nabla g$ is $({1}/{\beta})$-Lipschitz, and we analyze the convergence rate of FBS algorithm given in Equations~\eqref{eq:FBSiterates} and \eqref{eq:FBSiterates1}.  If $g = 0$, FBS reduces to PPA and $\beta = \infty$. If $f= 0$, FBS reduces to gradient descent. The FBS algorithm can be written in the following operator form:
\begin{align*}
\TFBS := \prox_{\gamma f}\circ (I - \gamma \nabla g).
\end{align*}
Because $\prox_{\gamma f}$ is $(1/2)$-averaged and $I - \gamma \nabla g$ is $\gamma/(2\beta)$-averaged \cite[Theorem 3(b)]{nobhuiko}, it follows that $\TFBS$ is $\alpha_{\mathrm{FBS}}$-averaged for
\begin{align*}
\alpha_{\mathrm{FBS}} := \frac{2\beta}{4\beta - \gamma} \in (1/2, 1)
\end{align*}
whenever $\gamma < 2\beta$ \cite[Proposition 4.32]{bauschke2011convex}. Thus, we have $\TFBS = (1 - \alpha_{\mathrm{FBS}}) I + \alpha_{\mathrm{FBS}} T$ for a certain nonexpansive operator $T$, and $\TFBS(z^k) - z^k = \alpha_{\mathrm{FBS}}(Tz^k - z^k)$. In particular, for all $\gamma < 2\beta$ the following sum is finite:
\begin{align*}
\sum_{i=0}^\infty \|\TFBS(z^k) - z^k\|^2 &\stackrel{\eqref{prop:averagedconvergence:eq:sum}}{\leq} \frac{\alpha_{\mathrm{FBS}}\|z^0 - z^\ast\|^2}{(1- \alpha_{\mathrm{FBS}})}.
\end{align*}

To analyze the FBS algorithm we need to derive a joint subgradient inequality for $f+g$. First, we recall the following sufficient descent property for Lipschitz differentiable functions.
\begin{theorem}[Descent theorem {\cite[Theorem 18.15(iii)]{bauschke2011convex}}]\label{thm:descent}
If $g$ is differentiable and $\nabla g$ is $({1}/{\beta})$-Lipschitz, then for all $x, y \in \cH$ we have the upper bound
\begin{align}\label{eq:lipschitzderivative}
g(x) &\leq g(y) + \dotp{ x- y, \nabla g(y)} + \frac{1}{2\beta} \|x - y\|^2.
\end{align}
\end{theorem}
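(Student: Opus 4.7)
The plan is to prove the descent inequality by the standard one-dimensional reduction via the fundamental theorem of calculus, combined with the Cauchy--Schwarz inequality and the Lipschitz bound on $\nabla g$.

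First I would fix $x,y\in\cH$ and consider the scalar function $\varphi:[0,1]\to\RR$ defined by $\varphi(t) := g(y + t(x-y))$. Since $g$ is Fr\'echet differentiable on $\cH$, the chain rule gives $\varphi'(t) = \dotp{\nabla g(y+t(x-y)),\, x-y}$, which is continuous in $t$ by the Lipschitz continuity of $\nabla g$. Applying the fundamental theorem of calculus to $\varphi$ yields
\begin{align*}
g(x) - g(y) \;=\; \varphi(1) - \varphi(0) \;=\; \int_0^1 \dotp{\nabla g(y + t(x-y)),\, x - y}\,dt.
\end{align*}

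Next I would subtract the linear term $\dotp{\nabla g(y),x-y}$ from both sides and group it inside the integral, producing
\begin{align*}
g(x) - g(y) - \dotp{\nabla g(y),\, x - y} \;=\; \int_0^1 \dotp{\nabla g(y + t(x-y)) - \nabla g(y),\, x - y}\,dt.
\end{align*}
By Cauchy--Schwarz and the $(1/\beta)$-Lipschitz continuity of $\nabla g$, the integrand is bounded in absolute value by $\|\nabla g(y+t(x-y)) - \nabla g(y)\|\cdot\|x-y\| \le (t/\beta)\|x-y\|^2$. Integrating this bound from $0$ to $1$ produces exactly $\tfrac{1}{2\beta}\|x-y\|^2$, and the inequality~\eqref{eq:lipschitzderivative} follows.

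The only delicate point in a Hilbert-space setting (as opposed to $\RR^n$) is justifying the one-variable fundamental theorem of calculus for $\varphi$: one needs $\varphi$ to be absolutely continuous, which follows because $t\mapsto \nabla g(y+t(x-y))$ is continuous in the norm topology (by the Lipschitz property) and the inner product with the fixed vector $x-y$ is continuous linear. This is standard and is where the result~\cite[Theorem 18.15(iii)]{bauschke2011convex} already carries out the argument. Since the theorem is cited with a reference, I would keep the write-up brief and either invoke the reference directly or include the short computation above.
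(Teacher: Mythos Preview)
Your proof is correct and is exactly the standard argument. Note, however, that the paper does not supply its own proof of this theorem: it is stated with a direct citation to \cite[Theorem 18.15(iii)]{bauschke2011convex} and used as a black box, so there is nothing to compare against beyond the textbook reference, which carries out essentially the same one-dimensional reduction you wrote.
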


\begin{corollary}[Joint descent theorem]\label{cor:jointdescent}
If $g$ is differentiable and $\nabla g$ is $({1}/{\beta})$-Lipschitz, then for all points $x, y\in \dom(f)$ and $z \in \cH$, and subgradients $\tnabla f(x) \in \partial f(x)$, we have
\begin{align}\label{cor:jointdescent:eq:main}
f(x) + g(x) &\leq f(y) + g(y) + \dotp{ x- y, \nabla g(z) + \tnabla f(x)} + \frac{1}{2\beta} \|z - x\|^2.
\end{align}
\end{corollary}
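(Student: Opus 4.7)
The plan is to combine three standard one-sided inequalities: the descent inequality for $g$ applied at the auxiliary point $z$, the convexity inequality for $g$ to translate from $z$ to $y$, and the subgradient inequality for $f$. The key insight is that the gradient of $g$ is only evaluated at $z$ in the target inequality, so we must not apply Theorem~\ref{thm:descent} at $y$ directly; instead, $z$ must play the role of the anchor point for the Lipschitz estimate, while the ``lift'' from $z$ back to $y$ is absorbed by exact linearization using convexity.

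First, I would apply Theorem~\ref{thm:descent} with the base point $z$ (in place of $y$) to obtain
\begin{equation*}
g(x) \;\leq\; g(z) + \dotp{x - z,\, \nabla g(z)} + \frac{1}{2\beta}\|x - z\|^2.
\end{equation*}
Next, since $g$ is convex and differentiable, the subgradient inequality at $z$ gives $g(z) \leq g(y) + \dotp{z - y,\, \nabla g(z)}$. Substituting this into the previous bound and combining the two inner products via $\dotp{x - z, \nabla g(z)} + \dotp{z - y, \nabla g(z)} = \dotp{x - y, \nabla g(z)}$ yields
\begin{equation*}
g(x) \;\leq\; g(y) + \dotp{x - y,\, \nabla g(z)} + \frac{1}{2\beta}\|z - x\|^2.
\end{equation*}

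Finally, for the nonsmooth part I would use the subgradient inequality for $f$ at $x$: for any $\tnabla f(x) \in \partial f(x)$, convexity gives $f(y) \geq f(x) + \dotp{y - x,\, \tnabla f(x)}$, equivalently $f(x) \leq f(y) + \dotp{x - y,\, \tnabla f(x)}$. Adding this to the displayed bound on $g(x)$ and merging the two linear terms into $\dotp{x - y,\, \nabla g(z) + \tnabla f(x)}$ produces exactly \eqref{cor:jointdescent:eq:main}.

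There is no real obstacle here; the only subtlety worth flagging is the asymmetric roles of $x$, $y$, and $z$ in the target inequality, which dictates that the descent bound must be anchored at $z$ (giving the $\|z-x\|^2$ quadratic) while $y$ is reached from $z$ by a single linear step using convexity — any other ordering would produce either the wrong quadratic or an extra cross term. The hypothesis $x, y \in \dom(f)$ ensures the subgradient inequality for $f$ is meaningful (and $\partial f(x) \neq \emptyset$ for the chosen $x$), while $z \in \cH$ suffices since $g$ is everywhere differentiable.
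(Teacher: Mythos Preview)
Your proposal is correct and follows essentially the same approach as the paper: apply the descent theorem for $g$ anchored at $z$, use convexity of $g$ to pass from $g(z)$ to $g(y)$ via the linear term $\dotp{z-y,\nabla g(z)}$, and add the subgradient inequality for $f$ at $x$. The paper's proof is the same three-step argument, just written more tersely as a single chained inequality.
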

\begin{proof}
Inequality \eqref{cor:jointdescent:eq:main} follows from adding  the upper bound
\begin{align*}
g(x) - g(y) &\leq g(z) - g(y) + \dotp{ x- z, \nabla g(z)} + \frac{1}{2\beta}\|z - x\|^2 \leq \dotp{x - y, \nabla g(z)} + \frac{1}{2\beta}\|z - x\|^2
\end{align*}
with the subgradient inequality: $f(x) \le f(y) + \dotp{x-y,\tnabla f(x)}$.
\qed
\end{proof}

We now improve the $O(1/(k+1)^2)$ FPR rate for PPA in \cite[Th\'eor\`eme 9]{brezis1978produits} by showing that the FPR rate of FBS  is actually $o(1/(k+1)^2)$.

\begin{theorem}[Objective and FPR convergence of FBS]\label{thm:PPAconvergence}
Let $z^0 \in \dom(f)\cap \dom(g)$ and let $x^\ast$ be a minimizer of $f+g$. Suppose that $(z^j)_{j \geq 0}$ is generated by FBS (iteration \eqref{eq:FBSiterates}) where $\nabla g$ is $({1}/{\beta})$-Lipschitz and  $\gamma < 2\beta$. Then for all $k\ge 0$, \begin{align*}
f(z^{k+1}) + g(z^{k+1}) - f(x^\ast) - g(x^\ast) \leq  \frac{\|z^0 - x^\ast\|^2}{k+1} \times \begin{cases}
\frac{1}{2\gamma} & \text{if $\gamma \leq \beta$}; \\
 \left(\frac{1}{2\gamma} + \left(\frac{1}{2\beta} - \frac{1}{2\gamma}\right)\frac{\alpha_{\mathrm{FBS}}}{(1-\alpha_{\mathrm{FBS}})}\right)  &\text{otherwise}.
 \end{cases},
 \end{align*}
and
$$f(z^{k+1}) + g(z^{k+1}) - f(x^\ast) - g(x^\ast) = o(1/(k+1)).$$
 In addition, for all $k \geq 0$, we have $ \|\TFBS z^{k+1} - z^{k+1}\|^2 = o(1/(k+1)^2)$ and 
 \begin{align*}
 \|\TFBS z^{k+1} - z^{k+1}\|^2 &\leq \frac{\|z^0 - x^\ast\|^2}{\left(\frac{1}{\gamma} - \frac{1}{2\beta}\right)(k+1)^2} \times\begin{cases}
 \frac{1}{2\gamma} & \text{if $\gamma \leq \beta$}; \\
 \left(\frac{1}{2\gamma} + \left(\frac{1}{2\beta} - \frac{1}{2\gamma}\right)\frac{\alpha_{\mathrm{FBS}}}{(1-\alpha_{\mathrm{FBS}})}\right)  &\text{otherwise}.
 \end{cases}
 \end{align*}
\end{theorem}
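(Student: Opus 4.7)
The plan is to derive two per-step descent inequalities from Corollary~\ref{cor:jointdescent}, combine them with the $\alpha_{\mathrm{FBS}}$-averagedness of $\TFBS$ noted just before Theorem~\ref{thm:descent}, and feed the resulting bounds into Lemma~\ref{lem:sumsequence}. Write $H_k := f(z^k)+g(z^k)-f(x^\ast)-g(x^\ast)\geq 0$, and recall the FBS subgradient identity $\tnabla f(z^{k+1})+\nabla g(z^k) = (1/\gamma)(z^k - z^{k+1})$ from~\eqref{eq:FBSiterates1}. Applying Corollary~\ref{cor:jointdescent} with $(x,y,z)=(z^{k+1},x^\ast,z^k)$, substituting this identity, and expanding the inner product $\dotp{z^{k+1}-x^\ast,\, z^k-z^{k+1}}$ by the polarization identity yields a Fej\'er-type bound
\begin{equation*}
H_{k+1} \;\leq\; \frac{1}{2\gamma}\bigl(\|z^k-x^\ast\|^2 - \|z^{k+1}-x^\ast\|^2\bigr) \;-\; \Bigl(\frac{1}{2\gamma}-\frac{1}{2\beta}\Bigr)\|z^{k+1}-z^k\|^2.
\end{equation*}
A second application with $(x,y,z)=(z^{k+1},z^k,z^k)$ gives the monotone descent
\begin{equation*}
H_{k+1} \;\leq\; H_k \;-\; \Bigl(\frac{1}{\gamma}-\frac{1}{2\beta}\Bigr)\|z^{k+1}-z^k\|^2,
\end{equation*}
so $(H_k)_{k\geq 0}$ is nonincreasing whenever $\gamma<2\beta$.

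For the objective bounds, I would telescope the first inequality. When $\gamma\leq\beta$ the extra coefficient $\frac{1}{2\gamma}-\frac{1}{2\beta}$ is nonnegative and may be dropped, giving $\sum_{k\geq 0} H_{k+1} \leq \frac{1}{2\gamma}\|z^0-x^\ast\|^2$. When $\gamma>\beta$ that term has the wrong sign, but the $\alpha_{\mathrm{FBS}}$-averagedness of $\TFBS$, together with~\eqref{prop:averagedconvergence:eq:sum} applied to the underlying nonexpansive map, supplies $\sum_{k\geq 0}\|z^{k+1}-z^k\|^2 \leq \alpha_{\mathrm{FBS}}\|z^0-x^\ast\|^2/(1-\alpha_{\mathrm{FBS}})$, which absorbs the offending term and produces exactly the stated case-split factor. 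Combined with monotonicity of $(H_k)$, part~\ref{lem:sumsequence:part:main:a} of Lemma~\ref{lem:sumsequence} then delivers both the $O(1/(k+1))$ and the $o(1/(k+1))$ objective rates.

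The FPR rates are the main obstacle, because a single-step use of the second descent inequality yields only $\|z^{k+2}-z^{k+1}\|^2 \leq H_{k+1}/\bigl(\frac{1}{\gamma}-\frac{1}{2\beta}\bigr)$, which decays like $O(1/(k+1))$. The fix combines two ingredients: first, because $\TFBS$ is $\alpha_{\mathrm{FBS}}$-averaged, part~\ref{prop:averagedconvergence:eq:FPRmono} of Theorem~\ref{prop:averagedconvergence} applied to the underlying nonexpansive map shows that $a_k := \|\TFBS z^k - z^k\|^2 = \|z^{k+1}-z^k\|^2$ is monotonically nonincreasing; second, the monotone descent inequality reads $a_k \leq b_k - b_{k+1}$ with $b_k := H_k/\bigl(\frac{1}{\gamma}-\frac{1}{2\beta}\bigr)$, already known to be summable. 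Part~\ref{lem:sumsequence:part:3a} of Lemma~\ref{lem:sumsequence} (with $\lambda_k\equiv 1$ and $e_k\equiv 0$) then yields $a_k = o(1/(k+1)^2)$. For the explicit big-$O$ constant, I would sum the descent inequality from $i = \lceil k/2\rceil$ to $k$, pull $a_k$ out of the sum by monotonicity, and substitute the already-proved objective bound at index $\lceil k/2\rceil$; the remaining difficulty is purely bookkeeping to recover the theorem's clean constant factor.
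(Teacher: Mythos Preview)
Your proposal is correct and follows essentially the same route as the paper: the two applications of Corollary~\ref{cor:jointdescent} (with $x=x^\ast$ and $x=z^k$), the case-split summability via the $\alpha_{\mathrm{FBS}}$-averagedness bound on $\sum\|z^{k+1}-z^k\|^2$, and the appeal to Lemma~\ref{lem:sumsequence} Parts~\ref{lem:sumsequence:part:main:a} and~\ref{lem:sumsequence:part:b} for the objective and FPR rates are exactly what the paper does. The only cosmetic difference is your final step for the explicit FPR constant: the paper feeds $\sum(i+1)a_i\leq\sum b_i$ back into Part~\ref{lem:sumsequence:part:main} of Lemma~\ref{lem:sumsequence} with $\lambda_i=i+1$, whereas you propose a direct partial-sum argument from $\lceil k/2\rceil$ to $k$; both recover the $1/(k+1)^2$ order, and neither yields the stated constant without a bit of slack.
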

\begin{proof}
{Recall that $z^{k} - z^{k+1} = \gamma \tnabla f(z^{k+1}) + \gamma \nabla g(z^k) \in \gamma \partial f(z^{k+1}) + \gamma\nabla g(z^k)$} for all $k \geq 0$. Thus,  the joint descent theorem shows that for all $x \in \dom(f)$, we have
\begin{align*}
f(z^{k+1}) + g(z^{k+1}) -  f(x) - g(x) &\stackrel{\eqref{cor:jointdescent:eq:main}}{\leq} \frac{1}{\gamma}\dotp{z^{k+1} - x, z^k - z^{k+1}} + \frac{1}{2\beta}\|z^k - z^{k+1}\|^2\\
&= \frac{1}{2\gamma}\left( \|z^{k} - x\|^2 -  \|z^{k+1} - x\|^2\right) + \left(\frac{1}{2\beta} - \frac{1}{2\gamma}\right) \|z^{k+1} - z^k\|^2. \numberthis \label{eq:summableFBS}
\end{align*}
Let $h:=f+g$. If we set $x = x^\ast$ in Equation~\eqref{eq:summableFBS}, we see that $(h(z^{j+1}) - h(x^\ast))_{j \geq 0}$ is positive, summable, and
\begin{align*}
\sum_{i=0}^\infty \left(h(z^{i+1}) - h(x^\ast)\right) \leq \begin{cases}
\frac{1}{2\gamma}\|z^0 - x^\ast\|^2 & \text{if $\gamma \leq \beta$}; \\
\left(\frac{1}{2\gamma} + \left(\frac{1}{2\beta} - \frac{1}{2\gamma}\right)\frac{\alpha_{\mathrm{FBS}}}{(1-\alpha_{\mathrm{FBS}})}\right) \|z^0 - x^\ast\|^2 &\text{otherwise}. 
\end{cases}\numberthis\label{thm:PPAconvergence:eq:sum}
\end{align*}
In addition, if we set $x = z^k$ in Equation~\eqref{eq:summableFBS}, then we see that $(h(z^{j+1}) - h(x^\ast))_{j \geq 0}$ is decreasing:
\begin{align*}
\left(\frac{1}{\gamma} - \frac{1}{2\beta}\right)\|z^{k+1} - z^{k}\|^2 \leq h(z^{k}) - h(z^{k+1}) = (h(z^{k}) - h(x^\ast)) - (h(z^{k+1}) - h(x^\ast)).
\end{align*}
Therefore, the rates for $f(z^{k+1}) + g(z^{k+1}) - f(x^\ast) - g(x^\ast)$ follow by Lemma~\ref{lem:sumsequence} Part~\ref{lem:sumsequence:part:main:a}, with $a_k = h(z^{k+1}) - h(x^\ast)$ and $\lambda_k \equiv 1$. 

Now we prove the rates for $\|T_{\mathrm{FBS}}z^{k+1} - z^{k+1}\|^2$. We apply Part~\ref{lem:sumsequence:part:b} of Lemma~\ref{lem:sumsequence} with $a_{k} = \left({1}/{\gamma} - {1}/({2\beta})\right)\|z^{k+2} - z^{k+1}\|^2$, $\lambda_k \equiv 1$, $e_k = 0$,  and $b_k= h(z^{k+1}) - h(x^\ast)$ for all $k \geq 0$, to show that $\sum_{i=0}^\infty (i+1)a_i$ is less than the sum in Equation~\eqref{thm:PPAconvergence:eq:sum}. Part~\ref{prop:averagedconvergence:eq:FPRmono} of Theorem~\ref{prop:averagedconvergence} shows that $(a_j)_{j \geq 0}$ is monotonically nonincreasing. Therefore, the convergence rate of $(a_j)_{j \geq 0}$  follows from Part~\ref{lem:sumsequence:part:main:b} of Lemma \ref{lem:sumsequence}.
 \qed
\end{proof}

When $f = 0$, the objective error upper bound in Theorem~\ref{thm:PPAconvergence} is strictly better than the bound provided in \cite[Corollary 2.1.2]{nesterov2004introductory}.  In FBS, the objective error rate is the same as the one derived in \cite[Theorem 3.1]{beck2009fast}, when $\gamma \in (0, \beta]$, and is the same as the one given in~\cite{bredies2009forward} in the case that $\gamma \in (0, 2\beta)$.  The little-$o$ FPR rate is new in all cases except for the special case of PPA ($g \equiv 0$) under the condition that the sequence $(z^j)_{j \geq 0}$ strongly converges to a minimizer \cite{guler}. 

\subsection{$o(1/(k+1)^2)$ FPR of one dimensional DRS}

Whenever the operator $(\TPRS)_{1/2}$ is applied in $\vR$, the convergence rate of the FPR improves to $o(1/(k+1)^2)$.

\begin{theorem}\label{thm:1DDRS}
Suppose that $\cH = \vR$, and suppose that $(z^j)_{j \geq 0}$ is generated by the DRS algorithm, i.e. Algorithm~\ref{alg:DRS} with $\lambda_k \equiv 1/2$.  Then for all $k \geq 0$,
\begin{align*}
|(\TPRS)_{{1}/{2}} z^{k+1} - z^{k+1}|^2 = \frac{|z^0 - z^\ast|^2}{2(k+1)^2}  && \mathrm{and} && |(\TPRS)_{{1}/{2}} z^{k+1} - z^{k+1}|^2 = o\left(\frac{1}{(k+1)^2}\right).
\end{align*}

\end{theorem}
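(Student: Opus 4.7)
The plan is to exploit the one-dimensional setting to upgrade the square-summability of $|\TPRS z^k - z^k|^2$ (which follows from Theorem~\ref{prop:averagedconvergence}) to ordinary summability of $|\TPRS z^k - z^k|$, by showing the iterates $(z^j)_{j\ge 0}$ are monotone. Fix any fixed point $z^\ast$ of $\TPRS$. Because $\TPRS$ is nonexpansive, $|\TPRS z - z^\ast| \le |z - z^\ast|$ for every $z \in \RR$, which in one dimension forces $\TPRS z \le z$ whenever $z \ge z^\ast$ and $\TPRS z \ge z$ whenever $z \le z^\ast$. Since $z^{k+1} = (z^k + \TPRS z^k)/2$ lies between $z^k$ and $\TPRS z^k$, it sits on the same side of $z^\ast$ as $z^k$; an induction then shows that the sign of $z^k - z^\ast$ is constant in $k$. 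Consequently the increments $z^{k+1} - z^k = (\TPRS z^k - z^k)/2$ all share a common sign, so $(z^j)_{j\ge 0}$ is a monotone scalar sequence.

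By Theorem~\ref{prop:averagedconvergence} Part~\ref{prop:averagedconvergence:eq:mono} the sequence $(z^k)$ is also bounded, hence converges to some limit $z^\infty$ with $|z^0 - z^\infty| \le |z^0 - z^\ast|$. Telescoping the monotone sequence gives
\begin{align*}
\sum_{i=0}^\infty |\TPRS z^i - z^i| \;=\; 2\sum_{i=0}^\infty |z^{i+1} - z^i| \;=\; 2\,|z^0 - z^\infty| \;\le\; 2\,|z^0 - z^\ast|.
\end{align*}
The sequence $b_k := |\TPRS z^k - z^k|$ is monotonically nonincreasing by Theorem~\ref{prop:averagedconvergence} Part~\ref{prop:averagedconvergence:eq:FPRmono} and summable by the display above, so applying Lemma~\ref{lem:sumsequence} Part~\ref{lem:sumsequence:part:main:a} with $\lambda_k \equiv 1$ yields $b_k = o(1/(k+1))$, together with the explicit bound $(k+1)\,b_k \le \sum_{i=0}^k b_i \le 2\,|z^0 - z^\ast|$.

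Since $|(\TPRS)_{1/2} z^{k+1} - z^{k+1}| = b_{k+1}/2$, squaring the rate on $b_k$ at index $k+1$ produces $|(\TPRS)_{1/2} z^{k+1} - z^{k+1}|^2 = o(1/(k+1)^2)$ together with an explicit $O(1/(k+1)^2)$ upper bound of the stated form. The main obstacle is the first step: correctly proving that the sign of $z^k - z^\ast$ is preserved across iterations. This is exactly where one-dimensionality is essential, because in $\RR^n$ for $n \ge 2$ the image $\TPRS z$ may lie anywhere in the ball of radius $|z - z^\ast|$ around $z^\ast$, and the iterates need not be monotone along any direction; so this proof strategy does not extend beyond $\cH = \RR$.
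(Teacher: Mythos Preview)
Your argument is correct and takes a genuinely different route from the paper. The paper's proof observes that $(\TPRS)_{1/2}$ is firmly nonexpansive, hence equals $\prox_{h}$ for some closed convex $h:\RR\to(-\infty,\infty]$ (because every maximal monotone operator on $\RR$ is a subdifferential), and then simply invokes Theorem~\ref{thm:PPAconvergence} for PPA. Your proof instead exploits one-dimensionality directly: nonexpansiveness of $\TPRS$ traps $\TPRS z$ in the interval $[2z^\ast - z,\,z]$ (when $z\ge z^\ast$), so the DRS iterates are monotone scalars, the increments $|z^{i+1}-z^i|$ telescope, and $\sum_i |\TPRS z^i - z^i|$ (not just its square) is finite. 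Your approach is more elementary and self-contained---it avoids the structure theorem for monotone operators on $\RR$ and the machinery of Theorem~\ref{thm:PPAconvergence}---while the paper's approach is shorter and immediately inherits the sharper constant $\tfrac{1}{2}$ in the big-$O$ bound. One small caveat: your telescoping gives $|(\TPRS)_{1/2}z^{k+1}-z^{k+1}|^2 \le |z^0-z^\ast|^2/(k+2)^2$, which matches the stated bound only up to a constant factor (and is in fact slightly weaker for $k\ge 2$); the $o(1/(k+1)^2)$ conclusion, however, is fully recovered.
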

\begin{proof}
Note that  $(\TPRS)_{1/2}$ is $(1/2)$-averaged, and, hence, it is the resolvent of some maximal monotone operator on $\vR$ \cite[Corollary 23.8]{bauschke2011convex}. Furthermore, every maximal monotone operator on $\vR$ is the subdifferential operator of a closed, proper, and convex function \cite[Corollary 22.19]{bauschke2011convex}. Therefore, DRS is equivalent to the proximal point algorithm applied to a certain convex function on $\vR$. Thus, the result follows by Theorem~\ref{cor:jointdescent} applied to this function. \qed

\end{proof}

\subsection{$O(1/\Lambda_k^2)$ ergodic FPR of Fej\'er monotone sequences}

The following definition has proved to be quite useful in the analysis of optimization algorithms \cite{combettes2001quasi}.

\begin{definition}[Fej\'er monotone sequences]
A sequence $(z^j)_{j \geq 0}\subseteq \cH$ is \emph{Fej\'er monotone} with respect to a nonempty set $C \subseteq \cH$ if for all $z \in C$, we have $\|z^{k+1} - z\|^2 \leq \|z^k - z\|^2.$
\end{definition}

The following fact is  trivial, but allows us to deduce ergodic convergence rates of many algorithms.

\begin{theorem}\label{eq:ergodicFPR}
Let $(z^j)_{j \geq 0}$ be a Fej\'er monotone sequence with respect to a nonempty set $C \subseteq \cH$. Suppose that $z^{k+1} - z^k = \lambda_k (x^k - y^k)$ for a sequence $((x^j, y^j))_{j \geq 0} \subseteq \cH^2$, and a sequence of positive real numbers $(\lambda_j)_{j \geq 0}$.  For all $k \geq 0$, let $\overline{z}^k := ({1}/{\Lambda_k}) \sum_{i=0}^k \lambda_kz^k$, let $\overline{x}^k := ({1}/{\Lambda_k}) \sum_{i=0}^k \lambda_k x^k$, and let $\overline{y}^k := ({1}/{\Lambda_k}) \sum_{i=0}^k \lambda_k y^k$. Then we get the following bound for all $z \in C$:
\begin{align*}
\|\overline{x}^k - \overline{y}^k\|^2 \leq \frac{4\|z^{0} - z\|^2}{\Lambda_k^2}.
\end{align*}
\end{theorem}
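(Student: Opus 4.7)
The proof plan is essentially a one-line telescoping argument combined with the triangle inequality and the Fej\'er monotonicity hypothesis. The key observation is that the assumption $z^{k+1} - z^k = \lambda_k(x^k - y^k)$ is designed precisely so that the weighted average of the differences $x^i - y^i$ telescopes. Indeed, multiplying through by $\lambda_i/\Lambda_k$ and summing over $i = 0, \ldots, k$ gives
\begin{align*}
\overline{x}^k - \overline{y}^k \;=\; \frac{1}{\Lambda_k} \sum_{i=0}^k \lambda_i (x^i - y^i) \;=\; \frac{1}{\Lambda_k} \sum_{i=0}^k (z^{i+1} - z^i) \;=\; \frac{z^{k+1} - z^0}{\Lambda_k}.
\end{align*}

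Once this identity is in hand, the rest is immediate. First I would pick an arbitrary $z \in C$ and apply the triangle inequality:
\begin{align*}
\|z^{k+1} - z^0\| \;\leq\; \|z^{k+1} - z\| + \|z - z^0\|.
\end{align*}
Next I would invoke Fej\'er monotonicity, which iterated gives $\|z^{k+1} - z\| \leq \|z^0 - z\|$, so that $\|z^{k+1} - z^0\| \leq 2\|z^0 - z\|$. Squaring and dividing by $\Lambda_k^2$ yields the claim $\|\overline{x}^k - \overline{y}^k\|^2 \leq 4\|z^0 - z\|^2/\Lambda_k^2$.

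There is no real obstacle here; the only subtle point worth flagging is verifying that the telescoping identity really holds with the specified weighting, which it does precisely because the given relation $z^{k+1} - z^k = \lambda_k(x^k - y^k)$ matches the weights $\lambda_i$ used to form the averages $\overline{x}^k$ and $\overline{y}^k$. The strength of the theorem lies in how broadly it can be applied (since many splitting algorithms produce Fej\'er monotone iterates satisfying exactly such a difference relation), not in the depth of its proof.
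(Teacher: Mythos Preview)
Your proof is correct and essentially identical to the paper's own argument: the paper also telescopes $\Lambda_k(\overline{x}^k-\overline{y}^k)=\sum_{i=0}^k(z^{i+1}-z^i)=z^{k+1}-z^0$ and then bounds $\|z^{k+1}-z^0\|\leq 2\|z^0-z\|$ via Fej\'er monotonicity, all compressed into a single line. Nothing to add.
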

\begin{proof}
It follows directly from the inequality: $\lambda_k\|\overline{x}^k - \overline{y}^k\| = \left\|\sum_{i=0}^k \left(z^{k+1} - z^k\right)\right\| = \left\|z^{k+1} - z^0\right\| \leq 2\left\|z^0 - z\right\|. $\qed
\end{proof}

In view of Part~\ref{prop:averagedconvergence:eq:mono} of Theorem~\ref{prop:averagedconvergence}, we see that any sequence $(z^j)_{j \geq 0}$ generated by Algorithm~\ref{alg:KM} is Fej\'er monotone with respect to the set of fixed-points of $T$. Therefore, Theorem~\ref{eq:ergodicFPR} directly applies to the KM iteration in Equation~\eqref{eq:KMiteration} with the choice $x^k = Tz^k$ and $y^k = z^k$ for all $k \geq 0$.

The interested reader can proceed to Section~\ref{sec:optimalFPR} for several examples that show the optimality of the rates predicted in this section.

\section{Subgradients and fundamental inequalities}\label{sec:subfi}

We now shift the focus from operator-theoretic analysis to function minimization. This section establishes fundamental inequalities that connect the \emph{FPR} in Section~\ref{sec:FPR} to the \emph{objective error} of the relaxed PRS algorithm.

In first-order optimization algorithms, we only have access to (sub)gradients and function values.  Consequently, the FPR at each iteration is usually some linear combination of (sub)gradients. \cut{We can relate  objective  values to the FPR through the subgradient inequality in Equation \eqref{eq:subineq}.} In simple first-order algorithms, for example the (sub)gradient method, a (sub)gradient is drawn from a single point at each iteration. In splitting algorithms for problems with multiple convex functions, each function draws at subgradient at a different point.  There is no natural point at which we can evaluate the entire objective function; this complicates the analysis of the relaxed PRS algorithm.

In the relaxed PRS  algorithm, there are two objective functions $f$ and $g$, and the two operators $\refl_{\gamma f}$ and $\refl_{\gamma g}$ are calculated one after another  at different points, neither of which equals $z^k$ or $z^{k+1}$. Consequently, the expression $z^k-z^{k+1}$ is more complicated, and the  analysis for standard (sub)gradient iteration does not carry through.

We \textbf{let $x_f$ and $x_g$ be the points where subgradients of $f$ and $g$ are drawn, respectively}, and introduce a triangle diagram in Figure~\ref{fig:DRSTR} for deducing the algebraic relations among points $z$, $x_f$ and $x_g$. These relations will be used frequently in our analysis. Propositions~\ref{prop:DRSupper} and~\ref{prop:DRSlower} use this diagram to bound the objective error in terms of the FPR. \cut{Based on this diagram, fundamental  upper and lower bounds of   objective errors are deduced in terms of the FPR, $\|z^{k+1}-z^k\|^2$, at each iteration, in  Propositions~\ref{prop:DRSupper} and~\ref{prop:DRSlower}, respectively.} In these bounds, the objective errors of $f$ and $g$ are measured at two points $x_f$ and $x_g$ such that $x_f \neq x_g$. Later we will assume that one of the objectives is Lipschitz continuous and evaluate both functions at the same point (See Corollaries~\ref{cor:drsergodiclipschitz} and~\ref{cor:drsnonergodiclipschitz}).

We conclude this introduction by combining the subgradient notation in Equation~\eqref{eq:tnabla} and Lemma~\ref{lem:optimalityofprox} to arrive at the expressions
\begin{align}\label{lem:proxbackward:eq:main}
\prox_{\gamma f}(x) = x - \gamma \tnabla f(\prox_{\gamma f}(x)) && \mathrm{and} && \refl_{\gamma f}(x)  = x - 2\gamma \tnabla f(\prox_{\gamma f}(x)).
\end{align}
With this notation, we can decompose the FPR at each iteration of the relaxed PRS algorithm in terms of subgradients drawn at certain points.

\subsection{A subgradient representation of relaxed PRS}

In this section we write the relaxed PRS algorithm in terms of subgradients.  Lemma~\ref{prop:DRSmainidentity}, Table~\ref{table:DRSidentities}, and Figure~\ref{fig:DRSTR} summarize a single iteration of relaxed PRS. 

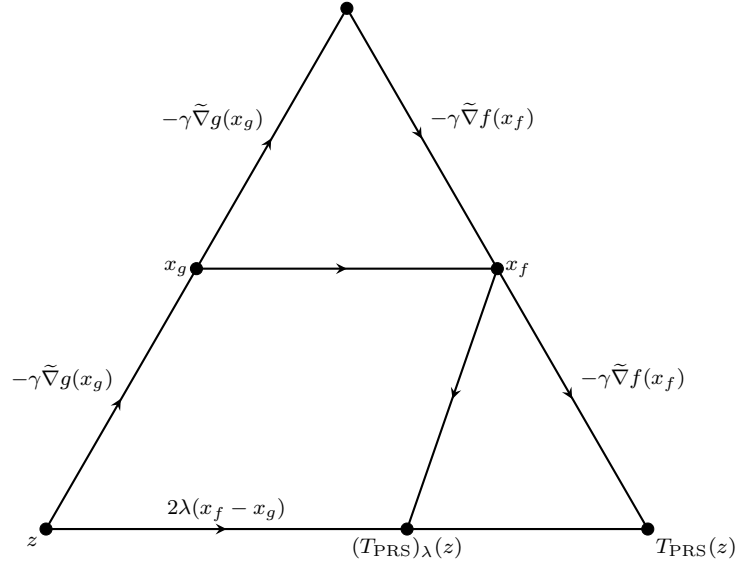
\begin{figure}[h!]
  \centering
    \begin{tikzpicture}[scale=2]

    \draw[directed, thick] (0, 0) -- (1, 1.7321);
    \draw[directed, thick] (1, 1.7321) -- (2, 3.4641);
    \draw[directed, thick] (2, 3.4641) -- (3, 1.7321);
    \draw[directed, thick] (3, 1.7321) -- (4, 0);
    \draw[directed, thick] (3, 1.7321) -- (2.4, 0);
    \draw[directed, thick] (0, 0) -- (2.4, 0);
    \draw[thick] (2.4, 0) -- (4, 0);
    \draw[directed, thick] (1, 1.7321) -- (3, 1.7321);
    \draw[fill] (0, 0) circle [radius=.040];
    \draw[fill] (4, 0) circle [radius=.040];
    \draw[fill] (2, 3.4641) circle [radius=.040];
    \draw[fill] (1, 1.7321) circle [radius=.040];
    \draw[fill] (3, 1.7321) circle [radius=.040];
    \draw[fill] (2.4000, 0) circle [radius=.040];
    \node [below left] at (0, 0) {$z$};
    \node [below right] at (4, 0) {$\TPRS (z)$};
    \node [below] at (2.4, 0) {$(\TPRS )_{\lambda}(z)$};
    \node [left] at (1, 1.7321) {$x_g$};
    \node [right] at (3, 1.7321) {$x_f$};
    \node [above left] at (.5, 0.8660) {$-\gamma \tnabla g(x_g)$};
    \node [above left] at (1.5, 2.5981) {$-\gamma \tnabla g(x_g)$};
    \node [above right] at (2.5, 2.5981) {$-\gamma \tnabla f(x_f)$};
    \node [above right] at (3.5, 0.8660) {$-\gamma \tnabla f(x_f)$};
    \node[above] at (1.2, 0) {$2\lambda(x_f - x_g)$};
    \end{tikzpicture}
    \caption{A single relaxed PRS iteration, from $z$ to $(\TPRS )_{\lambda}(z)$.}
    \label{fig:DRSTR}

\end{figure}

The way to read Figure~\ref{fig:DRSTR} is as follows: Given input $z$, relaxed PRS takes a \emph{backward--forward} step with respect to $g$,  then takes a \emph{backward--forward} step with respect to $f$, resulting in the point $\TPRS (z)$. (Refer to the discussion below \eqref{eq:backward} for the concepts of ``backward'' and ``forward.'') Finally, it averages the input and output: $(\TPRS )_{\lambda}(z) = (1-\lambda) z + \lambda \TPRS (z)$.  \cut{Notice that if $g$ is differentiable, the upper triangle in Figure~\ref{fig:DRSTR} is a single forward--backward step applied to $f$ and $g$.}

Lemma~\ref{prop:DRSmainidentity} summarizes and proves the identities depicted in Figure~\ref{fig:DRSTR}.
\begin{lemma}\label{prop:DRSmainidentity}
Let $z\in \cH$. Define auxiliary points $x_g := \prox_{\gamma g}(z)$ and $x_f := \prox_{\gamma f}(\refl_{\gamma g}(z))$. Then the identities hold:
\begin{align}
x_g = z - \gamma \tnabla g(x_g)  && \mathrm{and} && x_f &= x_g - \gamma \tnabla g(x_g) - \gamma \tnabla f(x_f). \label{prop:DRSmainidentity:f}
\end{align}
{where $\tnabla g(x_g) := (1/\gamma)(z - x_g) \in \partial g(x_g)$ and $\tnabla f(x_f) := (1/\gamma)(2x_g - z - x_f) \in \partial f(x_f)$.}
In addition, each relaxed PRS step has the following representation:
\begin{align}\label{eq:DRSmainidentity2}
(\TPRS )_{\lambda}(z) - z = 2\lambda(x_f- x_g) = -2  \lambda\gamma(\tnabla g(x_g)+\tnabla f(x_f)).
\end{align}
\end{lemma}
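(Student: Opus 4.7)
The plan is to derive all three identities as direct consequences of the optimality condition for the proximal operator (Lemma~\ref{lem:optimalityofprox}) and the algebraic definitions of $\refl_{\gamma g}$, $\TPRS$, and $(\TPRS)_\lambda$. Nothing deeper is needed; the task is really to unpack the definitions in the right order and identify the correct subgradient representatives.

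First I would apply Lemma~\ref{lem:optimalityofprox} to $x_g = \prox_{\gamma g}(z)$, which gives $(1/\gamma)(z - x_g) \in \partial g(x_g)$. Defining $\tnabla g(x_g) := (1/\gamma)(z - x_g)$ immediately yields the first identity in~\eqref{prop:DRSmainidentity:f}, namely $x_g = z - \gamma \tnabla g(x_g)$. Next, using the definition $\refl_{\gamma g}(z) = 2\prox_{\gamma g}(z) - z = 2x_g - z$, I rewrite $x_f = \prox_{\gamma f}(2x_g - z)$ and apply Lemma~\ref{lem:optimalityofprox} again to obtain $(1/\gamma)(2x_g - z - x_f) \in \partial f(x_f)$. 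Setting $\tnabla f(x_f) := (1/\gamma)(2x_g - z - x_f)$ gives $x_f = 2x_g - z - \gamma \tnabla f(x_f)$, and substituting $z = x_g + \gamma \tnabla g(x_g)$ from the first identity produces the second identity $x_f = x_g - \gamma \tnabla g(x_g) - \gamma \tnabla f(x_f)$.

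For the representation in~\eqref{eq:DRSmainidentity2}, I would compute $\TPRS(z)$ directly from its definition:
\begin{align*}
\TPRS(z) = \refl_{\gamma f}(\refl_{\gamma g}(z)) = 2\prox_{\gamma f}(2x_g - z) - (2x_g - z) = 2x_f - 2x_g + z,
\end{align*}
so $\TPRS(z) - z = 2(x_f - x_g)$. Since $(\TPRS)_\lambda = (1-\lambda)I_\cH + \lambda \TPRS$, we get $(\TPRS)_\lambda(z) - z = \lambda(\TPRS(z) - z) = 2\lambda(x_f - x_g)$. Substituting the second identity $x_f - x_g = -\gamma \tnabla g(x_g) - \gamma \tnabla f(x_f)$ then gives $2\lambda(x_f - x_g) = -2\lambda\gamma(\tnabla g(x_g) + \tnabla f(x_f))$, completing~\eqref{eq:DRSmainidentity2}.

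There is no real obstacle here; the lemma is bookkeeping that fixes canonical subgradient choices to match Figure~\ref{fig:DRSTR}. The only thing to be careful about is to verify that the two chosen subgradients $\tnabla g(x_g)$ and $\tnabla f(x_f)$ lie in the correct subdifferentials (handled by two applications of Lemma~\ref{lem:optimalityofprox}) and that one uses $\refl_{\gamma g}(z) = 2x_g - z$ consistently when plugging into the outer proximal operator.
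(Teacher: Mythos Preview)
Your proposal is correct and follows essentially the same approach as the paper's own proof: both derive the identities by applying the proximal optimality condition (Lemma~\ref{lem:optimalityofprox}, packaged in the paper as Equation~\eqref{lem:proxbackward:eq:main}) to $x_g$ and $x_f$, use $\refl_{\gamma g}(z)=2x_g-z$, and then compute $\TPRS(z)-z$ and average. Your write-up is slightly more explicit about the substitution $z=x_g+\gamma\tnabla g(x_g)$, but there is no substantive difference.
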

\begin{proof}
Figure~\ref{fig:DRSTR} provides an illustration of the identities.  Equation~(\ref{prop:DRSmainidentity:f}) follows from $\refl_{\gamma g}(z) = 2x_g - z =  x_g - \gamma \tnabla g(x_g)$ and Equation~\eqref{lem:proxbackward:eq:main}. Now, we can compute $\TPRS (z) - z$:
\begin{align*}
\TPRS (z) - z \stackrel{\eqref{prop:TPRS:eq:main}}{=} \refl_{\gamma f}(\refl_{\gamma g}(z)) - z = 2x_f - \refl_{\gamma g}(z)- z = 2x_f - (2x_g - z) - z &= 2(x_f - x_g).
\end{align*}
The subgradient identity in \eqref{eq:DRSmainidentity2} follows from (\ref{prop:DRSmainidentity:f}).  Finally, Equation~(\ref{eq:DRSmainidentity2}) follows from $(\TPRS )_{\lambda}(z)  - z = (1-\lambda)z + \lambda \TPRS (z) - z = \lambda(\TPRS (z) - z)$. \qed
\end{proof}

\begin{center}
\begin{table}
\centering
    \begin{tabular}{lll}
    \toprule
    Point    & Operator identity    &   Subgradient identity                   \\ \toprule
    $x_g^s$ & $=\prox_{\gamma g}(z^s)$ & $=z^s - \gamma \tnabla g(x_g^s)$ \\\midrule
    $x_f^s$ & $=\prox_{\gamma f}(\refl_{\gamma g}(z^s))$ & $=x_g^s - \gamma (\tnabla g(x_g^s) + \tnabla f(x_f^s))$  \\  \midrule
    $(\TPRS )_{\lambda}(z^s)$ & $=(1-\lambda) z^s + \lambda \TPRS (z^s)$ & $=z^s - 2\gamma \lambda (\tnabla g(x_g^s) + \tnabla f(x_f^s))$\\ \bottomrule
    \end{tabular}
 \caption{Overview of the main identities used throughout the paper. The letter $s$ denotes a superscript (e.g. $s = k$ or $s = \ast$). The vector $z^s \in \cH$ is an arbitrary input point.  See Lemma~\ref{prop:DRSmainidentity} for a proof.}\label{table:DRSidentities}
\end{table}

\end{center}

\subsection{Optimality conditions of relaxed PRS}
The following lemma characterizes the zeros of $\partial f + \partial g$  in terms of the fixed points of the PRS operator. The intuition is the following:  If $z^\ast$ is a fixed point of $\TPRS $, then the base of the triangle in Figure~\ref{fig:DRSTR} has length zero.  Thus, $x^\ast: = x_g^\ast = x_f^\ast$, and if we travel around the perimeter of the triangle, we will start and begin at $z^\ast$. This shows that $-2\gamma \tnabla g(x^\ast) = 2\gamma \tnabla f(x^\ast)$, i.e. $x^\ast \in \zer(\partial f +  \partial g)$.

\begin{lemma}[Optimality conditions of $\TPRS $]\label{lem:PRSoptimality}
The following identity holds:
\begin{align}
\zer(\partial f + \partial g) &= \{ \prox_{\gamma g}(z) \mid z \in \cH, \TPRS  z = z\}.\numberthis \label{eq:setoptimalitydrs}
\end{align}
That is, if $z^\ast$ is a fixed point of $\TPRS$, then $x^\ast = x_g^\ast = x_f^\ast$ is a solution to Problem~\ref{eq:simplesplit} and
\begin{align}\label{eq:gradoptimality}
z^\ast - x^\ast = \gamma \tnabla g(x^\ast) \in \gamma \partial g(x^\ast).
\end{align}
\end{lemma}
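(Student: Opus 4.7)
The plan is to prove the two set inclusions in \eqref{eq:setoptimalitydrs} separately, in both cases leveraging the subgradient representation of $\TPRS$ already established in Lemma~\ref{prop:DRSmainidentity} and the prox optimality condition from Lemma~\ref{lem:optimalityofprox}. The picture in Figure~\ref{fig:DRSTR} is the right mental model: a fixed point corresponds exactly to collapsing the base of the triangle to a single point, forcing the two ``slanted'' subgradient arrows to cancel, and hence producing an element of $\zer(\partial f + \partial g)$.

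For the inclusion $\supseteq$, I would start from a fixed point $z^\ast = \TPRS(z^\ast)$ and set $x_g^\ast := \prox_{\gamma g}(z^\ast)$, $x_f^\ast := \prox_{\gamma f}(\refl_{\gamma g}(z^\ast))$. Applying identity \eqref{eq:DRSmainidentity2} with $\lambda=1$ gives $0 = \TPRS(z^\ast) - z^\ast = 2(x_f^\ast - x_g^\ast)$, so $x_f^\ast = x_g^\ast$; call this common value $x^\ast$. The first identity in \eqref{prop:DRSmainidentity:f} then yields $z^\ast - x^\ast = \gamma \tnabla g(x^\ast) \in \gamma\partial g(x^\ast)$, which is exactly \eqref{eq:gradoptimality}. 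Plugging $x_f^\ast = x_g^\ast = x^\ast$ into the second identity in \eqref{prop:DRSmainidentity:f} collapses it to $0 = -\gamma(\tnabla g(x^\ast) + \tnabla f(x^\ast))$, and hence $0 \in \partial g(x^\ast) + \partial f(x^\ast)$, i.e.\ $x^\ast \in \zer(\partial f + \partial g)$.

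For the reverse inclusion $\subseteq$, I would take $x^\ast \in \zer(\partial f + \partial g)$ and pick $u \in \partial g(x^\ast)$ with $-u \in \partial f(x^\ast)$. Define the candidate fixed point $z^\ast := x^\ast + \gamma u$. Lemma~\ref{lem:optimalityofprox} immediately gives $\prox_{\gamma g}(z^\ast) = x^\ast$, since $(1/\gamma)(z^\ast - x^\ast) = u \in \partial g(x^\ast)$. Hence $\refl_{\gamma g}(z^\ast) = 2x^\ast - z^\ast = x^\ast - \gamma u$. Because $(1/\gamma)((x^\ast - \gamma u) - x^\ast) = -u \in \partial f(x^\ast)$, Lemma~\ref{lem:optimalityofprox} again gives $\prox_{\gamma f}(\refl_{\gamma g}(z^\ast)) = x^\ast$, so
\begin{equation*}
\TPRS(z^\ast) = \refl_{\gamma f}(\refl_{\gamma g}(z^\ast)) = 2x^\ast - (x^\ast - \gamma u) = x^\ast + \gamma u = z^\ast.
\end{equation*}
Thus $z^\ast$ is a fixed point of $\TPRS$ with $\prox_{\gamma g}(z^\ast) = x^\ast$, proving the inclusion.

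The only delicate point is that the reverse direction requires choosing a specific subgradient representation of $0 \in \partial f(x^\ast) + \partial g(x^\ast)$ and then constructing $z^\ast$ from it; this is routine once one realizes that the construction is essentially reading Figure~\ref{fig:DRSTR} backwards. No inequality estimates are needed anywhere — everything follows from the exact algebraic identities in Lemma~\ref{prop:DRSmainidentity} together with the prox optimality characterization.
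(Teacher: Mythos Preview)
Your proof is correct and complete. Both inclusions are handled cleanly, and the verification steps using Lemma~\ref{lem:optimalityofprox} and the identities~\eqref{prop:DRSmainidentity:f}--\eqref{eq:DRSmainidentity2} from Lemma~\ref{prop:DRSmainidentity} are accurate.

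The paper takes a different route: it simply cites \cite[Proposition~25.1]{bauschke2011convex} for the set identity~\eqref{eq:setoptimalitydrs} and dispatches~\eqref{eq:gradoptimality} in one line via the prox optimality condition. Your argument is more self-contained --- it formalizes precisely the geometric intuition the paper sketches in the paragraph preceding the lemma (the ``collapsing triangle'' picture), using only tools already developed in the paper. What you gain is independence from the external reference; what the paper's approach gains is brevity. Both are perfectly valid, and in fact your direct argument is arguably preferable in an expository sense since it shows the reader exactly how the fixed-point characterization falls out of the subgradient identities in Lemma~\ref{prop:DRSmainidentity}.
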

\begin{proof}
See \cite[Proposition 25.1]{bauschke2011convex} for the proof of Equation~\eqref{eq:setoptimalitydrs}. Equation~\eqref{eq:gradoptimality} follows because $x^\ast = \prox_{\gamma g}(z^\ast)$ if, and only if, $z^\ast - x^\ast \in \gamma \partial g(x^\ast)$.\qed
%
\end{proof}

\subsection{Fundamental inequalities}\label{sec:fundamentalinequalities}

We now  deduce inequalities on the objective function $f + g$. In particular, we compute upper and lower bounds of the quantities $f(x_f^k) + g(x_g^k) - g(x^\ast) - f(x^\ast)$.  Note that $x_f^k$ and  $x_g^k$ are not necessarily equal, so this quantity can be negative.

The most important properties of the inequalities we establish below are:
\begin{enumerate}
\item The upper fundamental inequality has a telescoping structure in $z^k $ and $z^{k+1}$.
\item They can be bounded in terms of $\|z^{k+1} - z^k\|^2$.
\end{enumerate}
Properties 1 and 2 will be used to deduce ergodic and nonergodic rates, respectively.

\begin{proposition}[Upper fundamental inequality]\label{prop:DRSupper}
Let $z \in \cH$, let $z^+ := (\TPRS)_{\lambda}(z)$, and let $x_f$ and $x_g$ be defined as in Lemma~\ref{prop:DRSmainidentity}. Then for all $x\in \dom(f) \cap \dom(g)$
\begin{align*}
4\gamma\lambda(f(x_f) + g(x_g) &- f(x) - g(x)) \leq \|z - x\|^2 - \|z^+ - x\|^2 + \left(1 - \frac{1}{\lambda}\right)\|z^{+} - z\|^2. \numberthis \label{prop:DRSupper:eq:main}
\end{align*}
\end{proposition}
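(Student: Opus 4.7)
The plan is to combine two subgradient inequalities with the algebraic identities from Lemma~\ref{prop:DRSmainidentity}, and then recognize the resulting bound as the cosine-rule expansion of $\|z-x\|^2 - \|z^+-x\|^2$ plus a correction of the form $(1-1/\lambda)\|z^+-z\|^2$.

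\textbf{Step 1 (subgradient inequalities).} Since $\tnabla f(x_f)\in\partial f(x_f)$ and $\tnabla g(x_g)\in\partial g(x_g)$, convexity gives, for any $x\in\dom(f)\cap\dom(g)$,
\begin{align*}
f(x_f)-f(x) &\leq \dotp{x_f-x,\tnabla f(x_f)},\\
g(x_g)-g(x) &\leq \dotp{x_g-x,\tnabla g(x_g)}.
\end{align*}
Multiplying by $\gamma$ and substituting the identities $\gamma\tnabla g(x_g)=z-x_g$ and $\gamma\tnabla f(x_f)=2x_g-z-x_f$ from Lemma~\ref{prop:DRSmainidentity}, we obtain
\begin{align*}
\gamma\bigl(f(x_f)+g(x_g)-f(x)-g(x)\bigr) &\leq \dotp{x_f-x,\,2x_g-z-x_f}+\dotp{x_g-x,\,z-x_g}.
\end{align*}

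\textbf{Step 2 (algebraic simplification).} I would next expand the right-hand side. Treating $x_f,x_g,z,x$ as free vectors, a direct expansion (grouping the $x$-terms and applying $-\|x_f\|^2-\|x_g\|^2+2\dotp{x_f,x_g}=-\|x_f-x_g\|^2$) yields the compact identity
\begin{align*}
\dotp{x_f-x,\,2x_g-z-x_f}+\dotp{x_g-x,\,z-x_g} &= -\|x_f-x_g\|^2-\dotp{z-x,\,x_f-x_g}.
\end{align*}
Thus, multiplying through by $4\lambda$,
\begin{align*}
4\gamma\lambda\bigl(f(x_f)+g(x_g)-f(x)-g(x)\bigr) &\leq -4\lambda\|x_f-x_g\|^2-4\lambda\dotp{z-x,\,x_f-x_g}.
\end{align*}

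\textbf{Step 3 (rewriting via $z^+-z$).} The identity $z^+-z=2\lambda(x_f-x_g)$ from \eqref{eq:DRSmainidentity2} lets me rewrite the right-hand side in terms of $z$ and $z^+$. Expanding $\|z^+-x\|^2=\|z-x+2\lambda(x_f-x_g)\|^2$ gives
\begin{align*}
\|z-x\|^2-\|z^+-x\|^2 &= -4\lambda\dotp{z-x,\,x_f-x_g}-4\lambda^2\|x_f-x_g\|^2,
\end{align*}
while $(1-1/\lambda)\|z^+-z\|^2=4\lambda(\lambda-1)\|x_f-x_g\|^2$. Adding these two quantities produces exactly $-4\lambda\dotp{z-x,x_f-x_g}-4\lambda\|x_f-x_g\|^2$, which matches the upper bound from Step 2. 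Combining the two completes the proof.

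The only step that requires any care is the algebraic simplification in Step~2; the rest is bookkeeping. Notice that no assumption on $\lambda$ beyond $\lambda\neq 0$ is used, and $x$ need not be a fixed point of $\TPRS$ or a minimizer of $f+g$, so the inequality is fully general.
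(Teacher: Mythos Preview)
Your proof is correct and follows essentially the same approach as the paper: both start from the two subgradient inequalities, invoke the identities of Lemma~\ref{prop:DRSmainidentity}, and reduce everything to the cosine-rule expansion of $\|z-x\|^2-\|z^+-x\|^2$. The only cosmetic difference is that you substitute $\gamma\tnabla f(x_f)=2x_g-z-x_f$ and $\gamma\tnabla g(x_g)=z-x_g$ immediately and simplify to the intermediate form $-\|x_f-x_g\|^2-\dotp{z-x,x_f-x_g}$, whereas the paper keeps the subgradients symbolic a bit longer and regroups into a single inner product $2\dotp{z^+-z,\,x-\tfrac{1}{2\lambda}(z^+-z)-z}$ before expanding; the computations are equivalent.
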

\begin{proof}
We use the subgradient inequality and \eqref{eq:DRSmainidentity2} multiple times in the following derivation:
\begin{align*}
4\gamma\lambda(f(x_f) + g(x_g) - f(x) - g(x)) & \le 4\lambda \gamma \left( \dotp{ x_f - x, \tnabla f(x_f)} + \dotp{ x_g - x, \tnabla g(x_g) } \right)\\
&= 4\lambda\gamma \left( \dotp{ x_f-x_g,\tnabla f(x_f) } + \dotp{ x_g - x, \tnabla f(x_f) +\tnabla g(x_g) }\right)\\
& = 2\left( \dotp{ z^+-z,\gamma \tnabla f(x_f) } + \dotp{ x-x_g, z^+-z} \right)\\
& = 2 \dotp{ z^+-z, x+(z-x_g+\gamma \tnabla f(x_f)) - z }\\
& = 2 \dotp{ z^+ -z , x+ \gamma (\tnabla g(x_g) + \tnabla f(x_f))-z }\\
& = 2\dotp{ z^+ - z, x-\frac{1}{2\lambda} (z^+-z) - z }\\
&= \|z - x\|^2 - \|z^+ - x\|^2 + \left(1 - \frac{1}{\lambda}\right)\|z^{+} - z\|^2. \qquad \qed
\end{align*}
\end{proof}

\begin{proposition}[Lower fundamental inequality]\label{prop:DRSlower}
Let $z^\ast$ be a fixed point of $\TPRS $ and let $x^\ast := \prox_{\gamma g}(z^\ast)$.  Then for all $x_f \in  \dom(f)$ and $x_g\in \dom(g)$, the lower bound holds:
\begin{align*}
f(x_f) + g(x_g) - f(x^\ast) - g(x^\ast) &\geq \frac{1}{\gamma }\dotp{x_g - x_f,  z^\ast - x^\ast}. \numberthis \label{prop:DRSlower:eq:main}
\end{align*}
\end{proposition}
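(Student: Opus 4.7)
The plan is to proceed directly from the subgradient inequality applied to $f$ at $x_f$ and to $g$ at $x_g$, both expanded around the common reference point $x^\ast$. The key algebraic fact I will exploit is that at a fixed point $z^\ast$ of $\TPRS$, Lemma~\ref{lem:PRSoptimality} gives $x^\ast = \prox_{\gamma g}(z^\ast) = \prox_{\gamma f}(\refl_{\gamma g}(z^\ast))$, so $x^\ast$ is the common ``apex'' of the degenerate triangle in Figure~\ref{fig:DRSTR}. This collapses the two subgradients drawn at $x^\ast$ into a single vector, up to sign.

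First I would invoke Equation~\eqref{eq:gradoptimality} to fix a particular subgradient $\tnabla g(x^\ast) = (z^\ast - x^\ast)/\gamma \in \partial g(x^\ast)$. Then, using $\zer(\partial f + \partial g) \ni x^\ast$ (Lemma~\ref{lem:PRSoptimality}) together with the identity $\tnabla f(x^\ast) = (x^\ast - z^\ast)/\gamma$ that comes from the subgradient form of the $f$-prox step in Table~\ref{table:DRSidentities} evaluated at $z^\ast$, I obtain a subgradient $\tnabla f(x^\ast) \in \partial f(x^\ast)$ satisfying $\tnabla f(x^\ast) + \tnabla g(x^\ast) = 0$.

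Next, I would write out the two subgradient inequalities
\begin{align*}
g(x_g) &\ge g(x^\ast) + \dotp{x_g - x^\ast,\, \tnabla g(x^\ast)}, \\
f(x_f) &\ge f(x^\ast) + \dotp{x_f - x^\ast,\, \tnabla f(x^\ast)},
\end{align*}
add them, and substitute the explicit expressions for the two subgradients. The $x^\ast$-contributions cancel because the two subgradients are negatives of each other, and what remains simplifies to $\frac{1}{\gamma}\dotp{x_g - x_f,\, z^\ast - x^\ast}$, which is exactly~\eqref{prop:DRSlower:eq:main}.

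There is no real obstacle here; the only subtlety is being careful that $\tnabla f(x^\ast)$ and $\tnabla g(x^\ast)$ refer to the \emph{specific} subgradients determined by the PRS machinery at $z^\ast$ (so that they sum to zero), rather than arbitrary elements of the subdifferentials. Once that is pinned down via Lemma~\ref{lem:PRSoptimality} and the identities of Lemma~\ref{prop:DRSmainidentity} applied at $z = z^\ast$, the proof is a two-line addition of subgradient inequalities.
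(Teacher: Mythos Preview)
Your proposal is correct and is essentially identical to the paper's proof: both fix the specific subgradients $\tnabla g(x^\ast) = (z^\ast - x^\ast)/\gamma$ and $\tnabla f(x^\ast) = -\tnabla g(x^\ast)$ at the fixed point, apply the subgradient inequality for $f$ and $g$ separately, and add. The paper merely decomposes $\dotp{x_g - x^\ast, \tnabla g(x^\ast)}$ as $\dotp{x_g - x_f, \tnabla g(x^\ast)} + \dotp{x_f - x^\ast, \tnabla g(x^\ast)}$ before adding, which is a cosmetic difference.
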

\begin{proof}
This proof essentially follows from the subgradient inequality.  Indeed, let $\tnabla g(x^\ast) = (z^\ast - x^\ast)/\gamma \in \partial g(x^\ast)$ and let $\tnabla f(x^\ast) = -\tnabla g(x^\ast) \in \partial f(x^\ast)$. Then the result follows by adding the following equations:
\begin{align*}
f(x_f) - f(x^\ast) & \geq \dotp{x_f - x^\ast, \tnabla f(x^\ast)},\\ 
g(x_g) - g(x^\ast) 
&\geq \dotp{ x_g - x_f, \tnabla g(x^\ast)} + \dotp{x_f - x^\ast, \tnabla g(x^\ast)}. \qquad \qed 
\end{align*}
\end{proof}

\section{Objective convergence rates}\label{sec:DRSconvergence}
In this section we will prove ergodic and nonergodic convergence rates of relaxed PRS when $f$ and $g$ are closed, proper, and convex functions that are possibly nonsmooth.

To ease notational memory, we note that the reader may assume that $\lambda_k = (1/2)$ for all $k \geq 0$.  This simplification implies that  $\Lambda_k = (1/2) (k+1)$, and $\tau_k = \lambda_k(1-\lambda_k)= (1/4)$ for all $k \geq 0$.

Throughout this section the point $z^\ast$  denotes an arbitrary fixed point of $\TPRS$, and we  define a minimizer of $f+g$ by the formula (Lemma~\ref{lem:PRSoptimality}):
\begin{align*}
x^\ast = \prox_{\gamma g}(z^\ast).
\end{align*}
The constant $(1/\gamma)\|z^\ast - x^\ast\|$ appears in the bounds of this section. This term is independent of $\gamma$: For any fixed point $z^\ast$ of $\TPRS$, the point $x^\ast = \prox_{\gamma g}(z^\ast)$ is a minimizer and $z^\ast - \prox_{\gamma g}(z^\ast) = \gamma \tnabla g(x^\ast) \in \gamma \partial g(x^\ast)$. Conversely, if $x^\ast \in \zer(\partial f + \partial g)$ and $\tnabla g(x^\ast) \in (-\partial f(x^\ast)) \cap \partial g(x^\ast)$, then $z^\ast = x^\ast + \gamma \tnabla g(x^\ast)$ is a fixed point. {Note that in all of our bounds, we can always replace $(1/\gamma)\|z^\ast - x^\ast\| = \|\tnabla g(x^\ast)\|$ by the infimum $\inf_{z^\ast \in \Fix(\TPRS)} (1/\gamma)\|z^\ast - x^\ast\|$ (although the infimum might not be attained).}

\subsection{Ergodic convergence rates}\label{sec:ergodic}

In this section, we analyze the ergodic convergence of relaxed PRS. The proof follows the telescoping property of the upper and lower fundamental inequalities and an application of Jensen's inequality.

\begin{theorem}[Ergodic convergence of relaxed PRS]\label{thm:drsergodic}
For all $k \geq 0$, let $\lambda_k \in (0, 1]$. Then we have the following convergence rate
\begin{align*}
- \frac{2\|z^0 - z^\ast\|\|z^\ast - x^\ast\|}{\gamma\Lambda_k} \leq f(\overline{x}_f^k) + g(\overline{x}_g^k) - f(x^\ast) - g(x^\ast) \leq \frac{1}{4\gamma\Lambda_k}\|z^0 - x^\ast\|^2.
\end{align*}
In addition, the following feasibility bound holds:
\begin{align}\label{thm:drsergodic:eq:feasibility}
\|\overline{x}_g^k - \overline{x}_f^k\|& \leq \frac{2\|z^0 - z^\ast\|}{\Lambda_k}.
\end{align}
\end{theorem}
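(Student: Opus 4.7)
The plan is to derive each of the three bounds by applying the fundamental inequalities of Section~\ref{sec:fundamentalinequalities} in conjunction with a telescoping sum, Jensen's inequality, and the Fej\'er monotonicity of $(z^j)_{j\ge 0}$ with respect to $\Fix(\TPRS)$ (Part~\ref{prop:averagedconvergence:eq:mono} of Theorem~\ref{prop:averagedconvergence}).

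For the upper bound, I would apply Proposition~\ref{prop:DRSupper} with $x=x^\ast$, $z=z^i$, and $z^+=z^{i+1}$ for each $i=0,\dots,k$. Since $\lambda_i\in(0,1]$ gives $1-1/\lambda_i\le 0$, that term can be dropped, leaving
\[
4\gamma\lambda_i\bigl(f(x_f^i)+g(x_g^i)-f(x^\ast)-g(x^\ast)\bigr)\le \|z^i-x^\ast\|^2-\|z^{i+1}-x^\ast\|^2.
\]
Telescoping over $i=0,\dots,k$ and discarding the nonnegative final term $\|z^{k+1}-x^\ast\|^2$ gives $4\gamma\sum_{i=0}^k\lambda_i\bigl(f(x_f^i)+g(x_g^i)-f(x^\ast)-g(x^\ast)\bigr)\le\|z^0-x^\ast\|^2$. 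Dividing by $4\gamma\Lambda_k$ and invoking Jensen's inequality (separately for $f$ evaluated at $\overline{x}_f^k$ and $g$ evaluated at $\overline{x}_g^k$) produces the stated upper bound.

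For the feasibility estimate, the key identity is \eqref{eq:DRSmainidentity2}, which gives $z^{i+1}-z^i = 2\lambda_i(x_f^i-x_g^i)$. Summing over $i=0,\dots,k$ yields
\[
z^{k+1}-z^0 = 2\sum_{i=0}^k\lambda_i(x_f^i-x_g^i)=2\Lambda_k(\overline{x}_f^k-\overline{x}_g^k).
\]
Therefore $\|\overline{x}_g^k-\overline{x}_f^k\|=\|z^{k+1}-z^0\|/(2\Lambda_k)$, and applying the triangle inequality together with Fej\'er monotonicity $\|z^{k+1}-z^\ast\|\le\|z^0-z^\ast\|$ gives $\|z^{k+1}-z^0\|\le 2\|z^0-z^\ast\|$, which yields \eqref{thm:drsergodic:eq:feasibility}.

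Finally, for the lower bound, I would apply the lower fundamental inequality of Proposition~\ref{prop:DRSlower} directly at the averaged points $x_f=\overline{x}_f^k\in\dom(f)$ and $x_g=\overline{x}_g^k\in\dom(g)$, yielding
\[
f(\overline{x}_f^k)+g(\overline{x}_g^k)-f(x^\ast)-g(x^\ast)\ge \frac{1}{\gamma}\dotp{\overline{x}_g^k-\overline{x}_f^k,\,z^\ast-x^\ast}.
\]
Combining Cauchy--Schwarz with the feasibility bound just proved gives the lower bound $-(2\|z^0-z^\ast\|\|z^\ast-x^\ast\|)/(\gamma\Lambda_k)$. The mild subtlety here, and the only step that requires care, is resisting the temptation to apply Proposition~\ref{prop:DRSlower} at the iterates $x_f^i,x_g^i$ and then average—the resulting bound would be on the averaged objective gap, not on the objective gap at the averaged points, and Jensen's inequality points the wrong way. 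Applying the lower inequality directly at the averages sidesteps this issue.
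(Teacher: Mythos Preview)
Your proof is correct and follows essentially the same approach as the paper: telescope the upper fundamental inequality (Proposition~\ref{prop:DRSupper}) and apply Jensen for the upper bound, apply the lower fundamental inequality (Proposition~\ref{prop:DRSlower}) directly at the averaged points combined with Cauchy--Schwarz for the lower bound, and obtain the feasibility estimate from the telescoping identity $z^{k+1}-z^0=2\Lambda_k(\overline{x}_f^k-\overline{x}_g^k)$ together with Fej\'er monotonicity. The only cosmetic difference is that the paper packages the feasibility argument as an appeal to Theorem~\ref{eq:ergodicFPR}, whereas you carry out that telescoping directly; in fact your computation yields $\|\overline{x}_g^k-\overline{x}_f^k\|\le \|z^0-z^\ast\|/\Lambda_k$, a factor of two sharper than the stated bound.
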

\begin{proof}

Equation~\eqref{thm:drsergodic:eq:feasibility} follows directly from Theorem~\ref{eq:ergodicFPR} because $(z^j)_{j \geq 0}$ is Fej{\'e}r monotone with respect to $\Fix(T)$ and for all $k \geq 0$, we have $z^{k+1} - z^k = \lambda_k(x_f^k - x_g^k)$.

Recall the upper fundamental inequality from Proposition~\ref{prop:DRSupper} :
\begin{align}\label{thm:drsergodic:eq:upper}
4\gamma\lambda_k(f(x_f^k) + g(x_g^k) - f(x^\ast) - g(x^\ast) ) &\leq \|z^{k} - x^\ast\|^2 - \|z^{k+1} - x^\ast\|^2 + \left(1 - \frac{1}{\lambda_k}\right)\|z^{k+1} - z^k\|^2.
\end{align}
Because $\lambda_k \leq 1$, it follows that $(1-(1/\lambda_k)) \leq 0$.  Thus, we sum Equation~(\ref{thm:drsergodic:eq:upper}) from $i = 0$ to $k$, divide by $\Lambda_k$, and apply Jensen's inequality to get
\begin{align*}
\frac{1}{4\gamma\Lambda_k}(\|z^0 - x^\ast\|^2 - \|z^{k+1} - x^\ast\|^2) &\geq \frac{1}{\Lambda_k}\sum_{i=0}^k\lambda_i(f(x_f^i) + g(x_g^i) - f(x^\ast) - g(x^\ast)) \\
&\geq f(\overline{x}_f^k) + g(\overline{x}_g^k) - f(x^\ast) - g(x^\ast) .
\end{align*}

The lower bound is a consequence of the fundamental lower inequality and Equation~\eqref{thm:drsergodic:eq:feasibility}
\begin{align*}
f(\overline{x}_f^k) + g(\overline{x}_g^k) - f(x^\ast) - g(x^\ast) &\stackrel{\eqref{prop:DRSlower:eq:main}}{\geq} \frac{1}{\gamma}\dotp{\overline{x}_g^k - \overline{x}_f^k, z^\ast - x^\ast}  \numberthis \label{thm:drsergodic:eq:upper3} \stackrel{\eqref{thm:drsergodic:eq:feasibility}}{\geq} - \frac{2\|z^0 - z^\ast\|\|z^\ast - x^\ast\|}{\gamma\Lambda_k}. \qquad \qed
\end{align*}
\end{proof}

In general, $x_f^k \notin\dom(g)$ and $x_g^k \notin \dom(f)$, so we cannot evaluate $g$ at $x_f^k$ or $f$ at $x_g^k$. However, the conclusion of Theorem~\ref{thm:drsergodic} can be improved if $f$ or $g$ is Lipschitz continuous. The following proposition gives a sufficient condition for Lipschitz continuity on a ball:  

\begin{proposition}[Lipschitz continuity on a ball]\label{prop:lipschitzCont}
Suppose that $f : \cH \rightarrow (-\infty, \infty]$ is proper and convex.  Let $\rho > 0$ and let $x_0 \in \cH$.  If $\delta = \sup_{x, y \in B(x_0, 2\rho)} |f(x) - f(y)| < \infty$, then $f$ is $({\delta}/{\rho})$-Lipschitz on $B(x_0, \rho)$.
\end{proposition}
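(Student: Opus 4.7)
The plan is to prove the proposition via a standard geometric construction that extends the chord through two points of $B(x_0,\rho)$ to the larger ball $B(x_0,2\rho)$ and then invokes convexity of $f$. The key observation is that any point on the segment $[x,y]$ with $x,y \in B(x_0,\rho)$ can be extended a further distance $\rho$ in the direction of $y$ while remaining inside $B(x_0,2\rho)$, and convexity turns this geometric relation into a Lipschitz-type bound.

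First I would fix $x, y \in B(x_0, \rho)$ with $x \neq y$ and define the extended point
\begin{equation*}
z := y + \frac{\rho}{\|y - x\|}(y - x).
\end{equation*}
Then I would check that $z \in B(x_0, 2\rho)$: indeed $\|z - y\| = \rho$, so by the triangle inequality $\|z - x_0\| \leq \|z - y\| + \|y - x_0\| \leq \rho + \rho = 2\rho$. Next I would verify the convex combination identity
\begin{equation*}
y = \alpha\, z + (1-\alpha)\, x, \qquad \alpha := \frac{\|y-x\|}{\|y-x\| + \rho} \in [0,1),
\end{equation*}
which follows by direct substitution using the definition of $z$.

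Applying convexity of $f$ and then the boundedness hypothesis (noting $x, z \in B(x_0, 2\rho)$) gives
\begin{equation*}
f(y) - f(x) \leq \alpha\bigl(f(z) - f(x)\bigr) \leq \alpha\,\delta \leq \frac{\|y-x\|}{\rho}\,\delta,
\end{equation*}
where the last inequality uses $\|y-x\| + \rho \geq \rho$. Swapping the roles of $x$ and $y$ (the construction is symmetric) yields $|f(x) - f(y)| \leq (\delta/\rho)\|x - y\|$, which is the claimed Lipschitz bound on $B(x_0, \rho)$.

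There is no real obstacle here; the proof is essentially a one-line geometric argument once the extension point $z$ is identified. The only minor subtlety to double-check is that $z$ indeed lies in $B(x_0, 2\rho)$ (not merely in the closed ball of some larger radius), but this is immediate from $\|y - x_0\| \leq \rho$. The argument also works verbatim in an infinite-dimensional Hilbert space since it uses only the triangle inequality and convexity; no compactness or finite dimensionality is invoked.
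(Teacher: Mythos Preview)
Your proof is correct and is the standard geometric argument for this result. The paper does not actually supply a proof; it simply cites \cite[Proposition 8.28]{bauschke2011convex}, whose argument is essentially the one you wrote out.
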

\begin{proof}
See \cite[Proposition 8.28]{bauschke2011convex}.
\qed\end{proof}

To use this fact, we need to show that the sequences $(x_f^j)_{j \geq 0}$, and $(x_g^j)_{j \geq 0}$ are bounded.  Recall that $x_g^s = \prox_{\gamma g}(z^s)$ and $x_f^s = \prox_{\gamma f}(\refl_{\gamma g}(z^s))$, for $s \in \{\ast, k\}$. Proximal and reflection maps are nonexpansive, so we have the following simple bound: 
\begin{align*}
\max\{ \|x_f^k - x^\ast\|, \|x_g^k - x^\ast\|\} &\leq \|z^k - z^\ast\| \leq \|z^0 - z^\ast\|.
\end{align*}
Thus, $(x_f^j)_{j \geq 0}, (x_g^j)_{j \geq 0} \subseteq \overline{B(x^\ast, \|z^0 - z^\ast\|)}.$ {By the convexity of the closed ball, we also have $(\overline{x}_f^j)_{j \geq 0}, (\overline{x}_g^j)_{j \geq 0} \subseteq \overline{B(x^\ast, \|z^0 - z^\ast\|)}.$}

\begin{corollary}[Ergodic convergence with single Lipschitz function]\label{cor:drsergodiclipschitz}
Let the notation be as in Theorem~\ref{thm:drsergodic}. Suppose that $f$ (respectively $g$) is $L$-Lipschitz continuous on $\overline{B(x^\ast, \|z^0 - z^\ast\|)}$, and let $x^k = x_g^k$ (respectively $x^k = x_f^k$). Then the following convergence rate holds
\begin{align*}
0 &\leq f(\overline{x}^k) + g(\overline{x}^k) - f(x^\ast) - g(x^\ast) \leq  \frac{1}{4\gamma\Lambda_k}\|z^0 - x^\ast\|^2 + \frac{2L\|z^0 - z^\ast\|}{\Lambda_k}.
\end{align*}
\end{corollary}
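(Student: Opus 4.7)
The plan is to bootstrap Theorem~\ref{thm:drsergodic} by replacing the objective evaluated at two different ergodic averages $\overline{x}_f^k$ and $\overline{x}_g^k$ with the objective evaluated at a single one, paying a Lipschitz penalty that is controlled by the feasibility bound \eqref{thm:drsergodic:eq:feasibility}. I will treat the case $x^k = x_g^k$ with $f$ being $L$-Lipschitz on $\overline{B(x^\ast,\|z^0-z^\ast\|)}$; the symmetric case is identical.

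First I would verify that both ergodic averages lie in the ball where $f$ is Lipschitz. As noted in the discussion preceding the corollary, nonexpansiveness of $\prox_{\gamma g}$ and $\refl_{\gamma g}$ gives $\|x_f^j - x^\ast\|, \|x_g^j-x^\ast\| \le \|z^j-z^\ast\| \le \|z^0-z^\ast\|$ for every $j$, and since $\overline{B(x^\ast,\|z^0-z^\ast\|)}$ is convex, the averages $\overline{x}_f^k$ and $\overline{x}_g^k$ belong to this ball as well. Therefore the Lipschitz hypothesis applies and
\[
|f(\overline{x}_g^k) - f(\overline{x}_f^k)| \;\le\; L\,\|\overline{x}_g^k - \overline{x}_f^k\|.
\]

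Next I would decompose
\[
f(\overline{x}_g^k)+g(\overline{x}_g^k) \;=\; \bigl(f(\overline{x}_f^k)+g(\overline{x}_g^k)\bigr) + \bigl(f(\overline{x}_g^k)-f(\overline{x}_f^k)\bigr),
\]
apply the upper bound from Theorem~\ref{thm:drsergodic} to the first parenthesised term (giving $\tfrac{1}{4\gamma\Lambda_k}\|z^0-x^\ast\|^2$ after subtracting $f(x^\ast)+g(x^\ast)$), and bound the second parenthesised term by $L\|\overline{x}_g^k-\overline{x}_f^k\|$, which in turn is at most $2L\|z^0-z^\ast\|/\Lambda_k$ by the feasibility estimate \eqref{thm:drsergodic:eq:feasibility}. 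Adding these bounds yields the stated upper estimate.

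For the lower bound, I would invoke Lemma~\ref{lem:PRSoptimality} to note $x^\ast \in \zer(\partial f + \partial g)$, so there exist subgradients $u\in\partial f(x^\ast)$, $v\in\partial g(x^\ast)$ with $u+v=0$; summing the two subgradient inequalities applied at $\overline{x}^k$ gives $f(\overline{x}^k)+g(\overline{x}^k) \ge f(x^\ast)+g(x^\ast)$. There is no real obstacle here; the only point that deserves care is keeping track of which average lies in the Lipschitz ball, but this is immediate from convexity of the ball and the uniform bound $\|z^k-z^\ast\| \le \|z^0-z^\ast\|$ already established in Part~\ref{prop:averagedconvergence:eq:mono} of Theorem~\ref{prop:averagedconvergence}.
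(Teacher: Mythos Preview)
Your proposal is correct and matches the paper's proof essentially line for line: verify the ergodic averages lie in $\overline{B(x^\ast,\|z^0-z^\ast\|)}$, use the Lipschitz bound $|f(\overline{x}_g^k)-f(\overline{x}_f^k)|\le L\|\overline{x}_g^k-\overline{x}_f^k\|$ together with the feasibility estimate~\eqref{thm:drsergodic:eq:feasibility}, and then apply the upper bound of Theorem~\ref{thm:drsergodic}. Your justification of the lower bound via subgradients is slightly more explicit than the paper's (which simply uses that $x^\ast$ minimizes $f+g$), but the content is the same.
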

\begin{proof}
From Equation~(\ref{thm:drsergodic:eq:feasibility}),  we have  $\|\overline{x}_g^k - \overline{x}_f^k\| \leq (2/\Lambda_k)\|z^0 - z^\ast\|$. In addition, $(x_f^j)_{j \geq 0}, (x_g^j)_{j \geq 0} \subseteq \overline{B(x^\ast, \|z^0 - z^\ast\|)}$. Thus, it follows that
\begin{align*}
0 \leq f(\overline{x}^k) + g(\overline{x}^k) - f(x^\ast) - g(x^\ast) &\leq f(\overline{x}_f^k) + g(\overline{x}_g^k) - f(x^\ast) - g(x^\ast)  + L\|\overline{x}_f^k - \overline{x}_g^k\|\\
&\stackrel{\eqref{thm:drsergodic:eq:feasibility}}{\leq} f(\overline{x}_f^k) + g(\overline{x}_g^k) - f(x^\ast) - g(x^\ast)  + \frac{2L \|z^0 - z^\ast\|}{\Lambda_k}. 
\end{align*}
The upper bound follows from this equation and Theorem~\ref{thm:drsergodic}.
\qed\end{proof}

\subsection{Nonergodic convergence rates}\label{sec:DRSnonergodic}

In this section, we prove the nonergodic convergence rate of the Algorithm~\ref{alg:DRS} whenever $\underline{\tau}  := \inf_{j \geq 0} \tau_j > 0$.  The proof uses Theorem~\ref{prop:averagedconvergence} to bound the fundamental inequalities in Propositions~\ref{prop:DRSupper} and~\ref{prop:DRSlower}.

\begin{theorem}[Nonergodic convergence of relaxed PRS]\label{thm:drsnonergodic}
 For all $k \geq 0$, let $\lambda_k \in (0, 1)$. Suppose that $\underline{\tau} := \inf_{j \geq 0} \lambda_k(1-\lambda_k) > 0$. Then we have the convergence rates:
 \begin{enumerate}
\item \label{thm:drsnonergodic:part:general} In general, we have the bounds:
\begin{align*}
-\frac{\|z^0 - z^\ast\|\|z^\ast - x^\ast\|}{2\gamma \sqrt{\underline{\tau}(k+1)}} \leq f(x_f^k) + g(x_g^k) - f(x^\ast) - g(x^\ast) \leq \frac{(\|z^0 - z^\ast\| + \|z^\ast - x^\ast\|)\|z^0 - z^\ast\|}{2\gamma \sqrt{\underline{\tau}(k+1)}}
\end{align*}
and $|f(x_f^k) + g(x_g^k) - f(x^\ast) - g(x^\ast) | = o\left({1}/{\sqrt{k+1}}\right).$
\item \label{thm:drsnonergodic:part:1d} If $\cH = \vR$  and $\lambda_k \equiv {1}/{2}$, then for all $k \geq 0$,
\begin{align*}
\frac{\|z^0 - z^\ast\|\|z^\ast - x^\ast\|}{\sqrt{2}\gamma (k+1)} \leq f(x_f^{k+1}) + g(x_g^{k+1}) - f(x^\ast) - g(x^\ast) \leq \frac{(\|z^0 - z^\ast\| + \|z^\ast - x^\ast\|)\|z^0 - z^\ast\|}{\sqrt{2}\gamma (k+1)} 
\end{align*}
and $|f(x_f^{k+1}) + g(x_g^{k+1}) - f(x^\ast) - g(x^\ast) | = o\left({1}/({k+1})\right).$
\end{enumerate}
\end{theorem}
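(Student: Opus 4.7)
The plan is to reduce the objective-error bounds to bounds on the fixed-point residual (FPR), which has already been controlled in Corollary~\ref{cor:DRSaveragedconvergence} for Part~\ref{thm:drsnonergodic:part:general} and in Theorem~\ref{thm:1DDRS} for Part~\ref{thm:drsnonergodic:part:1d}. The two fundamental inequalities of Section~\ref{sec:fundamentalinequalities} are exactly the bridge: Proposition~\ref{prop:DRSupper} furnishes the upper bound and Proposition~\ref{prop:DRSlower} furnishes the lower bound. The key identities I will keep in mind are $z^{k+1}-z^k = \lambda_k(\TPRS z^k - z^k)$ and $x_f^k - x_g^k = (1/(2\lambda_k))(z^{k+1}-z^k) = (1/2)(\TPRS z^k - z^k)$, both coming from Lemma~\ref{prop:DRSmainidentity}, plus Fej\'er monotonicity $\|z^k - z^\ast\| \leq \|z^0 - z^\ast\|$ from Corollary~\ref{cor:DRSaveragedconvergence}.

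For the upper bound in Part~\ref{thm:drsnonergodic:part:general}, I would apply Proposition~\ref{prop:DRSupper} at $x=x^\ast$, drop the nonpositive term $(1-1/\lambda_k)\|z^{k+1}-z^k\|^2$, and then use the polarization identity
\begin{align*}
\|z^k - x^\ast\|^2 - \|z^{k+1}-x^\ast\|^2 = \langle z^k - z^{k+1}, (z^k - x^\ast) + (z^{k+1} - x^\ast)\rangle
\end{align*}
together with Cauchy--Schwarz. The triangle inequality and Fej\'er monotonicity give $\|z^k - x^\ast\| \leq \|z^0 - z^\ast\| + \|z^\ast - x^\ast\|$, so after dividing by $4\gamma\lambda_k$ and substituting $\|z^{k+1}-z^k\|=\lambda_k\|\TPRS z^k-z^k\|$, the bound reduces to $(1/(2\gamma))\|\TPRS z^k - z^k\|(\|z^0-z^\ast\|+\|z^\ast-x^\ast\|)$. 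Plugging in the big-$O$ FPR estimate $\|\TPRS z^k - z^k\| \leq \|z^0 - z^\ast\|/\sqrt{\underline{\tau}(k+1)}$ from Corollary~\ref{cor:DRSaveragedconvergence} yields the stated upper bound, and substituting the little-$o$ estimate $\|\TPRS z^k-z^k\|=o(1/\sqrt{k+1})$ yields the $o(1/\sqrt{k+1})$ claim. For the lower bound, Proposition~\ref{prop:DRSlower} plus Cauchy--Schwarz gives
\begin{align*}
f(x_f^k)+g(x_g^k)-f(x^\ast)-g(x^\ast) \geq -\frac{1}{\gamma}\|x_g^k - x_f^k\|\,\|z^\ast - x^\ast\| = -\frac{1}{2\gamma}\|\TPRS z^k - z^k\|\,\|z^\ast - x^\ast\|,
\end{align*}
and the same FPR bound finishes Part~\ref{thm:drsnonergodic:part:general}, including the little-$o$ refinement.

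For Part~\ref{thm:drsnonergodic:part:1d}, the proof structure is identical, but I would feed in the sharper one-dimensional FPR estimate from Theorem~\ref{thm:1DDRS}. Since $(\TPRS)_{1/2}z^{k+1}-z^{k+1} = (1/2)(\TPRS z^{k+1}-z^{k+1})$, that theorem gives $|\TPRS z^{k+1}-z^{k+1}|\leq \sqrt{2}|z^0 - z^\ast|/(k+1)$ and its $o(1/(k+1))$ analogue. With $\lambda_k \equiv 1/2$, the upper-bound computation produces the factor $\sqrt{2}/2 = 1/\sqrt{2}$ in front of $\gamma(k+1)$, matching the stated constants; the lower-bound computation is parallel (with the sign in the displayed lower bound understood as $-$, consistent with the absolute-value statement that follows and with Part~\ref{thm:drsnonergodic:part:general}).

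I do not anticipate a serious obstacle: the only non-mechanical step is recognizing that the difference $\|z^k - x^\ast\|^2 - \|z^{k+1}-x^\ast\|^2$ in the upper fundamental inequality, while not sign-definite for a single $k$, is pointwise dominated by the FPR times a uniformly bounded quantity via polarization and Fej\'er monotonicity. Once that observation is in hand, both the big-$O$ and little-$o$ rates are immediate consequences of the FPR estimates already established.
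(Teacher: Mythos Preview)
Your proof is correct and follows the same overall strategy as the paper: bound the objective error above via Proposition~\ref{prop:DRSupper}, below via Proposition~\ref{prop:DRSlower}, and then feed in the FPR rates from Corollary~\ref{cor:DRSaveragedconvergence} (for Part~\ref{thm:drsnonergodic:part:general}) and Theorem~\ref{thm:1DDRS} (for Part~\ref{thm:drsnonergodic:part:1d}).

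There is one technical difference in how the upper bound is handled. You work with the actual relaxation parameter $\lambda_k$, drop the nonpositive term $(1-1/\lambda_k)\|z^{k+1}-z^k\|^2$, and use the polarization identity $\|a\|^2-\|b\|^2=\langle a-b,a+b\rangle$ together with Cauchy--Schwarz and Fej\'er monotonicity. The paper instead exploits that Proposition~\ref{prop:DRSupper} holds for \emph{any} $\lambda\in(0,1]$ (since $x_f^k,x_g^k$ depend only on $z^k$), introduces $z_\lambda:=(\TPRS)_\lambda(z^k)$ as a free variable, rewrites the right-hand side as $2\langle z_\lambda-x^\ast,z^k-z_\lambda\rangle + 2(1-1/(2\lambda))\|z_\lambda-z^k\|^2$, and takes the infimum over $\lambda$; at $\lambda=1/2$ the second term vanishes exactly. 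Both routes land on the identical bound $(1/(2\gamma))\|\TPRS z^k-z^k\|(\|z^0-z^\ast\|+\|z^\ast-x^\ast\|)$. Your version is arguably more direct and avoids the auxiliary parameter; the paper's version makes explicit that $\lambda=1/2$ is the ``right'' choice independent of the algorithm's $\lambda_k$. For the lower bound and for Part~\ref{thm:drsnonergodic:part:1d} your argument matches the paper's exactly.
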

\begin{proof}
We prove Part~\ref{thm:drsnonergodic:part:general} first. For all $\lambda \in [0, 1]$, let $z_{\lambda} = (\TPRS)_{\lambda}(z^k)$. Evaluate the upper inequality in Equation~(\ref{prop:DRSupper:eq:main}) at $x = x^\ast$ to get
\begin{align*}
4\gamma\lambda(f(x_f^k) + g(x_g^k) &- f(x^\ast) - g(x^\ast) ) \leq \|z^{k} - x^\ast\|^2 - \|z_\lambda - x^\ast\|^2 + \left(1 - \frac{1}{\lambda}\right)\|z_\lambda - z^k\|^2.
\end{align*}
Recall the following identity:
\begin{align*}
\|z^{k} - x^\ast\|^2 - \|z_\lambda - x^\ast\|^2 - \|z_\lambda - z^k\|^2 &= 2\dotp{z_\lambda - x^\ast, z^k - z_\lambda}.
\end{align*}
By the triangle inequality, because $\|z_\lambda- z^\ast\| \leq \|z^k - z^\ast\|$,  and because $(\|z^j - z^\ast\|)_{j \geq 0}$ is monotonically nonincreasing (Corollary~\ref{cor:DRSaveragedconvergence}), it follows that
\begin{align*}
\|z_\lambda - x^\ast\| &\leq \|z_\lambda - z^\ast\| + \|z^\ast - x^\ast\| \leq \|z^0 - z^\ast\| + \|z^\ast - x^\ast\|. \numberthis \label{thm:drsnonergodic:eq:triangle}
\end{align*}
Thus, we have the bound:
\begin{align*}
f(x_f^k) + g(x_g^k) - f(x^\ast) - g(x^\ast) &\leq \inf_{\lambda \in [0, 1]}\frac{1}{4\gamma \lambda}\left(2\dotp{z_\lambda - x^\ast, z^k - z_\lambda} +2\left(1 - \frac{1}{2\lambda}\right) \|z_\lambda - z^k\|^2\right) \\
&\leq \frac{1}{\gamma }\|z_{1/2} - x^\ast\|\|z^k - z_{1/2}\|\\
&\stackrel{\eqref{thm:drsnonergodic:eq:triangle}}{\leq}  \frac{1}{\gamma }\left(\|z^0 - z^\ast\|+ \|z^\ast - x^\ast\|\right)\|z^k - z_{1/2}\|  \numberthis\label{eq:drsnonergodic:eq:oupper} \\
&\stackrel{\eqref{cor:DRSaveragedconvergence:eq:main}}{\leq}\frac{(\|z^0 - z^\ast\| + \|z^\ast - x^\ast\|)\|z^0 - z^\ast\|}{2\gamma \sqrt{\underline{\tau}(k+1)}}.
\end{align*}

The lower bound follows from the identity $x_g^k - x_f^k = ({1}/{2\lambda_k})(z^k - z^{k+1})$ and the fundamental lower inequality in Equation~(\ref{prop:DRSlower:eq:main}):
\begin{align*}
f(x_f^k) + g(x_g^k) - f(x^\ast) - g(x^\ast) \geq \frac{1}{2\gamma \lambda_k}\dotp{z^{k} - z^{k+1}, z^\ast - x^\ast}  &\geq -\frac{\|z^{k+1} - z^k\|\|z^\ast - x^\ast\|}{2\gamma\lambda_k}  \numberthis\label{eq:drsnonergodic:eq:olower} \\
&\stackrel{\eqref{cor:DRSaveragedconvergence:eq:main}}{\geq} -\frac{\|z^0 - z^\ast\|\|z^\ast - x^\ast\|}{2\gamma \sqrt{\underline{\tau}(k+1)}}.
\end{align*}

Finally, the $o(1/\sqrt{k+1})$ convergence rate follows from Equations~(\ref{eq:drsnonergodic:eq:oupper}) and~(\ref{eq:drsnonergodic:eq:olower}) combined with Corollary~\ref{cor:DRSaveragedconvergence} {because each upper bound is of the form (bounded quantity)$\times\sqrt{\text{FPR}}$, and $\sqrt{\text{FPR}}$ has rate $o(1/\sqrt{k+1})$.}

Part~\ref{thm:drsnonergodic:part:1d} follows by the same analysis but uses Theorem~\ref{thm:PPAconvergence} to estimate the FPR convergence rate.
\qed\end{proof}

Whenever $f$ or $g$ is Lipschitz, we can compute the convergence rate of $f + g$ evaluated at the same point. The following theorem is analogous to Corollary~\ref{cor:drsergodiclipschitz} in the ergodic case.  The proof essentially follows by combining the nonergodic convergence rate in Theorem~\ref{thm:drsnonergodic} with the convergence rate of $\|x_f^k - x_g^k\| = (1/\lambda_k)\|z^{k+1} - z^k\|$ deduced in Corollary~\ref{cor:DRSaveragedconvergence}.

\begin{corollary}[Nonergodic convergence with Lipschitz assumption]\label{cor:drsnonergodiclipschitz}
Let the notation be as in Theorem~\ref{thm:drsnonergodic}. Suppose that $f$ (respectively $g$) is $L$-Lipschitz continuous on $\overline{B(x^\ast, \|z^0 - z^\ast\|)}$, and let $x^k = x_g^k$ (respectively $x^k = x_f^k$).  Then we have the convergence rates of the nonnegative term:
\begin{enumerate}
\item \label{cor:drsnonergodiclipschitz:part:general} In general, we have the bounds:
\begin{align*}
0 \leq f(x^k) + g(x^k) &- f(x^\ast) - g(x^\ast) \leq \frac{\left(\|z^0 - z^\ast\| + \|z^\ast - x^\ast\| + \gamma L\right)\|z^0 - z^\ast\|}{2\gamma \sqrt{\underline{\tau}(k+1)}}
\end{align*}
and $ f(x^k) + g(x^k) - f(x^\ast) - g(x^\ast) = o\left({1}/{\sqrt{k+1}}\right).$
\item \label{cor:drsnonergodiclipschitz:part:1d}If $\cH = \vR$  and $\lambda_k \equiv {1}/{2}$, then for all $k \geq 0$,
\begin{align*}
0 \leq f(x^{k+1}) + g(x^{k+1}) &- f(x^\ast) - g(x^\ast) \leq \frac{\left(\|z^0 - z^\ast\| + \|z^\ast - x^\ast\| + \gamma L\right)\|z^0 - z^\ast\|}{\sqrt{2}\gamma (k+1)} \end{align*}
and $f(x^{k+1}) + g(x^{k+1}) - f(x^\ast) - g(x^\ast) = o\left({1}/({k+1})\right).$
\end{enumerate}
\end{corollary}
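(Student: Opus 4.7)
The plan is to reduce to Theorem~\ref{thm:drsnonergodic} by absorbing the discrepancy between $f(x_f^k)$ and $f(x_g^k)$ (respectively $g(x_g^k)$ and $g(x_f^k)$) into the Lipschitz constant. I describe the case where $f$ is $L$-Lipschitz on $\overline{B(x^\ast,\|z^0-z^\ast\|)}$ and $x^k=x_g^k$; the other case is symmetric.

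For the lower bound, Assumption~\ref{assump:additivesub} together with convexity implies $x^\ast$ is a minimizer of $f+g$. The Lipschitz assumption forces $\overline{B(x^\ast,\|z^0-z^\ast\|)}\subseteq\dom f$, and the nonexpansiveness of $\prox_{\gamma g}$ together with the Fej\'er monotonicity in Corollary~\ref{cor:DRSaveragedconvergence} gives $\|x_g^k-x^\ast\|\le\|z^k-z^\ast\|\le\|z^0-z^\ast\|$, hence $x_g^k\in\dom f\cap\dom g$ and $f(x_g^k)+g(x_g^k)\ge f(x^\ast)+g(x^\ast)$.

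For the upper bound in Part~\ref{cor:drsnonergodiclipschitz:part:general}, I would split
\begin{align*}
f(x_g^k)+g(x_g^k)-f(x^\ast)-g(x^\ast)=\bigl(f(x_g^k)-f(x_f^k)\bigr)+\bigl(f(x_f^k)+g(x_g^k)-f(x^\ast)-g(x^\ast)\bigr).
\end{align*}
Since both $x_g^k$ and $x_f^k$ lie in $\overline{B(x^\ast,\|z^0-z^\ast\|)}$, the Lipschitz property gives $|f(x_g^k)-f(x_f^k)|\le L\|x_g^k-x_f^k\|$. By Lemma~\ref{prop:DRSmainidentity}, $x_f^k-x_g^k=(1/(2\lambda_k))(z^{k+1}-z^k)=(1/2)(\TPRS z^k-z^k)$, so Corollary~\ref{cor:DRSaveragedconvergence} yields $\|x_g^k-x_f^k\|\le\|z^0-z^\ast\|/(2\sqrt{\underline{\tau}(k+1)})$ along with a little-$o(1/\sqrt{k+1})$ rate. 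Combining this with the upper bound from Theorem~\ref{thm:drsnonergodic}~Part~\ref{thm:drsnonergodic:part:general} and writing $L=\gamma L/\gamma$ to share the denominator $2\gamma\sqrt{\underline{\tau}(k+1)}$, the two numerators collapse into $(\|z^0-z^\ast\|+\|z^\ast-x^\ast\|+\gamma L)\|z^0-z^\ast\|$, and both summands inherit the $o(1/\sqrt{k+1})$ rate. Part~\ref{cor:drsnonergodiclipschitz:part:1d} follows by the identical argument after replacing Theorem~\ref{thm:drsnonergodic}~Part~\ref{thm:drsnonergodic:part:general} with Part~\ref{thm:drsnonergodic:part:1d} and Corollary~\ref{cor:DRSaveragedconvergence} with the sharper Theorem~\ref{thm:1DDRS}, which supplies the $|z^{k+2}-z^{k+1}|\le|z^0-z^\ast|/(\sqrt{2}(k+1))$ bound and the $o(1/(k+1))$ rate appropriate to the index shift to $x_f^{k+1},x_g^{k+1}$.

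The main obstacle is essentially bookkeeping: verifying that the Lipschitz contribution $L\|x_g^k-x_f^k\|$ combines with the bound from Theorem~\ref{thm:drsnonergodic} into a single clean expression. Pulling out a common factor of $1/(2\gamma\sqrt{\underline{\tau}(k+1)})$ (or $1/(\sqrt{2}\gamma(k+1))$ in the 1D case) makes the additive constant $\gamma L$ appear exactly as stated, so no further estimation is needed.
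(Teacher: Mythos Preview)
Your proposal is correct and matches the paper's proof essentially line for line: split off $f(x_g^k)-f(x_f^k)$ (respectively $g(x_f^k)-g(x_g^k)$), bound it by $L\|x_f^k-x_g^k\|=\tfrac{L}{2}\|\TPRS z^k-z^k\|$ via the Lipschitz assumption and Lemma~\ref{prop:DRSmainidentity}, apply the FPR bound from Corollary~\ref{cor:DRSaveragedconvergence} (or Theorem~\ref{thm:1DDRS} in the one-dimensional case), and add the result to the upper bound of Theorem~\ref{thm:drsnonergodic}. Your explicit justification of the lower bound $\ge 0$ and your handling of the index shift in Part~\ref{cor:drsnonergodiclipschitz:part:1d} are slightly more detailed than the paper's, but the argument is the same.
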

\begin{proof}
We prove Part~\ref{cor:drsnonergodiclipschitz:part:general} first. First recall that $\|x_g^k - x_f^k\| = (1/(2\lambda_k))\|z^{k+1} - z^k\|$. In addition, $(x_f^j)_{j \geq 0}, (x_g^j)_{j \geq 0} \subseteq \overline{B(x^\ast, \|z^0 - z^\ast\|)}$ (See Section~\eqref{sec:ergodic}). Thus, it follows that
\begin{align*}
f(x^k) + g(x^k) - f(x^\ast) - g(x^\ast) &\leq f(x_f^k) + g(x_g^k) - f(x^\ast) - g(x^\ast)  + L\|x_f^k - x_g^k\|\\
&= f(x_f^k) + g(x_g^k) - f(x^\ast) - g(x^\ast)  + \frac{L\|z^{k+1} - z^k\| }{2\lambda_k} \numberthis \label{cor:drsnonergodiclipschitz:oterm}\\
&\stackrel{(\ref{cor:DRSaveragedconvergence:eq:main})}{\leq}f (x_f^k) + g(x_g^k) - f(x^\ast) - g(x^\ast)  + \frac{\gamma L\|z^0 - z^\ast\|}{2\gamma  \sqrt{\underline{\tau}(k+1)}}. \numberthis \label{cor:drsnonergodiclipschitz:Oterm}
\end{align*}
Therefore, the upper bound follows from Theorem~\ref{thm:drsnonergodic} and Equation~(\ref{cor:drsnonergodiclipschitz:Oterm}). In addition, the $o(1/\sqrt{k+1})$ bound follows from Theorem~\ref{thm:drsnonergodic} combined with Equation~(\ref{cor:drsnonergodiclipschitz:oterm}) and Corollary~\ref{cor:DRSaveragedconvergence} {because each upper bound is of the form (bounded quantity)$\times\sqrt{\text{FPR}}$, and $\sqrt{\text{FPR}}$ has rate $o(1/\sqrt{k+1})$.}

Part~\ref{cor:drsnonergodiclipschitz:part:1d} follows by the same analysis, but uses Theorem~\ref{thm:PPAconvergence} to estimate the FPR convergence rate.
\qed\end{proof}

\section{Optimal FPR rate and arbitrarily slow convergence}\label{sec:optimalFPR}
In this section, we provide two examples where the DRS algorithm converges slowly.  Both examples are a  special cases of the following example, which originally appeared in \cite[Section 7]{bauschke2013rate}.

\begin{example}[DRS applied to two subspaces]\label{example:subspaces}
Let $\cH = \ell_2^2(\vN) = \{ (z_j)_{j \geq 0} \mid \forall j \in \vN, z_j \in \vR^2, \sum_{i=0}^\infty \|z^j\|^2 < \infty \}$. 
Let $R_{\theta}$ denote counterclockwise rotation in $\vR^2$ by $\theta$ degrees.  Let $e_0 := (1, 0)$ denote the standard unit vector, and let $e_{\theta} := R_\theta e_0$.  Suppose that $(\theta_j)_{j\geq0}$ is a sequence of angles in $(0, {\pi}/{2}]$ such that $\theta_i \rightarrow 0$ as $i \rightarrow \infty$. We define two subspaces:
{\color{blue}\begin{align}
U :=  \bigoplus_{i =0}^\infty \vR e_0  && \mathrm{and} && V := \bigoplus_{i=0}^\infty \vR e_{\theta_i},
\end{align}
where $\vR e_0=\{\alpha e_0:\alpha\in \vR\}$ and $\vR e_{\theta_i}=\{\alpha e_{\theta_i}:\alpha\in \vR\}$.
}
See Figure~\ref{fig:DRScounterexample} for an illustration. 

Note that \cite[Section 7]{bauschke2013rate} shows the projection identities
\begin{align*}
(P_V)_i &= \begin{bmatrix} \cos^2(\theta_i) & \sin(\theta_i)\cos(\theta_i) \\ \sin(\theta_i)\cos(\theta_i) & \sin^2(\theta_i) \end{bmatrix} && \mathrm{and} && (P_U)_i = \begin{bmatrix} 1 & 0 \\ 0 & 0\end{bmatrix},
\end{align*}
the DRS operator identity
\begin{align}\label{eq:TPRSlinearslow}
T := (\TPRS)_{{1}/{2}} &= c_0R_{\theta_0} \oplus c_1R_{\theta_1} \oplus \cdots,
\end{align}
and that $(z^j)_{j \geq0}$ converges in norm to $z^\ast = 0$ for any initial point $z^0$. 
\qed
\end{example}

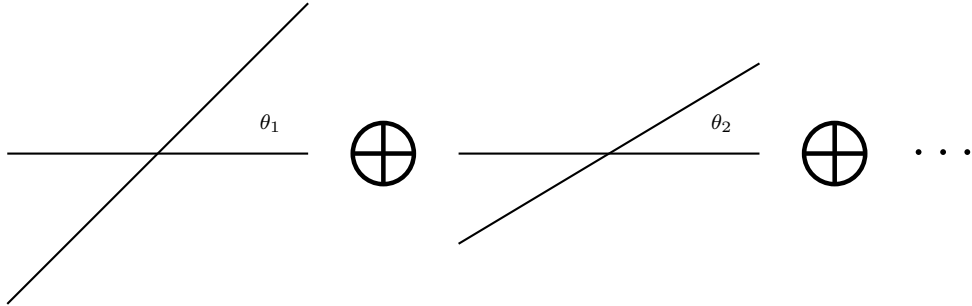
\begin{figure}[h!]
  \centering
\begin{tikzpicture}[scale=2]
\draw[-, thick] (-5, 0) -- (-3, 0);
\draw[-, thick] (-5, -1) -- (-3, 1);
\node at (-3.25, .20) {$\theta_1$};
\node at (-2.5, 0) {{\Huge$\bigoplus$}};

\draw[-, thick] (-2., 0) -- (.0, 0);
\draw[-, thick] (-2, -.60) -- (.0, .60);
\node at (-.25, .20) {$\theta_2$};
\node at (.5, 0) {{\Huge$\bigoplus$}};

\node[draw=none, font=\huge] (ellipsis1) at (1.25,0) {$ \cdots$};

\end{tikzpicture}
    \caption{Illustration of Example~\ref{example:subspaces}. Each pair of lines represents a $2$-dimensional component of $U \cup V$. The angles $\theta_k$ are converging to $0$.}
    \label{fig:DRScounterexample}
\end{figure}
\subsection{Optimal FPR rates}

The following theorem shows that the FPR estimates derived in Corollary~\ref{cor:DRSaveragedconvergence} are essentially optimal.  We note that this is the first optimality result for the FPR of the DRS iteration in the case of variational problems.
\begin{theorem}[Lower FPR complexity of DRS]\label{thm:optimalFPR}
There exists a Hilbert space $\cH$ and two closed subspaces $U$ and $V$ with zero intersection, $U \cap V = \{0\}$, such that for every $\alpha > {1}/{2}$, there exists $z^0 \in \cH$ such that if $(z^j)_{j \geq 0}$ is generated by $T = (\TPRS)_{1/2}$ applied to $f = \iota_V$ and $g = \iota_U$, then for all $k \geq 1$, we have the bound:
\begin{align*}
\|Tz^k - z^k\|^2 &\geq \frac{1}{(k+1)^{2\alpha}}.
\end{align*}
\end{theorem}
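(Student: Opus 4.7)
The plan is to instantiate Example~\ref{example:subspaces} with a fast-decaying angle sequence, namely $\theta_i := 2^{-i}$, and then choose $z^0$ depending on $\alpha$ so that one carefully selected block of the FPR sum alone produces the desired $1/(k+1)^{2\alpha}$ lower bound. The subspaces $U$ and $V$ are fixed once $(\theta_i)$ is fixed; only $z^0$ varies with $\alpha$. The intersection $U \cap V = \{0\}$ is immediate: inside each $\mathbb{R}^2$-block the two lines $\mathbb{R} e_0$ and $\mathbb{R} e_{\theta_i}$ meet only at $0$ because $\theta_i \in (0,1]$, and distinct blocks of the direct sum are orthogonal.

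The first computational step is to make the FPR explicit. Using Equation~\eqref{eq:TPRSlinearslow}, $T$ acts on the $i$-th $\mathbb{R}^2$-block as $A_i := \cos(\theta_i) R_{\theta_i}$, so $T^k$ acts as $\cos^k(\theta_i) R_{k\theta_i}$. From $R_\theta^\ast = R_{-\theta}$ and $R_\theta + R_{-\theta} = 2\cos(\theta)I_{\mathbb{R}^2}$ one computes directly that $(A_i - I)^\ast(A_i - I) = \sin^2(\theta_i) I_{\mathbb{R}^2}$, whence $\|(A_i - I) T^k z^{(i)}_0\|^2 = \sin^2(\theta_i)\cos^{2k}(\theta_i)\|z^{(i)}_0\|^2$, where $z^{(i)}_0 \in \mathbb{R}^2$ denotes the $i$-th component of $z^0$. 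Summing over blocks yields the exact identity
\begin{align*}
\|Tz^k - z^k\|^2 \;=\; \sum_{i=0}^\infty \sin^2(\theta_i)\,\cos^{2k}(\theta_i)\,\|z^{(i)}_0\|^2.
\end{align*}

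Given $\alpha > 1/2$, I would take $\|z^{(i)}_0\|^2 := C_\alpha \, 4^{-(2\alpha - 1)i}$ (with $z^{(i)}_0$ aligned along, say, $e_0$). Since $2\alpha - 1 > 0$ this is a convergent geometric series, so $z^0 \in \cH$. For each $k \geq 1$, lower-bound the sum by the single term at the index $i_k := \lceil \tfrac12\log_2(k/\alpha)\rceil$, chosen so that $k\theta_{i_k}^2 = k/4^{i_k}$ is of order $\alpha$. The elementary bounds $\sin^2\theta \geq \theta^2/4$ for $\theta \in (0,1]$ and $\log(1-x) \geq -2x$ for $x \in [0,1/2]$ (which gives $\cos^{2k}(\theta_i) \geq e^{-2k\theta_i^2}$) yield
\begin{align*}
\|Tz^k - z^k\|^2 \;\geq\; \tfrac{C_\alpha}{4}\, 4^{-2\alpha \, i_k}\, e^{-2k/4^{i_k}} \;\geq\; \frac{C_\alpha \,\alpha^{2\alpha}}{4 \cdot 4^{2\alpha}\, e^{2\alpha}}\cdot \frac{1}{k^{2\alpha}}.
\end{align*}
Choosing $C_\alpha$ a sufficiently large constant (depending only on $\alpha$) so that the prefactor exceeds $(k/(k+1))^{2\alpha} \leq 1$ uniformly, and absorbs the rounding loss in $i_k$, gives $\|Tz^k - z^k\|^2 \geq 1/(k+1)^{2\alpha}$ for all $k \geq 1$.

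The main obstacle is purely bookkeeping: the real-valued optimizer of $i \mapsto \sin^2(\theta_i)\cos^{2k}(\theta_i)\|z^{(i)}_0\|^2$ is generally non-integer, so constants lost in the ceiling and in the small-angle estimates must be absorbed into $C_\alpha$; the constraint that the bound hold uniformly for all $k \geq 1$ (including small $k$, where $i_k$ may need to be clamped to $0$) is handled by inflating $C_\alpha$ by a fixed absolute factor. The block-diagonal structure, the norm identity $(A_i - I)^\ast(A_i - I) = \sin^2(\theta_i) I_{\mathbb{R}^2}$, and the summability of the chosen $\|z^{(i)}_0\|^2$ are each routine.
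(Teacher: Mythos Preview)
Your argument is correct, but it takes a genuinely different route from the paper's. The paper instantiates Example~\ref{example:subspaces} with $c_i=\cos\theta_i=\sqrt{i/(i+1)}$ (so $\sin^2\theta_i=1/(i+1)$ decays polynomially), specifies the desired FPR vector $w^0=(I-T)z^0$ with $\|w^0_i\|=\sqrt{2\alpha e}\,(i+1)^{-(1+2\alpha)/2}$, lower bounds $\|T^k w^0\|^2$ by the tail $i\geq k$ using $(i/(i+1))^k\geq e^{-1}$ together with an integral estimate, and then verifies via an explicit block computation of $I-T$ that $w^0$ is in the range of $I-T$, producing $z^0$. You instead fix exponentially decaying angles $\theta_i=2^{-i}$, choose $z^0$ directly, and lower bound the FPR series by a single term at the nearly optimal index $i_k$. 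Your identity $(A_i-I)^\ast(A_i-I)=\sin^2\theta_i\,I_{\mathbb{R}^2}$ is a clean way to bypass both the range-of-$(I-T)$ verification and the block-by-block computation of $(I-T)z^0$; the price is the bookkeeping you flagged (absorbing the ceiling loss and the finitely many small-$k$ cases into $C_\alpha$), which the paper's tail-sum approach avoids entirely and which yields an explicit constant without case analysis.
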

\begin{proof}
We assume the setting of Example~\ref{example:subspaces}.  For all $i \geq 0$ set $c_i = (i/(i+1))^{1/2}$, and let $w^0 = (w^0_{j})_{j \geq 0} \in \cH$, where each $w^0_{i} \in \vR^2$ satisfies $\|w^0_i\| =  \sqrt{2\alpha e}(i+1)^{-(1+2\alpha)/2}$. Then for all $k \geq 1$,
\begin{align*}
\|T^{k}w^0\|^2 &= \sum_{i=0}^\infty c_i^{2k}\|w^0_{i}\|^2 \geq \sum_{i=k}^\infty\left(\frac{i}{i+1}\right)^k\frac{2\alpha e}{(i+1)^{1+ 2\alpha}} \geq \frac{1}{(k+1)^{2\alpha}}
\end{align*}
where we have used the bound $(i/(i+1))^k \geq e^{-1}$ for $i \geq k$ and the lower integral approximation of the sum.

Now we will show that $w^0$ is in the range of $I - T$. Indeed, for all $i \geq 1$ each block of $I-T$ is of the form 
\begin{align*}
I_{\vR^2} - \cos(\theta_i)R_{\theta_i} &= \begin{bmatrix} \sin^2(\theta_i) & \sin(\theta_i)\cos(\theta_i) \\-\sin(\theta_i) \cos(\theta_i) & \sin^2(\theta_i) \end{bmatrix} = \begin{bmatrix} \frac{1}{i+1}&  \frac{\sqrt{i}}{i+1} \\  -  \frac{\sqrt{i}}{i+1}   & \frac{1}{i+1} \end{bmatrix}. \numberthis \label{eq:trigmatrix}
\end{align*}
Therefore, the point $z^0 = (\sqrt{2\alpha e}((1/(j+1)^\alpha, 0))_{j\geq 0} \in \cH$ has image
\begin{align*}
w^0 &= (I-T)z^0 = \left(\sqrt{2\alpha e}\left(\frac{1}{(j+1)^{\alpha + 1}},  \frac{-\sqrt{j}}{(j+1)^{\alpha + 1}}\right)\right)_{j \geq 0}.
\end{align*}
 In addition, for all $i \geq 1$, we have $\|w^0_i\| = \sqrt{2\alpha e}(i+1)^{-(1+2\alpha)/2}$, and the inequality follows.\qed\end{proof}

\begin{remark}
The proof of Theorem~\ref{thm:optimalFPR} crucially relies on the strictness of inequality, $\alpha > 1/2$: if $\alpha = 1/2$, then $\|z^0\| = \infty.$
\end{remark}

\subsubsection{Notes on Theorem~\ref{thm:optimalFPR}}\label{sec:optimalitydisc}

With this new optimality result in hand, we can make the following list of optimal FPR rates, not to be confused with optimal rates in objective error, for a few standard splitting schemes:

 {\bf Proximal point algorithm (PPA):} For the general class of monotone operators, the counterexample furnished in \cite[Remarque 4]{brezis1978produits} shows that there exists a maximal monotone operator $A$ such that when iteration~(\ref{eq:KMiteration}) is applied to the resolvent $J_{\gamma A}$, the rate $o(1/(k+1))$ is tight.  In addition, if $A = \partial f$ for some closed, proper, and convex function $f$, then the FPR rate improves to $O(1/(k+1)^2)$ \cite[Th\'eor\`eme 9]{brezis1978produits}. We improve this result to $o(1/(k+1)^2)$ in Theorem~\ref{thm:PPAconvergence}. This result appears to be new and is optimal by \cite[Remarque 6]{brezis1978produits}.

 {\bf Forward backward splitting (FBS):} The FBS method reduces to the proximal point algorithm when the differentiable (or single valued operator) term is trivial.  Thus, for the general class of monotone operators, the $o(1/(k+1))$ FPR rate is optimal by \cite[Remarque 4]{brezis1978produits}.  We improve this rate to $o(1/(k+1)^2)$ in Theorem~\ref{thm:PPAconvergence}. This result appears to be new, and is optimal by \cite[Remarque 6]{brezis1978produits}.

 {\bf Douglas-Rachford splitting/ADMM:} Theorem~\ref{thm:optimalFPR} shows that the optimal FPR rate is $o(1/(k+1))$. Because the DRS iteration is equivalent to a proximal point iteration applied to a special monotone operator \cite[Section 4]{eckstein1992douglas}, Theorem~\ref{thm:optimalFPR} provides an alternative counterexample to \cite[Remarque 4]{brezis1978produits}. In particular, Theorem~\ref{thm:optimalFPR} shows that, in general, there is no closed, proper, and convex function $f$ such that  $(\TPRS)_{1/2} = \prox_{\gamma f}$. In the one dimensional case, we improve the FPR to $o({1}/{(k+1)^2})$ in Theorem~\ref{thm:1DDRS}.

 {\bf Miscellaneous methods:} By similar arguments we can deduce the tight FPR iteration complexity for the following methods, each of which at least has rate $o(1/{(k+1)})$ by Theorem~\ref{prop:averagedconvergence}: \emph{Standard Gradient descent} $o(1/(k+1)^2)$: (the rate follows from Theorem~\ref{thm:PPAconvergence}. Optimality follows from the fact that PPA is equivalent to gradient descent on Moreau envelope \cite[Proposition 12.29]{bauschke2011convex} and \cite[Remarque 4]{brezis1978produits}); \emph{Forward-Douglas Rachford splitting} \cite{briceno2012forward}: $o(1/(k+1))$ (choose a trivial cocoercive operator and use Theorem~\ref{thm:optimalFPR}); \emph{Chambolle and Pock's primal-dual algorithm} \cite{chambolle2011first} $o({1}/{(k+1)})$:  (reduce to DRS $(\sigma = \tau = 1)$ \cite[Section 4.2]{chambolle2011first} and apply Theorem~\ref{thm:optimalFPR} using the transformation $z^k = \text{primal}_k + \text{dual}_k$ \cite[Equation~(24)]{chambolle2011first} and the lower bound 
\begin{align*}
\|z^{k+1} - z^k\|^2 \leq 2\|\text{primal}_{k+1} - \text{primal}_k\|^2 + 2\|\text{dual}_{k+1} - \text{dual}_{k}\|^2;
\end{align*}
 \emph{V\~{u}/Condat's primal-dual algorithm} \cite{vu2013splitting,condat2013primal} $o({1}/{(k+1)})$: (reduces to Chambolle and Pock's method \cite{chambolle2011first}). 

Note that the rate established in Theorem~\ref{prop:averagedconvergence} has broad applicability, and this list is hardly extensive. For PPA, FBS, and standard gradient descent, the FPR always has rate that is the square of the objective value convergence rate.  We will see that the same is true for DRS in Theorem~\ref{thm:DRSobjectiveslow}.

\subsection{Arbitrarily slow convergence}

In \cite[Section 7]{bauschke2013rate}, the DRS setting in Example~\ref{example:subspaces} is shown to converge in norm, but not linearly. We improve their result by showing that a proper choice of parameters yields arbitrarily slow convergence in norm.

The following technical lemma will help us construct a sequence that convergenes arbitrarily slowly. The idea of the proof follows directly from the proof of \cite[Theorem 4.2]{franchetti1986neumann}, which shows that the alternating projection algorithm can converge arbitrarily slowly.
\begin{lemma}\label{lem:slowconvergencesequence}
Suppose that $h : \vR_+ \rightarrow (0, 1)$ is a function that is strictly decreasing to zero such that $\{1/(j+1) \mid j \in \vN \backslash \{0\}\} \subseteq \range(h).$ Then there exists a monotonic sequence $(c_j)_{j \geq 0} \subseteq (0, 1)$ such that $c_k \rightarrow 1^-$ as $k \rightarrow \infty$  and an increasing sequence of integers $(n_j)_{j \geq 0} \subseteq \vN \cup \{0\}$ such that for all $k \geq 0$,
\begin{align}
\frac{c_{n_k}^{k+1}}{n_k+1} > h(k+1)e^{-1}.
\end{align}
\end{lemma}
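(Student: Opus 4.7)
Plan: My plan is to construct $(n_k)$ and $(c_j)$ in two independent passes, after noticing that the role of $h$ in the final estimate collapses to the single inequality $(n_k+1)h(k+1)\le 1$, with a $1/e$ factor of slack that I will install via $(c_n)$.

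First, I would build the integer sequence. Set $n_0:=0$, which satisfies $(n_0+1)h(1)=h(1)<1$. Inductively, given $n_k$ with $(n_k+1)h(k+1)\le 1$, set $n_{k+1}:=n_k+1$ if $(n_k+2)h(k+2)\le 1$, and $n_{k+1}:=n_k$ otherwise. Strict monotonicity of $h$ gives $h(k+2)<h(k+1)$, so in the non-incrementing branch $(n_{k+1}+1)h(k+2)=(n_k+1)h(k+2)<(n_k+1)h(k+1)\le 1$; in the incrementing branch the bound holds by the condition that triggered the increment. Hence $(n_k+1)h(k+1)\le 1$ for every $k\ge 0$. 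Moreover $n_k\to\infty$: if $n_k$ stabilized at some $N$, the incrementing step would fail from some point onward, forcing $h(k+1)>1/(N+2)$ for all large $k$, contradicting $h(k+1)\to 0$.

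Next, I would tailor $(c_n)$ to the sequence just built. Define $K_n:=\max\{k\ge 0:n_k\le n\}$, which is finite because $n_k\to\infty$, non-decreasing in $n$, and unbounded (for any $M$, $K_{n_M}\ge M$ since $n_M\le n_M$). Put $c_n:=e^{-1/(K_n+2)}$, so $(c_n)\subseteq(0,1)$ is monotonically non-decreasing and $c_n\to 1^-$. The verification is one step: $k\in\{i:n_i\le n_k\}$ implies $K_{n_k}\ge k$, so $c_{n_k}\ge e^{-1/(k+2)}$ and hence $c_{n_k}^{k+1}\ge e^{-(k+1)/(k+2)}>e^{-1}$. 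Combining with $(n_k+1)h(k+1)\le 1$,
\[
\frac{c_{n_k}^{k+1}}{n_k+1}\;>\;\frac{1}{e(n_k+1)}\;\ge\;\frac{h(k+1)}{e}.
\]

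The main delicate point is how to read the word ``increasing''. For very slowly decreasing $h$ consistent with the hypotheses (for example $h(x)=1/\log(x+e)$, which meets even the range condition), any strictly increasing choice would force $n_k\ge k$ and therefore $c_{n_k}^{k+1}/(n_k+1)\le 1/(k+1)<h(k+1)/e$ for all large $k$; so strict monotonicity of $(n_k)$ is impossible in general, and I read ``increasing'' as ``monotonically non-decreasing'', which the construction above provides. The hypothesis $\{1/(j+1)\}\subseteq\range(h)$ is not explicitly invoked in the construction; it can be used to identify $n_k$ with $\lfloor 1/h(k+1)\rfloor-1$ unambiguously, but only the monotonicity of $h$ and the limit $h(k+1)\to 0$ are essential to the estimate.
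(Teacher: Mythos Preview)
Your proof is correct and takes a genuinely different route from the paper's.  The paper works with the inverse $h_2$ of the strictly increasing map $1/h-1$, defines $c_k:=h_2(k+1)/(1+h_2(k+1))$, and sets $n_k+1:=\lfloor 1/h(k+1)\rfloor$; the range hypothesis $\{1/(j+1)\}\subseteq\range(h)$ is used precisely to ensure that $k+1\in\dom(h_2)$ so that $c_k$ is well defined, and the estimate $k+1\le h_2(n_k+1)$ then gives $c_{n_k}\ge (k+1)/(k+2)$ and hence $c_{n_k}^{k+1}>e^{-1}$.  By contrast, you build $(n_k)$ inductively to enforce $(n_k+1)h(k+1)\le 1$ and then manufacture $(c_n)$ afterwards via $K_n$, obtaining the same core bound $c_{n_k}^{k+1}>e^{-1}$ without ever invoking the range condition.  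The paper's construction is more explicit (closed-form formulas for both sequences), while yours is more elementary and shows that the range hypothesis is actually superfluous for the conclusion.  Your observation that ``increasing'' must be read as ``non-decreasing'' is apt: the paper's own choice $n_k+1=\lfloor 1/h(k+1)\rfloor$ is likewise only non-decreasing in general, so both proofs agree on this reading.
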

\begin{proof}
Let $h_2$ be the inverse of the strictly increasing function $(1/h) - 1$, let $[x]$ denote the integer part of $x$, and for all $k \geq 0$, let
\begin{align}
c_{k} &= \frac{ h_2(k+1)}{1 + h_2(k+1)}.
\end{align}
Note that because $\{1/(j+1) \mid j \in \vN \backslash \{0\}\} \subseteq \range(h)$, $c_k$ is well defined. Indeed, $k + 1\in \dom(h_2) \cap \vN$ if, and only if, there is a $y\in \vR_+$ such that $(1/h(y)) - 1 = k+1 \Longleftrightarrow  h(y) = 1/(k+2)$. It follows that $(c_j)_{j \geq 0}$ is monotonic and $c_k \rightarrow 1^-$.  

Now, for all $x \geq 0$, we have $h_2^{-1}(x) = {1}/{h(x)} - 1 \leq [{1}/{h(x)}]$, thus, $x \leq h_2([{1}/{h(x)}])$.  To complete the proof, choose $n_k \geq 0$ such that $n_k + 1 = [{1}/{h(k+1)}]$ and note that
\begin{align*}
\frac{c_{n_k}^{k+1}}{n_k+1} &\geq h(k+1)\left(\frac{k+1}{1+(k+1)}\right)^{k+1} \geq h(k+1) e^{-1}. \qquad \qed 
\end{align*}
\end{proof}

\begin{theorem}[Arbitrarily slow convergence of DRS]\label{thm:arbitrarilyslow}
There is a point $z_0 \in \ell_2^2(\vN)$, such that for every function $h : \vR_+ \rightarrow (0, 1)$ that strictly decreases to zero, there exists two closed subspaces $U$ and $V$ with zero intersection, $U\cap V = \{0\}$, such that the relaxed PRS sequence $(z^j)_{j \geq 0}$ generated with the functions $f = \iota_{V}$ and $g = \iota_{U}$ and relaxation parameters $\lambda_k\equiv {1}/{2}$ satisfies the bound
\begin{align*}
  \|z^k - z^\ast\| \geq e^{-1} h(k)
  \end{align*}
but $(\|z^j - z^\ast\|)_{j \geq 0}$ converges to $0$.  
\end{theorem}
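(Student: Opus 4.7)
The plan is to work entirely within the framework of Example~\ref{example:subspaces}, fixing the initial point once and for all, and then, for each target rate $h$, tuning the rotation angles $(\theta_j)_{j \geq 0}$ via Lemma~\ref{lem:slowconvergencesequence} so that the diagonal block structure of $T = (\TPRS)_{1/2}$ forces the required lower bound while still permitting norm convergence to $0$. Concretely, I would take
$$
z^0 := \bigl((1/(j+1),\,0)\bigr)_{j\geq 0}\in \ell_2^2(\vN),
$$
which is independent of $h$ and lies in $\ell_2^2(\vN)$ since $\sum_{j\geq 0}1/(j+1)^2<\infty$. This is the $z^0$ that the theorem asks me to exhibit up front.

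Given $h:\vR_+\to(0,1)$ strictly decreasing to zero, I apply Lemma~\ref{lem:slowconvergencesequence} to produce a monotone sequence $(c_j)_{j\geq 0}\subseteq(0,1)$ with $c_k\uparrow 1$ and an increasing sequence of integers $(n_j)_{j\geq 0}$ satisfying
$$
\frac{c_{n_k}^{\,k+1}}{n_k+1} \;>\; h(k+1)\,e^{-1}\qquad\text{for all }k\geq 0.
$$
I then define $\theta_i\in(0,\pi/2)$ by $\cos(\theta_i)=c_i$; because $c_i\to 1^-$ we have $\theta_i\to 0^+$, so the hypotheses of Example~\ref{example:subspaces} are satisfied and the subspaces $U,V$ are defined as there. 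By the fixed point of $T$ is $z^\ast=0$, and \eqref{eq:TPRSlinearslow} gives the explicit block decomposition $T=c_0R_{\theta_0}\oplus c_1R_{\theta_1}\oplus\cdots$. Since each $R_{\theta_i}$ is an isometry on $\vR^2$, iterating block-by-block yields
$$
\|z^k\|^2 \;=\; \sum_{i\geq 0}c_i^{\,2k}\,\|z^0_i\|^2 \;=\; \sum_{i\geq 0}\frac{c_i^{\,2k}}{(i+1)^2}.
$$

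From here the two conclusions follow cleanly. For the lower bound, I drop all terms except $i=n_{k-1}$ and apply the lemma's estimate (with $k$ shifted by $1$):
$$
\|z^k - z^\ast\|^2 \;\geq\; \frac{c_{n_{k-1}}^{\,2k}}{(n_{k-1}+1)^2} \;>\; \bigl(h(k)e^{-1}\bigr)^2,
$$
so $\|z^k-z^\ast\|\geq e^{-1}h(k)$. For convergence, each term $c_i^{2k}/(i+1)^2$ tends to $0$ as $k\to\infty$ (since $c_i<1$) and is dominated in $k$ by $1/(i+1)^2$, which is summable; dominated convergence over the counting measure on $\vN$ gives $\|z^k\|^2\to 0$, as required.

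The main obstacle, and really the only one, is the simultaneous tension between a lower bound that can be made arbitrarily slow in $k$ and unconditional norm convergence of the full series — each block must contract ($c_i<1$), yet along the subsequence $n_{k-1}$ the contraction $c_{n_{k-1}}^{2k}$ must only beat the normalization $(n_{k-1}+1)^{-2}$ by the prescribed amount $h(k)^2 e^{-2}$. Lemma~\ref{lem:slowconvergencesequence} is tailor-made for exactly this tradeoff, so once the index shift is handled carefully the proof reduces to routine bookkeeping. I would end by noting that this result implies the absence of any universal rate function $r(k)\to 0$ for DRS in the subspace (and hence general convex) setting, sharpening \cite[Section 7]{bauschke2013rate}.
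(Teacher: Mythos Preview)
Your proposal is correct and follows essentially the same approach as the paper: fix $z^0$ with blocks $\|z^0_i\|=1/(i+1)$, use Lemma~\ref{lem:slowconvergencesequence} to choose $(c_j)$ (hence the angles $\theta_i$ via $c_i=\cos\theta_i$), and lower bound $\|z^k\|=\|T^k z^0\|$ by the single block $i=n_{k-1}$. You supply more detail than the paper does---in particular the dominated-convergence argument for $\|z^k\|\to 0$ and the explicit handling of the index shift---but the argument is the same.
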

\begin{proof}
We assume the setting of Example~\ref{example:subspaces}.  Suppose that $z^0 = (z^0_{j})_{j \geq 0}$, where for all $k \geq 0$,  $z^0_k \in \vR^2$, and $\|z^0_{k}\| = {1}/({k+1})$.  Then it follows that $\|z^0\|_{\cH}^2 = \sum_{i=0}^\infty 1/(k+1)^{2} < \infty$ and so $z^0 \in \cH$.  Thus, for all $k, n \geq 0$
\begin{align}
\|T^{k+1} z^0 \| &\geq c_n^{k+1}\|z^0_n\| = \frac{1}{n+1}c_{n}^{k+1}.
\end{align}
Therefore, we can achieve arbitrarily slow convergence by picking $(c_j)_{j \geq 0}$, and a subsequence $(n_j)_{j \geq 0} \subseteq \vN$ using Lemma~\ref{lem:slowconvergencesequence}.
\qed\end{proof}

\section{Optimal objective rates} \label{sec:PRSergodicoptimality}

In this section we construct four examples that show the nonergodic  and ergodic convergence rates in Corollary~\ref{cor:drsnonergodiclipschitz} and Theorem~\ref{thm:drsergodic} are optimal up to constant factors.  In particular, we provide examples of optimal ergodic convergence in the minimization case and in the feasibility case, where no objective is driving the minimization.

\subsection{Ergodic convergence of feasibility problems}\label{example:PRSergodic}

\begin{proposition}\label{prop:ergodicfeasibilityopt}
The ergodic feasibility convergence rate in Equation~\eqref{thm:drsergodic:eq:feasibility} is optimal up to a factor of two.
\end{proposition}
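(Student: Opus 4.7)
The plan is to exhibit a two-dimensional feasibility example whose ergodic feasibility gap matches one half of the stated upper bound, so the estimate in Equation~\eqref{thm:drsergodic:eq:feasibility} is tight up to the constant factor $2$. Concretely, take $\cH = \vR^2$ together with the orthogonal lines $U := \vR \times \{0\}$ and $V := \{0\} \times \vR$, and let $g = \iota_U$ and $f = \iota_V$. Since the proximal operator of an indicator is the orthogonal projection, a direct computation gives $\refl_{\gamma g}(a,b) = (a,-b)$ and $\refl_{\gamma f}(a,b) = (-a,b)$, hence $\TPRS = -I_{\vR^2}$. The only fixed point is $z^\ast = 0$, and $U \cap V = \{0\}$ supplies the minimizer $x^\ast = 0$.

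Next, I will run Algorithm~\ref{alg:DRS} in the PRS regime $\lambda_k \equiv 1$ from the initial point $z^0 = (1,0)$, so that $\Lambda_k = k+1$ and $\|z^0 - z^\ast\| = 1$. Because $\TPRS = -I$, the iterates oscillate as $z^k = (-1)^k z^0$; the sequence does not converge to $z^\ast$, but it remains Fej\'er monotone with respect to $\Fix(\TPRS) = \{0\}$, which is all the proof of Theorem~\ref{thm:drsergodic} requires.

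The key step is to apply the identity $z^{k+1} - z^k = 2\lambda_k(x_f^k - x_g^k)$ from Equation~\eqref{eq:DRSmainidentity2} to telescope the ergodic sum:
\begin{equation*}
\overline{x}_f^k - \overline{x}_g^k \;=\; \frac{1}{\Lambda_k}\sum_{i=0}^k \lambda_i\bigl(x_f^i - x_g^i\bigr) \;=\; \frac{z^{k+1} - z^0}{2\Lambda_k}.
\end{equation*}
For every even $k$ one has $z^{k+1} - z^0 = -2z^0$, so
\begin{equation*}
\|\overline{x}_f^k - \overline{x}_g^k\| \;=\; \frac{1}{k+1} \;=\; \frac{\|z^0 - z^\ast\|}{\Lambda_k},
\end{equation*}
which is exactly one half of the upper bound $2\|z^0 - z^\ast\|/\Lambda_k$ in Equation~\eqref{thm:drsergodic:eq:feasibility}. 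There is no real obstacle to this argument; the only items to verify are the operator identity $\TPRS = -I_{\vR^2}$, which is immediate from $\prox_{\gamma \iota_C} = P_C$, and the telescoping identity, which is a direct invocation of Lemma~\ref{prop:DRSmainidentity}. The asymptotic ratio of $1/2$ between the lower bound exhibited on the left and the upper bound on the right establishes the claimed optimality up to a factor of two.
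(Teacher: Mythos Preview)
Your proof is correct and follows essentially the same approach as the paper: the same two orthogonal lines in $\vR^2$ with PRS ($\lambda_k\equiv 1$), the same operator identity $\TPRS=-I$, and the same conclusion that $\|\overline{x}_f^k-\overline{x}_g^k\|=\|z^0-z^\ast\|/\Lambda_k$ on even $k$. The only cosmetic differences are your choice of $z^0=(1,0)$ versus the paper's $(1,1)$, and your use of the telescoping identity $\overline{x}_f^k-\overline{x}_g^k=(z^{k+1}-z^0)/(2\Lambda_k)$ in place of the paper's explicit bookkeeping of the iterates; the telescoping is a slightly cleaner route to the same answer.
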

\begin{proof}
Figure~\ref{fig:PRSerg} shows Algorithm~\ref{alg:DRS} with $\lambda_k = 1$ for all $k \geq 0$ (i.e. PRS) applied to the functions $f = \iota_{\{(x_1, x_2) \in \vR^2 | x_1 = 0\}}$ and $g  = \iota_{\{(x_1, x_2) \in \vR^2 | x_2 = 0\}}$ with the initial iterate $z^0 = (1, 1) \in \vR^2$. Because $\TPRS  = -I_{\cH}$,  it is easy to see that the only fixed point of $\TPRS $ is $z^\ast = (0, 0)$. In addition, the following identities are satisfied:
\begin{align*}
x_g^k = \begin{cases} (1, 0) & \text{even}~k; \\ (-1, 0) & \text{odd}~k. \end{cases} && z^{k} = \begin{cases} (1, 1) & \text{even}~k; \\ (-1, -1) & \text{odd}~k. \end{cases}  && x_f^k = \begin{cases} (0, -1) & \text{even}~k; \\ (0, 1) & \text{odd}~k.\end{cases}
\end{align*}
Thus, the PRS algorithm oscillates around the solution $x^\ast = (0, 0)$.  However, note that the averaged iterates satisfy:
\begin{align*}
\overline{x}_g^k = \begin{cases} (\frac{1}{k+1}, 0) &  \text{even}~k; \\ (0, 0) & \text{odd}~k. \end{cases} && \mathrm{and} && \overline{x}_f^k =  \begin{cases} (0, \frac{-1}{k+1}) & \text{even}~k; \\ (0, 0) & \text{odd}~k. \end{cases}
\end{align*}
It follows that $\|\overline{x}_g^k - \overline{x}_f^k\| = {(1/(k+1))} {\|(1, -1)\|}= (1/(k+1)){\|z^0 - z^\ast\|}$, for all $k \geq 0$.
\begin{figure}[h!]
  \centering
\begin{tikzpicture}[scale=2]
\draw[-] (-2, 0) -- (2, 0) node[below]{$x_1$};
\draw[-] (0, -2) -- (0, 2) node[right]{$x_2$};
\draw[->, color=red, very thick] (1, 1) -- (1,-1) -- (-1, -1) -- (-1, 1) -- (1, 1);
\draw[fill, darkgray] (1, 0) circle [radius=.04];
\draw[fill, blue] (0, -1) circle [radius=.04];
\draw[fill, darkgray] (-1, 0) circle [radius=.04];
\draw[fill, blue] (0, 1) circle [radius=.04];
\draw[fill] (-1, -1) circle [radius=.04];
\draw[fill] (1, 1) circle [radius=.04];
\draw[fill] (0, 0) circle [radius=.04];
\node[above right] at (1, 1) {$z^{\text{even}}$};
\node[below left] at (-1, -1) {$z^{\text{odd}}$};
\node[above right] at (1, 0) {$x_g^{\text{even}}$};
\node[below right] at (0, -1) {$x_f^{\text{even}}$};
\node[above left] at (-1, 0) {$x_g^{\text{odd}}$};
\node[above left] at (0, 1) {$x_f^{\text{odd}}$};
\node[above left] at (.05, .05) {$x^\ast$};

\draw[fill, darkgray] (.3333, 0) circle [radius=.04];
\draw[fill, darkgray] (.2, 0) circle [radius=.030];
\draw[fill, darkgray] (.1429, 0) circle [radius=.020];


\draw[fill, blue] (0, -.3333) circle [radius=.04];
\draw[fill, blue] (0, -.2) circle [radius=.03];
\draw[fill, blue] (0, -.1429) circle [radius=.02];

\end{tikzpicture}
    \caption{Example \ref{example:PRSergodic} of PRS. $z^k$ hops between  $(1,1)$ and $(-1,-1)$ while the ergodic iterates $\overline{x}_g^k$ and $\overline{x}_f^k$ (dots of decreasing size) approach $x^\ast$.}
    \label{fig:PRSerg}
\end{figure}
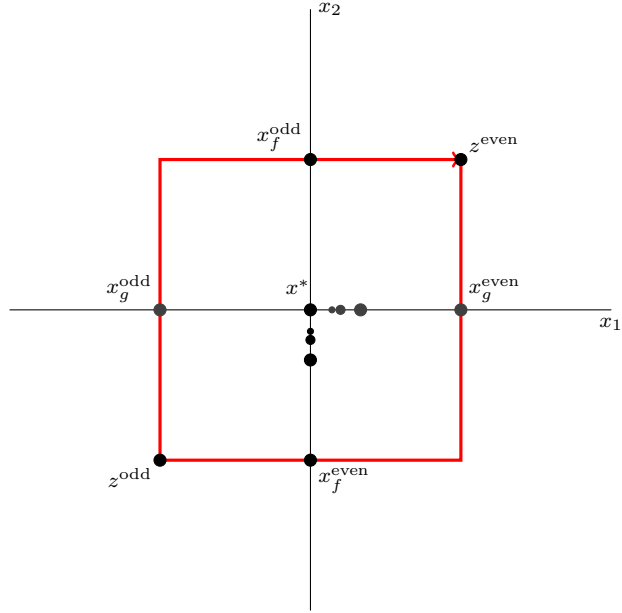

\qed\end{proof}

\subsection{Ergodic convergence of minimization problems}
In this section, we will construct an example where the ergodic rates of convergence in Section~\ref{sec:ergodic} are optimal up to constant factors.  In addition, the example we construct only converges in the ergodic sense and diverges otherwise. Throughout this section, we let $\gamma =1$ and $\lambda_k \equiv 1$, we work in the Hilbert space $\cH = \vR$, and we consider the following objective functions: for all $x \in \vR$, let
\begin{align*}
g(x) = 0, && \mathrm{and} && f(x) &= |x| \numberthis \label{eq:fgergodiccounter}.
\end{align*}
Recall that for all $x \in \vR$
\begin{align*}
\prox_{ g}(x) = x,  && \mathrm{and} && \prox_{ f}(x) = \max\left(|x| - 1, 0\right)\sign(x). \numberthis \label{eq:gproxcounter}
\end{align*}

The following lemma characterizes the minimizer of $f+g$ and the fixed points of $\TPRS $.  The proof is simple so we omit it.
\begin{lemma}\label{lem:optimalityofabs}
The minimizer of $f+g$ is unique and equal to $0 \in \vR$.  Furthermore, $0$ is the unique fixed point of $\TPRS $.
\end{lemma}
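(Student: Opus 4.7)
The plan is to verify both statements by direct computation using the formulas for $\prox_f$ and $\prox_g$ recorded in~\eqref{eq:gproxcounter}. The argument is essentially one line for each claim, so the ``main obstacle'' is really just bookkeeping: one has to translate ``fixed point of $T_{\mathrm{PRS}}$'' into a statement about $\prox_f$ and then invoke Lemma~\ref{lem:optimalityofprox}.

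For the first claim, I would observe that $f+g = |\cdot|$ is a strictly positive function on $\vR \setminus \{0\}$ that vanishes at $0$. Hence $0$ is the unique minimizer of $f+g$.

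For the second claim, I would first use \eqref{eq:gproxcounter} to compute $\refl_{g}(x) = 2\prox_g(x) - x = 2x - x = x$, so that $\refl_g = I_{\vR}$ and therefore
\begin{equation*}
T_{\mathrm{PRS}} = \refl_f \circ \refl_g = \refl_f = 2\prox_f - I_{\vR}.
\end{equation*}
A point $z \in \vR$ is thus a fixed point of $T_{\mathrm{PRS}}$ if and only if $\prox_f(z) = z$. By Lemma~\ref{lem:optimalityofprox}, this is equivalent to $0 = z - \prox_f(z) \in \partial f(z)$, i.e.\ $z$ is a minimizer of $f$. Since $f = |\cdot|$ has $0$ as its unique minimizer, the unique fixed point of $T_{\mathrm{PRS}}$ is $0$. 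Alternatively, one can read off the conclusion from Lemma~\ref{lem:PRSoptimality}, which identifies $\Fix(T_{\mathrm{PRS}})$ with $\{x^\ast + \gamma \tnabla g(x^\ast) : x^\ast \in \zer(\partial f + \partial g)\}$; here $g \equiv 0$ so $\tnabla g \equiv 0$ and $\zer(\partial f + \partial g) = \{0\}$, giving the same conclusion.
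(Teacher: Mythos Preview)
Your proof is correct. The paper actually omits the proof of this lemma (``The proof is simple so we omit it''), so there is nothing to compare against; your direct computation via $\refl_g = I_{\vR}$ and the characterization $\prox_f(z) = z \Leftrightarrow 0 \in \partial f(z)$ is exactly the intended elementary verification.
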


Because of Lemma~\ref{lem:optimalityofabs}, we will use the notation:
\begin{align*}
z^\ast = 0 && \mathrm{and} && x^\ast = 0. \numberthis \label{eq:absoptimality}
\end{align*}
We are ready to prove our main optimality result.

\begin{proposition}[Optimality of ergodic convergence rates]\label{prop:ergodicoptimality}
Suppose that $z^0 = 2 - \varepsilon$ for some $\varepsilon \in (0, 1)$. Then the PRS algorithm applied to $f$ and $g$ with initial point $z^0$ does not converge.

Furthermore,  as $\varepsilon$ goes to $0$, the ergodic objective convergence rate in Theorem~\ref{thm:drsergodic} is tight, and the ergodic objective convergence rate in Corollary~\ref{cor:drsergodiclipschitz} is tight up to a factor of ${5}/{2}$.  In addition, the feasibility convergence rate of Theorem~\ref{thm:drsergodic} is tight up to a factor of $4$.
\end{proposition}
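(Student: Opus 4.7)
The plan is to compute the PRS iteration in closed form and then read off each optimality constant by direct comparison with the bounds of Theorem~\ref{thm:drsergodic} and Corollary~\ref{cor:drsergodiclipschitz}, letting $\varepsilon\to 0^+$ at the end. Because $g\equiv 0$ gives $\prox_g = I_\cH$ and $\refl_g = I_\cH$, the PRS operator collapses to $\TPRS = \refl_f$, and with $\lambda_k\equiv 1$ the iteration is simply $z^{k+1} = \refl_f(z^k)$. Using the shrinkage formula in~\eqref{eq:gproxcounter}, a one-line computation starting from $z^0 = 2-\varepsilon\in(1,2)$ yields $z^1 = -\varepsilon$, and since then $|z^k|<1$ for all $k\geq 1$ the iterates alternate between $-\varepsilon$ and $\varepsilon$. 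This immediately proves that $(z^j)_{j\geq 0}$ does not converge.

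Next I identify the auxiliary sequences from Lemma~\ref{prop:DRSmainidentity}. Since $\refl_g = I_\cH$, we have $x_g^k = z^k$ and $x_f^k = \prox_f(z^k)$, so $x_f^0 = 1-\varepsilon$ and $x_f^k = 0$ for every $k\geq 1$ (shrinkage annihilates anything in $(-1,1)$). A partial sum then gives $\overline{x}_f^k = (1-\varepsilon)/(k+1)$ for every $k\geq 0$, while the alternating tail of $(x_g^j)_{j\geq 1}$ telescopes to
\[
\overline{x}_g^k = \begin{cases} (2-\varepsilon)/(k+1), & k \text{ even}, \\ 2(1-\varepsilon)/(k+1), & k \text{ odd}. \end{cases}
\]
Combined with Lemma~\ref{lem:optimalityofabs}, which gives $x^\ast = z^\ast = 0$, and $\|z^0-x^\ast\|^2 = (2-\varepsilon)^2$, these formulas are all we need.

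With the closed forms in hand, the three optimality statements reduce to arithmetic in the limit $\varepsilon\to 0^+$. For Theorem~\ref{thm:drsergodic}, $f(\overline{x}_f^k) + g(\overline{x}_g^k) - f(x^\ast) - g(x^\ast) = |\overline{x}_f^k| = (1-\varepsilon)/(k+1)$, whereas the upper bound equals $(2-\varepsilon)^2/(4(k+1))$; the ratio tends to $1$, proving tightness. For Corollary~\ref{cor:drsergodiclipschitz} I take $x^k = x_g^k$ (so $f=|\cdot|$ is the Lipschitz function with $L=1$); along even $k$, $|\overline{x}_g^k|\to 2/(k+1)$, while the bound $(2-\varepsilon)^2/(4(k+1)) + 2(2-\varepsilon)/(k+1)$ tends to $5/(k+1)$, giving the factor $5/2$. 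For the feasibility bound~\eqref{thm:drsergodic:eq:feasibility}, $|\overline{x}_g^k - \overline{x}_f^k|$ tends to $1/(k+1)$ in either parity, while the bound $2\|z^0-z^\ast\|/\Lambda_k$ tends to $4/(k+1)$, yielding the factor $4$.

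I do not foresee any real obstacle. The mild care needed is (i) the parity split for $\overline{x}_g^k$, where the odd case happens to be strictly smaller and so does not drive any of the tightness constants, and (ii) keeping track of the hypothesis $\varepsilon\in(0,1)$, which is used exactly twice: to ensure $z^0\in(1,2)$ so that $\prox_f(z^0) = z^0 - 1$, and to ensure $|z^k|<1$ for all $k\geq 1$ so that $x_f^k\equiv 0$ after the first step. This collapse of $(x_f^j)_{j\geq 1}$ is the structural reason the objective along $\overline{x}_f^k$ decays exactly like $1/(k+1)$ with the sharp constant $1$, saturating the bound of Theorem~\ref{thm:drsergodic}.
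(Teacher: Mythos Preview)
Your proposal is correct and follows essentially the same approach as the paper: both compute the PRS iterates explicitly (you streamline this slightly by noting $\TPRS=\refl_f$), obtain the same closed forms for $z^k$, $x_f^k$, $x_g^k$ and their ergodic averages, and compare directly with the theoretical bounds in the limit $\varepsilon\to 0$. The arithmetic and the resulting tightness constants ($1$, $5/2$, and $4$) all agree with the paper's computations.
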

\begin{proof}
We will now compute the sequences $(z^j)_{j \geq 0}$, $(x_g^j)_{j \geq 0}$, and $(x_f^j)_{j \geq 0}$.  We proceed by induction: First $x_g^0 = \prox_{\gamma g} (z^0) = z^0$ and $x_f^0 = \prox_{\gamma f}\left(2x_g^0 - z^0\right) = \max\left( {|z^0| - 1}, 0\right)\sign(z^0) = 1- \varepsilon.$ Thus, it follows that $z^1 = z^0 + 2(x_f^0 - x_g^0) = 2-\varepsilon + 2(1-\varepsilon - (2- \varepsilon)) = z^0 = -\varepsilon$.  Similarly, $x_g^1 = z^1 = -\varepsilon$.  Finally,  $x_f^1 = \max\left(\varepsilon - {1}, 0\right)\sign(-\varepsilon) = 0$ and $z^2 = z^1 + 2(x_f^1 - x_g^1) = z^1 + 2(\varepsilon) = \varepsilon$. Thus, by induction we have the following identities: For all $k \geq 1$,
\begin{align*}
z^k = (-1)^k\varepsilon, &&  x_g^k &= (-1)^k\varepsilon, && x_f^k = 0.\numberthis \label{example:ergodicex1:eq:3}
\end{align*}
Notice that that $(z^j)_{j \geq 0}$ and $(x_g^j)_{j \geq 0}$ do not converge, but they oscillate around the fixed point of $\TPRS $.

We will now compute the ergodic iterates:
\begin{align*}
\overline{x}_g^k = \frac{1}{k+1} \sum_{i=0}^k x_g^i \stackrel{(\ref{example:ergodicex1:eq:3})}{=} \begin{cases} \frac{2-\varepsilon}{k+1} & \text{if } k \text{ is even}; \\ \frac{2 -2 \varepsilon}{k+1} & \text{otherwise.} \end{cases}  && \mathrm{and} && \overline{x}_f^k = \frac{1}{k+1} \sum_{i=0}^k x_f^i \stackrel{(\ref{example:ergodicex1:eq:3})}{=} \frac{1-\varepsilon}{k+1} \numberthis \label{example:ergodicex1:eq:4}.
\end{align*}
Let us use these formulas to compute the objective values:
\begin{align*}
f(\overline{x}_f^k) + g(\overline{x}_f^k) - f(0) - g(0) \stackrel{(\ref{example:ergodicex1:eq:4})}{=} \frac{1-\varepsilon}{k+1} && \mathrm{and} && f(\overline{x}_g^k) + g(\overline{x}_g^k) - f(0) - g(0) \stackrel{(\ref{example:ergodicex1:eq:4})}{=}  \begin{cases} \frac{2-\varepsilon}{k+1} & \text{if } k \text{ is even}; \\ \frac{2 -2 \varepsilon}{k+1} & \text{otherwise.} \end{cases}\numberthis \label{example:ergodicex1:eq:6}
\end{align*}
We will now compare the theoretical bounds from Theorem~\ref{thm:drsergodic} and Corollary~\ref{cor:drsergodiclipschitz} with the rates we observed in Equation~(\ref{example:ergodicex1:eq:6}).  Theorem~\ref{thm:drsergodic} bounds the objective error at $\overline{x}_f^k$ by 
\begin{align*}
\frac{|z^0 - x^\ast|^2}{4(k+1)} &= \frac{4 - 4\varepsilon}{4(k+1)}  +\frac{ \varepsilon^2} {4(k+1)} = \frac{1- \varepsilon}{k+1} + \frac{ \varepsilon^2} {4(k+1)}. \numberthis \label{example:ergodicex1:eq:7}
\end{align*}
By taking $\varepsilon$ to $0$, we see that this bound is tight.

Because $f$ is $1$-Lipschitz continuous, Corollary~\ref{cor:drsergodiclipschitz} bounds the objective error at $\overline{x}_g^k$ with
\begin{align*}
\frac{|z^0 - x^\ast|^2}{4(k+1)} + \frac{2|z^0 - z^\ast|}{(k+1)} &\stackrel{(\ref{example:ergodicex1:eq:7})}{=} \frac{1- \varepsilon}{k+1} + \frac{ \varepsilon^2} {4(k+1)} + 2\frac{2- \varepsilon}{k+1} = \frac{5 - 3\varepsilon}{k+1} + \frac{\varepsilon^2}{4(k+1)}.\numberthis \label{example:ergodicex1:eq:8}
\end{align*}
As we take $\varepsilon$ to $0$, we see that this bound it tight up to a factor of ${5}/{2}$.

Finally, consider the feasibility convergence rate:
\begin{align*}
|\overline{x}_g^k - \overline{x}_f^k| & \stackrel{(\ref{example:ergodicex1:eq:3})}{=}  \begin{cases} \frac{1}{k+1} & \text{if } k \text{ is even}; \\ \frac{1-\varepsilon}{k+1} & \text{otherwise.} \end{cases}.\numberthis \label{example:ergodicex1:eq:9}
\end{align*}
Theorem~\ref{thm:drsergodic} predicts the following upper bound for Equation~(\ref{example:ergodicex1:eq:9}):
\begin{align*}
\frac{2|z^0 - z^\ast|}{k+1} &= 2\frac{2 - \varepsilon}{k+1} = \frac{4 - 2\varepsilon}{k+1}. \numberthis\label{example:ergodicex1:eq:10}
\end{align*}
By taking $\varepsilon$ to $0$, we see that this bound is tight up to a factor of $4$.
\qed\end{proof}

\subsection{Optimal nonergodic objective rates}

Our aim in this section is to show that if $\lambda_k \equiv 1/2$, then the non-ergodic convergence rate of $o({1}/{\sqrt{k+1}})$ in Corollary~\ref{cor:drsnonergodiclipschitz} is essentially tight.  In particular, for every $\alpha > {1}/{2}$, we provide examples of $f$ and $g$ such that $f$ is $1$-Lipschitz and 
\begin{align*}
f(x_g^k) + g(x_g^k) - f(x^\ast) - g(x^\ast) = \Omega\left(\frac{1}{(k+1)^\alpha}\right).
\end{align*}

Throughout this section, we will be working with the proximal operator of a distance functions.
\begin{proposition}\label{prop:distprox}
Let $C$ be a closed, convex subset of $\cH$ and let $d_C(x) = \min_{y \in C} \|x - y\|$. Then $d_C(x)$ is $1$-Lipschitz and for all $x \in \cH$
\begin{align}
\prox_{\gamma d_C}(x) = \theta P_{C}(x) + (1-\theta)x && \mathrm{where} && \theta =\begin{cases}
\frac{\gamma}{d_C(x)} & \text{if } \gamma \leq d_C(x); \\
1 & \text{otherwise}.
\end{cases}
\end{align}
\end{proposition}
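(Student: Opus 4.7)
The proof splits cleanly into the Lipschitz statement and the prox formula. The $1$-Lipschitz continuity of $d_C$ is a one-line triangle-inequality argument: for any $x, y \in \cH$,
\begin{align*}
d_C(x) \leq \|x - P_C(y)\| \leq \|x - y\| + \|y - P_C(y)\| = \|x - y\| + d_C(y),
\end{align*}
and swapping $x$ and $y$ yields $|d_C(x) - d_C(y)| \leq \|x - y\|$.

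For the prox formula, I would invoke the optimality characterization from Lemma~\ref{lem:optimalityofprox}: $y^\ast = \prox_{\gamma d_C}(x)$ if and only if $(x - y^\ast)/\gamma \in \partial d_C(y^\ast)$. The input needed is the standard subdifferential of a distance function (see \cite[Example~16.62]{bauschke2011convex}):
\begin{align*}
\partial d_C(y) = \begin{cases} \{(y - P_C(y))/d_C(y)\} & \text{if } y \notin C, \\ N_C(y) \cap \overline{B(0, 1)} & \text{if } y \in C, \end{cases}
\end{align*}
where $N_C$ denotes the normal cone. I would quote this rather than re-derive it, since it is routine convex calculus and is not new material for this paper.

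With the formula in hand I would check the cases separately. If $x \in C$, then $d_C(x) = 0 < \gamma$ places us in the $\theta = 1$ branch, and the formula yields $P_C(x) = x$, which agrees with the unique minimizer. If $x \notin C$ and $\gamma \geq d_C(x)$, the candidate is $y^\ast = P_C(x) \in C$: the projection characterization $\langle x - P_C(x), z - P_C(x)\rangle \leq 0$ for all $z \in C$ gives $x - P_C(x) \in N_C(y^\ast)$, and $\|(x - y^\ast)/\gamma\| = d_C(x)/\gamma \leq 1$, so $(x - y^\ast)/\gamma \in \partial d_C(y^\ast)$. Finally, for $x \notin C$ and $\gamma < d_C(x)$, set $\theta = \gamma/d_C(x) \in (0, 1)$ and $y^\ast = \theta P_C(x) + (1-\theta) x$. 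The key mini-step is $P_C(y^\ast) = P_C(x)$, obtained by multiplying the projection inequality by $1-\theta \geq 0$ to get $\langle y^\ast - P_C(x), z - P_C(x)\rangle \leq 0$; this yields $d_C(y^\ast) = (1-\theta) d_C(x) = d_C(x) - \gamma > 0$, and a short computation shows that the unique subgradient $(y^\ast - P_C(y^\ast))/d_C(y^\ast)$ and the vector $(x - y^\ast)/\gamma$ both equal $(x - P_C(x))/d_C(x)$, verifying optimality.

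The only nontrivial ingredient is the subdifferential formula for $d_C$, whose derivation passes through a convex conjugate computation; since the paper already relies heavily on \cite{bauschke2011convex}, I would simply cite it. Everything else is a direct verification once the candidate $y^\ast$ is written down and the segment-projection identity $P_C(y^\ast) = P_C(x)$ is noted.
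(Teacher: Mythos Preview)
Your proof is correct and takes essentially the same approach as the paper: the paper's proof is a one-line appeal to the subgradient formula for $d_C$ in \cite{bauschke2011convex}, and you have simply filled in the case-by-case verification that the candidate point satisfies the optimality condition $(x - y^\ast)/\gamma \in \partial d_C(y^\ast)$ from Lemma~\ref{lem:optimalityofprox}.
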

\begin{proof}
Follows directly from the formula for the subgradient of $d_C$ \cite[Example 16.49]{bauschke2011convex}.
\qed\end{proof}

Proposition~\ref{prop:distprox} says that $\prox_{\gamma d_C}(x)$ reduces to a projection map whenever $x$ is close enough to $C$.  Proposition~\ref{prop:sameproxsequencedist} constructs a family of examples such that if $\gamma$ is chosen large enough, then DRS does not distinguish between indicator functions and distance functions.

\begin{proposition}\label{prop:sameproxsequencedist}
Suppose that $V$ and $U$ are linear subspaces of $\cH$ and $U \cap V = \{0\}$. If $\gamma \geq \|z^0\|$ and $\lambda_k = {1}/{2}$ for all $k \geq 0$, then Algorithm~\ref{alg:DRS} applied to the either pair of objective functions $(f = \iota_V, g = \iota_U)$ and $(f = d_V, g = \iota_U)$ produces the same sequence $(z^j)_{j \geq 0}$
\end{proposition}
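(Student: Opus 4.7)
\medskip

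The plan is to proceed by induction on $k$, with the joint inductive hypothesis that (i) $z^k$ coincides for the two algorithms, and (ii) $\|z^k\| \leq \gamma$. The base case is immediate: both algorithms start from the same $z^0$, and $\|z^0\| \leq \gamma$ by assumption.

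For the inductive step, first observe that since $g = \iota_U$ is common to both setups, $x_g^k = \prox_{\gamma \iota_U}(z^k) = P_U(z^k)$ is identical in the two iterations. Because $U$ is a \emph{linear} subspace, the reflector $\refl_U = 2P_U - I_{\cH}$ is a linear isometry of $\cH$, so $w^k := \refl_U(z^k)$ satisfies $\|w^k\| = \|z^k\| \leq \gamma$. Since $V$ is also a linear subspace, $0 \in V$ and hence $d_V(w^k) \leq \|w^k\| \leq \gamma$. Proposition~\ref{prop:distprox} then gives $\prox_{\gamma d_V}(w^k) = P_V(w^k) = \prox_{\gamma \iota_V}(w^k)$, so $x_f^k$ agrees in both algorithms as well. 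Applying the DRS update $z^{k+1} = z^k + (x_f^k - x_g^k)$ shows that $z^{k+1}$ is common to both.

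It remains to close the induction by verifying $\|z^{k+1}\| \leq \gamma$. For the pair $(f = \iota_V, g = \iota_U)$, the point $0 \in U \cap V$ is a zero of $\partial \iota_V + \partial \iota_U$, so by Lemma~\ref{lem:PRSoptimality} and Fej\'er monotonicity of relaxed PRS with respect to $\Fix(\TPRS)$ (Corollary~\ref{cor:DRSaveragedconvergence}), we have $\|z^{k+1} - 0\| \leq \|z^k - 0\| \leq \gamma$. Because the two sequences have just been shown to agree at step $k+1$, the same bound carries over to the $(d_V, \iota_U)$ case, completing the induction.

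The only mildly delicate point is the interplay between the two ingredients: one must invoke the isometry of $\refl_U$ (which uses that $U$ is a linear subspace, not merely affine) to pass from $\|z^k\| \leq \gamma$ to $\|w^k\| \leq \gamma$, and then use Fej\'er monotonicity in the indicator setting to perpetuate the bound $\|z^{k+1}\| \leq \gamma$. Everything else reduces to the single observation that $\prox_{\gamma d_V}$ collapses to $P_V$ whenever the argument lies within distance $\gamma$ of $V$, which is exactly the content of Proposition~\ref{prop:distprox}.
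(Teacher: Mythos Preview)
Your proof is correct and follows essentially the same approach as the paper: both arguments identify $z^\ast = 0$ as a fixed point, use Fej\'er monotonicity to propagate the bound $\|z^k\| \leq \gamma$, carry that bound through the reflection $\refl_U$, and then invoke Proposition~\ref{prop:distprox} to collapse $\prox_{\gamma d_V}$ to $P_V$. Your version is slightly more streamlined---you use the direct estimate $d_V(w^k) \leq \|w^k - 0\|$ (since $0 \in V$) and the isometry of $\refl_U$, whereas the paper routes the same bound through the prox contraction inequality of Corollary~\ref{cor:proxcontraction}---but the structure is the same.
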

\begin{proof}
Let $(z^j)_{j \geq 0}$ be the sequence generated by the functions $(f = \iota_V, g = \iota_U)$. Observe that $x^\ast = 0$ is a minimizer of both functions pairs and $z^\ast = 0$ is a fixed point of $(\TPRS)_{{1}/{2}}$. In particular, we set $\tnabla \iota_V(x^\ast) = P_V(\refl_{g}(z^\ast)) - x^\ast = 0$.  Therefore, we just need to show that $\prox_{\gamma d_V} (\refl_{g}(z^k)) = P_V(\refl_{g}(z^k))$ for all $k \geq 0$.  Note that by definition, $x_{\iota_V}^k = P_V(\refl_{g}(z^k))$ and $\tnabla \iota_V(x_{\iota_V}^k) = \refl_{ g}(z^k) - P_V(\refl_{ g}(z^k)) \in \partial \iota_{V}(x_{\iota_V}^k)$. In view of Proposition~\ref{prop:distprox}, the identity will follow if
\begin{align*}
\gamma \geq d_V(\refl_{g}(z^k)) = \|\refl_{ g}(z^k) - P_V(\refl_{ g}(z^k))\| &=  \|\tnabla \iota_V(x_{\iota_V}^k) \| = \|\tnabla \iota_V(x_{\iota_V}^k) - \tnabla \iota_V(x^\ast)\|.
\end{align*}
However, this is always the case because
\begin{align*}
 \|\tnabla \iota_V(x_{\iota_V}^k) - \tnabla \iota_V(x^\ast)\|^2 + \|x_{\iota_V}^k - x^\ast\|^2 &\stackrel{\eqref{cor:proxcontraction:eq:main}}{\leq} \|\refl_{g}(z^k) - \refl_{g}(z^\ast)\|^2 \leq \|z^k - z^\ast\|^2 \leq \|z^0 - z^\ast\|^2 = \|z^0\|^2 \leq \gamma^2.
\end{align*}
\qed\end{proof}

\begin{theorem}\label{prop:lowerboundobjective}
Assume the notation of Theorem~\ref{thm:optimalFPR}. Then for all $\alpha > {1}/{2}$, there exists a point $z^0\in \cH$ such that if $\gamma \geq \|z^0\|$ and $(z^j)_{j \geq 0}$ is generated by DRS applied to the functions $(f = d_V, g = \iota_U)$, then $d_{V}(x^\ast) = 0$ and 
\begin{align}\label{prop:lowerboundobjective:eq:main}
d_{V}(x_g^k) &= \Omega\left(\frac{1}{(k+1)^\alpha}\right).  
\end{align}
\end{theorem}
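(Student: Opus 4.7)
The plan is to re-use the $z^0$ from the proof of Theorem~\ref{thm:optimalFPR}: take $z^0 = \sqrt{2\alpha e}\,((j+1)^{-\alpha},\,0)_{j \geq 0}$, which lies in $\cH = \ell_2^2(\vN)$ precisely because $\alpha > 1/2$, and set $\gamma \geq \|z^0\|$. By Proposition~\ref{prop:sameproxsequencedist} the DRS iterates for $(f = d_V,\, g = \iota_U)$ coincide with those for the indicator pair $(f = \iota_V,\, g = \iota_U)$, so the block-diagonal PRS dynamics from Example~\ref{example:subspaces} apply: $z^k_j = c_j^k\, r_j\,(\cos(k\theta_j),\,\sin(k\theta_j))$ with $c_j = (j/(j+1))^{1/2}$, $\sin\theta_j = 1/\sqrt{j+1}$, and $r_j = \sqrt{2\alpha e}/(j+1)^\alpha$. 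The claim $d_V(x^\ast) = 0$ is immediate, since the unique fixed point is $z^\ast = 0$, whence $x^\ast = \prox_{\gamma g}(0) = 0 \in V$.

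Since $x_g^k = P_U(z^k)$ has $j$-th block $(c_j^k r_j \cos(k\theta_j),\, 0) \in U_j$, the distance to $V_j = \vR e_{\theta_j}$ in block $j$ equals $|c_j^k r_j \cos(k\theta_j)|\sin\theta_j$, giving
\[
d_V(x_g^k)^2 \;=\; \sum_{j\geq 0} c_j^{2k}\, r_j^2\, \cos^2(k\theta_j)\, \sin^2\theta_j.
\]
I will lower bound this by restricting to the \emph{good} index set $G_k := \{j \geq k : \cos^2(k\theta_j) \geq 1/2\}$. For $j \geq k$ one has $c_j^{2k} = (j/(j+1))^k \geq (k/(k+1))^k \geq e^{-1}$, so
\[
d_V(x_g^k)^2 \;\geq\; \frac{1}{2e}\sum_{j \in G_k} r_j^2 \sin^2\theta_j \;=\; \alpha \sum_{j \in G_k}\frac{1}{(j+1)^{2\alpha+1}},
\]
and it suffices to show $\sum_{j \in G_k}(j+1)^{-(2\alpha+1)} \geq c(\alpha)/(k+1)^{2\alpha}$ for some constant $c(\alpha) > 0$.

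This measure-and-weight estimate is the main step and the main obstacle. Membership in $G_k$ is equivalent to $k\theta_j \bmod \pi \in [-\pi/4,\pi/4]$, and because $\theta_j = \arcsin(1/\sqrt{j+1})$ is strictly decreasing in $j$, as $j$ ranges over $[k,\infty)$ the quantity $k\theta_j$ sweeps monotonically from roughly $\sqrt{k}$ down to $0$, crossing each integer multiple of $\pi$ exactly once. For each integer $m$ with $1 \leq m \leq \sqrt{k}/\pi$ the corresponding good sub-interval is $j+1 \in [k^2/((m+1/4)^2\pi^2),\,k^2/((m-1/4)^2\pi^2)]$, whose length is of order $k^2/m^3$ for $m$ large, while its integrand $(j+1)^{-(2\alpha+1)}$ is of order $(m^2/k^2)^{2\alpha+1}$. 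Their product is of order $m^{4\alpha-1}/k^{4\alpha}$, and summing over $m$ gives $k^{-4\alpha}\sum_{m=1}^{\sqrt{k}/\pi} m^{4\alpha - 1}$, which is of order $k^{-2\alpha}$ since $\alpha > 0$. Consequently $d_V(x_g^k)^2 \geq c(\alpha)/(k+1)^{2\alpha}$, giving $d_V(x_g^k) = \Omega((k+1)^{-\alpha})$. The delicate point is verifying that the geometric expansion of the good intervals (as $m$ decreases) exactly compensates the decay of the weights, producing the same $k^{-2\alpha}$ order as the FPR lower bound of Theorem~\ref{thm:optimalFPR}.
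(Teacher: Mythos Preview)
Your approach is correct and takes a genuinely different route from the paper. Both arguments reduce to the same sum
\[
d_V(x_g^k)^2 \;\geq\; \frac{1}{e}\sum_{j \geq k}\frac{\cos^2(k\theta_j)}{(j+1)^{2\alpha+1}},
\]
but diverge in how they lower-bound it. The paper approximates the sum by the integral $\int_k^\infty \cos^2(k\arccos\sqrt{y/(y+1)})\,(y+1)^{-(2\alpha+1)}\,dy$, performs the change of variable $u = \arccos\sqrt{y/(y+1)}$, and then integrates $\cos(2ku)$ by parts to isolate the main term $\tfrac{1}{4\alpha(k+1)^{2\alpha}}$ and two error terms of order $O(k^{-(2\alpha+1/2)})$; this costs three auxiliary lemmas. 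Your argument is more elementary: you threshold $\cos^2 \geq 1/2$, identify for each integer $m \in [1,\sqrt{k}/\pi]$ the corresponding good interval of $j$'s (length $\asymp k^2/m^3$, weight $\asymp (m/k)^{4\alpha+2}$), and sum the contributions $m^{4\alpha-1}/k^{4\alpha}$ to get $k^{-2\alpha}$. The computation is right, and the key observation that the sum in $m$ is dominated by $m \asymp \sqrt{k}$ (equivalently, $j$ just above $k$) is the same phenomenon that makes the paper's $p_1$ term dominate.

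What the paper's route buys is an explicit leading constant $1/(4\alpha)$ and clean error control; what yours buys is avoiding the integral-approximation machinery and the oscillatory-integral estimate entirely. To turn your sketch into a full proof you would need to (i) replace $\theta_j \approx 1/\sqrt{j+1}$ by the two-sided bound $1/\sqrt{j+1} \leq \theta_j \leq 1/\sqrt{j}$ to locate the good intervals precisely, (ii) check that each good interval has length $\gtrsim k^2/m^3 \geq C\sqrt{k} \gg 1$ so it contains at least a constant fraction of its length in integers, and (iii) restrict $m$ to, say, $[1, c\sqrt{k}]$ for small $c$ so that the good interval lies entirely in $\{j \geq k\}$. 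None of these is difficult, but they are not yet written down.
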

\begin{proof}
 Let $z^0 = (({1}/{(j+1)^\alpha}, 0))_{j \geq 0} \in \cH$.  Now, choose $\gamma ^2 \geq \|z^0\|^2 = \sum_{i=0}^\infty {1}/{(i+1)^{2\alpha}}$.  Define $w^0 \in \cH$ using Equation~\eqref{eq:trigmatrix}:
 \begin{align*}
 w^0 &= (I - T)z^0 = \left(\frac{1}{(j+1)^\alpha}\left(\frac{1}{j+1}, \frac{-\sqrt{j}}{j+1}\right)\right)_{j \geq 0}.
 \end{align*}
 Then $\|w^0_i\| = {1}/{(1+i)^{{(1+2\alpha)}/{2}}}$. 
 
 Now we will calculate $d_{V}(x_g^k) = \|P_Vx_g^k - x_g^k\|$. First, recall that  $T^k = c_0^kR_{k \theta_0} \oplus c_1^k R_{k\theta_1} \oplus \cdots $, where 
\begin{align*}
R_{\theta} = \begin{bmatrix} \cos(\theta) & -\sin(\theta) \\ \sin(\theta) & \cos(\theta) \end{bmatrix}.
\end{align*}
Thus, 
 \begin{align*}
 x_g^k := P_{U}(z^k) = \left(\begin{bmatrix} 1 & 0 \\ 0 & 0\end{bmatrix} c_j^k R_{k\theta} \left(\frac{1}{(j+1)^\alpha}, 0\right) \right)_{j \geq 0} &= \left(\begin{bmatrix} 1 & 0 \\ 0 & 0 \end{bmatrix} c_j^k \frac{1}{(j+1)^\alpha}\left( \cos(k \theta_j), \sin(k\theta_j)\right)  \right)_{j \geq 0} \\
&= \left(c_j^k \frac{\cos(k\theta_j)}{(j+1)^\alpha}(1, 0)  \right)_{j \geq 0}.
 \end{align*}
 Furthermore, from the identity
 \begin{align*}
(P_V)_i &= \begin{bmatrix} \cos^2(\theta_i) & \sin(\theta_i)\cos(\theta_i) \\ \sin(\theta_j)\cos(\theta_i) & \sin^2(\theta_i) \end{bmatrix} = \begin{bmatrix} \frac{i}{i+1} & \frac{\sqrt{i}}{i+1} \\ \frac{\sqrt{i}}{i+1} & \frac{1}{i+1}\end{bmatrix},
\end{align*}
we have
 \begin{align*}
 P_V x_g^k &= \left(c_j^k \frac{\cos(k\theta_j)}{(j+1)^\alpha}\left(\frac{j}{j+1}, \frac{\sqrt{j}}{j+1}\right)  \right)_{j \geq 0}.
 \end{align*}
Thus, the the difference has the following form:
\begin{align*}
x_g^k - P_Vx_g^k &= \left(c_j^k \frac{\cos(k\theta_j)}{(j+1)^\alpha}\left(\frac{1}{j+1}, \frac{-\sqrt{j}}{j+1}\right)  \right)_{j \geq 0}.
\end{align*}
Now we derive the lower bound:
\begin{align*}
d_{V}(x_g^k)^2 =\|x_g^k - P_Vx_g^k\|^2 =\sum_{i=0}^\infty c_i^{2k} \frac{\cos^2(k \theta_i)}{(i+1)^{2\alpha + 1}} &= \sum_{i=0}^\infty c_i^{2k} \frac{\cos^2\left(k \cos^{-1}\left(\sqrt{\frac{i}{i+1}}\right)\right)}{(i+1)^{2\alpha + 1}} \\
&\geq \frac{1}{e} \sum_{i=k}^\infty  \frac{\cos^2\left(k \cos^{-1}\left(\sqrt{\frac{i}{i+1}}\right)\right)}{(i+1)^{2\alpha + 1}} \numberthis \label{eq:nonergodicoptimalitysum}.
\end{align*}

The next several lemmas will focus on estimating the order of the sum in Equation~\eqref{eq:nonergodicoptimalitysum}.  After which, Theorem~\ref{prop:lowerboundobjective} will follow from Equation~\eqref{eq:nonergodicoptimalitysum} and Lemma~\ref{lem:integralestimation}, below. This completes the proof of Theorem~\ref{prop:lowerboundobjective}.\qed
\end{proof}

\begin{lemma}\label{lem:integralapproximationbound}
Let $h : \vR_+ \rightarrow \vR_+$ be a continuously differentiable function such that $h \in L_1(\vR_+)$ and $\sum_{i=1}^\infty h(i) < \infty$.  Then for all positive integers $k$, 
\begin{align*}
\left| \int_{k}^\infty h(y)dy - \sum_{i=k}^\infty h(i)\right| &\leq \sum_{i=k}^\infty \max_{y \in [i, i+1]} \left| h'(y) \right|.
\end{align*}
\end{lemma}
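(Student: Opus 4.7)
The plan is to prove this by splitting the integral and sum over unit intervals $[i,i+1]$ for $i \geq k$, and bounding the error on each interval separately using the mean value theorem (or equivalently, the fundamental theorem of calculus).

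First I would write
\[
\int_k^\infty h(y)\,dy - \sum_{i=k}^\infty h(i) = \sum_{i=k}^\infty \left( \int_i^{i+1} h(y)\,dy - h(i) \right) = \sum_{i=k}^\infty \int_i^{i+1} \bigl(h(y) - h(i)\bigr)\,dy,
\]
where both the integral and the sum are split using the hypotheses $h \in L_1(\vR_+)$ and $\sum_{i=1}^\infty h(i) < \infty$, which guarantee that the left-hand side is a well-defined real number and that the rearrangement is valid. Applying the triangle inequality twice yields
\[
\left| \int_k^\infty h(y)\,dy - \sum_{i=k}^\infty h(i) \right| \leq \sum_{i=k}^\infty \int_i^{i+1} |h(y) - h(i)|\,dy.
\]

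Next I would bound the integrand pointwise. For $y \in [i, i+1]$, since $h$ is continuously differentiable, the fundamental theorem of calculus gives $h(y) - h(i) = \int_i^y h'(t)\,dt$, and hence
\[
|h(y) - h(i)| \leq (y - i)\max_{t \in [i,i+1]}|h'(t)| \leq \max_{t \in [i,i+1]}|h'(t)|.
\]
Integrating this bound over $y \in [i, i+1]$ gives $\int_i^{i+1} |h(y) - h(i)|\,dy \leq \max_{y \in [i,i+1]}|h'(y)|$, and summing over $i \geq k$ yields the claimed inequality.

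There is no real obstacle here; the argument is a standard integral-versus-sum comparison. The only subtlety worth attending to is that the right-hand side might be infinite, in which case the inequality holds trivially, so the argument can be carried out first under the assumption that $\sum_{i=k}^\infty \max_{y \in [i,i+1]}|h'(y)| < \infty$ (and otherwise the bound is vacuous). Note also that the sharper constant $1/2$ could be obtained by using $\int_i^{i+1}(y-i)\,dy = 1/2$, but the stated inequality uses the looser bound $(y-i) \leq 1$, which is all that is needed.
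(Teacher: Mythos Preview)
Your proof is correct and follows essentially the same approach as the paper: split the integral over unit intervals, apply the triangle inequality, and bound $|h(y)-h(i)|$ by $\max_{y\in[i,i+1]}|h'(y)|$ via the mean value theorem. Your write-up is slightly more careful about justifying the rearrangement and handling the case where the right-hand side is infinite, but the argument is otherwise identical.
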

\begin{proof}
We just apply the Mean Value Theorem and combine the integral with the sum
\begin{align*}
\left|\int_{k}^\infty h(y)dy - \sum_{i=k}^\infty h(i)\right| \leq \left| \sum_{i=k}^\infty \int_{i}^{i+1} (h(y) - h(i))dy\right| &\leq \sum_{i=k}^\infty \int_{i}^{i+1} |h(y) - h(i)|dy  \leq \sum_{i=k}^\infty \max_{y \in [i, i+1]} |h'(y)|. \qquad \qed
\end{align*}
\end{proof}

The following Lemma will quantify the deviation of integral from the sum. 

\begin{lemma}\label{lem:integralapproximationofcosinesum}
The following approximation bound holds:
\begin{align}\label{eq:integralsumboundcosine}
\left| \sum_{i=k}^\infty  \frac{\cos^2\left(k \cos^{-1}\left(\sqrt{\frac{i}{i+1}}\right)\right)}{(i+1)^{2\alpha + 1}}  -  \int_{k}^\infty  \frac{\cos^2\left(k \cos^{-1}\left(\sqrt{\frac{y}{y+1}}\right)\right)}{(y+1)^{2\alpha + 1}} dy\right| = O\left(\frac{1}{(k+1)^{2\alpha + {1}/{2}}}\right).
\end{align}
\end{lemma}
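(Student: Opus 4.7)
The plan is to apply the previous lemma (Lemma~\ref{lem:integralapproximationbound}) to the $k$-dependent function
\[
h_k(y) := \frac{\cos^2\!\left(k\cos^{-1}\!\left(\sqrt{\tfrac{y}{y+1}}\right)\right)}{(y+1)^{2\alpha+1}},
\]
which reduces the task to estimating $\sum_{i=k}^\infty \max_{y\in[i,i+1]}|h_k'(y)|$. Since $\cos^2$ is bounded by $1$ and $1/(y+1)^{2\alpha+1}$ is integrable (for $\alpha>1/2$), and $h_k$ is $C^1$ on $[k,\infty)$, the hypotheses of Lemma~\ref{lem:integralapproximationbound} are satisfied.

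The first concrete step is to compute $u'(y)$, where $u(y):=\cos^{-1}(\sqrt{y/(y+1)})$. Differentiating the relation $\sin^2 u = 1/(y+1)$ and using $\sin u \cos u = \sqrt{y}/(y+1)$ yields
\[
u'(y) \;=\; -\frac{1}{2\sqrt{y}\,(y+1)}.
\]
Next, with $f(y):=\cos^2(ku(y))$ and $g(y):=(y+1)^{-(2\alpha+1)}$, the product rule gives
\[
|h_k'(y)| \;\le\; |f'(y)|\,|g(y)| + |f(y)|\,|g'(y)|
\;\le\; \frac{k\,|u'(y)|}{(y+1)^{2\alpha+1}} + \frac{2\alpha+1}{(y+1)^{2\alpha+2}},
\]
where I used $|f'(y)|=|k\sin(2ku(y))u'(y)|\le k|u'(y)|$. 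Plugging in $|u'(y)|=\tfrac{1}{2\sqrt{y}(y+1)}$ and restricting to $y\ge k$ (so that $1/\sqrt{y}\le 1/\sqrt{k}$) gives
\[
|h_k'(y)| \;=\; O\!\left(\frac{\sqrt{k}}{(y+1)^{2\alpha+2}}\right) \qquad\text{for } y\ge k.
\]

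Summing over $i\ge k$ and bounding the tail sum by the corresponding integral,
\[
\sum_{i=k}^\infty \max_{y\in[i,i+1]} |h_k'(y)|
\;=\; O\!\left(\sqrt{k}\sum_{i=k}^\infty \frac{1}{(i+1)^{2\alpha+2}}\right)
\;=\; O\!\left(\frac{\sqrt{k}}{(k+1)^{2\alpha+1}}\right)
\;=\; O\!\left(\frac{1}{(k+1)^{2\alpha+1/2}}\right),
\]
which combined with Lemma~\ref{lem:integralapproximationbound} gives \eqref{eq:integralsumboundcosine}.

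The one subtle point, and the place where most of the thought goes, is the trade-off inside the derivative: differentiating $\cos^2(ku(y))$ produces a factor of $k$, which would naively inflate the bound, but this is precisely compensated by the factor $1/\sqrt{y}\le 1/\sqrt{k}$ coming from $u'(y)$, yielding the crucial exponent $2\alpha+\tfrac12$ (rather than $2\alpha$ or $2\alpha+1$). Everything else is a routine application of the mean value estimate in Lemma~\ref{lem:integralapproximationbound} and a standard integral comparison for a convergent $p$-series.
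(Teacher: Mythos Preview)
Your proposal is correct and follows essentially the same approach as the paper: both apply Lemma~\ref{lem:integralapproximationbound} to the same function, bound $|h'(y)|$ via the product rule, and identify the dominant term coming from the chain-rule factor $k/(\sqrt{y}(y+1))$. The only cosmetic difference is that you pull out $1/\sqrt{y}\le 1/\sqrt{k}$ before summing, whereas the paper keeps $1/\sqrt{y}\le C/\sqrt{y+1}$ inside the summand and bounds $|h'(y)|=O\bigl(k/(y+1)^{2\alpha+5/2}+1/(y+1)^{2\alpha+2}\bigr)$ directly; both routes give the same $O\bigl((k+1)^{-(2\alpha+1/2)}\bigr)$ bound.
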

\begin{proof}
 We will use Lemma~\ref{lem:integralapproximationbound} with 
 \begin{align*}
 h(y) &=  \frac{\cos^2\left(k \cos^{-1}\left(\sqrt{\frac{y}{y+1}}\right)\right)}{(y+1)^{2\alpha + 1}}.
 \end{align*}
 to deduce an upper bound on the absolute value. Indeed, 
 \begin{align*}
| h'(y)| &= \left|\frac{k\sin\left(k\cos^{-1}\left(\sqrt{\frac{y}{y+1}}\right)\right)\cos\left(k\cos^{-1}\left(\sqrt{\frac{y}{y+1}}\right)\right)}{\sqrt{y}(y+1)(y+1)^{2\alpha + 1}} - \frac{\cos^2\left(k\cos^{-1}\left(\sqrt{\frac{y}{y+1}}\right)\right)}{(y+1)^{2\alpha+ 2}}\right|  \\
&= O\left( \frac{k}{(y+1)^{2\alpha + 1 + {3}/{2}}} + \frac{1}{(y+1)^{2\alpha + 2}}\right). 
 \end{align*}
 Therefore, we can bound Equation~\eqref{eq:integralsumboundcosine} by the following sum:
 \begin{align*}
 \sum_{i=k}^\infty \max_{y \in [i, i+1]} |h'(y) | &= O\left(\frac{k}{(k+1)^{2\alpha + {3}/{2}}} + \frac{1}{(k+1)^{2\alpha + 1}}\right) = O\left(\frac{1}{(k+1)^{2\alpha + {1}/{2}}}\right). \qquad \qed
 \end{align*}
\end{proof}

In the following Lemma, we estimate the order of the oscillatory integral approximation to the sum in Equation~\eqref{eq:nonergodicoptimalitysum}. The proof follows by a change of variables and an integration by parts.

\begin{lemma}\label{lem:integralestimation}
The following bound holds: 
\begin{align}\label{eq:lemmaintegralestimation:main}
\sum_{i=k}^\infty  \frac{\cos^2\left(k \cos^{-1}\left(\sqrt{\frac{i}{i+1}}\right)\right)}{(i+1)^{2\alpha + 1}}  dy &= \Omega\left(\frac{1}{(k+1)^{2\alpha}}\right).
\end{align}
\end{lemma}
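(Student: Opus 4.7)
The plan is to reduce the tail sum to an integral via Lemma~\ref{lem:integralapproximationofcosinesum} and then evaluate the resulting oscillatory integral by an explicit change of variables. Since the error in Lemma~\ref{lem:integralapproximationofcosinesum} is $O(1/(k+1)^{2\alpha + 1/2})$, which is of strictly smaller order than $1/(k+1)^{2\alpha}$, it suffices to establish
\[
I(k) := \int_{k}^\infty \frac{\cos^2\left(k\cos^{-1}\left(\sqrt{\frac{y}{y+1}}\right)\right)}{(y+1)^{2\alpha+1}} \, dy \;=\; \Omega\!\left(\frac{1}{(k+1)^{2\alpha}}\right).
\]

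The key step is the substitution $u = \cos^{-1}(\sqrt{y/(y+1)})$, equivalently $y = \cot^2(u)$. Then $\sin^2(u) = 1/(y+1)$ and $dy = -2\cos(u)/\sin^3(u)\,du$, while the integration range $y \in [k,\infty)$ transforms to $u \in (0, u_k]$ with $u_k := \arcsin(1/\sqrt{k+1})$. A direct computation shows
\[
I(k) \;=\; \int_0^{u_k} 2\cos^2(ku)\,\cos(u)\,\sin^{4\alpha - 1}(u)\,du.
\]
Using the identity $\cos^2(ku) = (1+\cos(2ku))/2$, this splits as $I(k) = I_1(k) + I_2(k)$ where $I_1(k)$ is the non-oscillatory term and $I_2(k)$ is the oscillatory one.

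The non-oscillatory integral $I_1(k)$ can be computed in closed form via $w = \sin(u)$:
\[
I_1(k) = \int_0^{u_k} \cos(u)\sin^{4\alpha-1}(u)\,du = \frac{\sin^{4\alpha}(u_k)}{4\alpha} = \frac{1}{4\alpha\,(k+1)^{2\alpha}},
\]
which delivers the desired $\Theta(1/(k+1)^{2\alpha})$ main term.

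The main obstacle is controlling the oscillatory term $I_2(k) = \int_0^{u_k}\cos(2ku)\cos(u)\sin^{4\alpha-1}(u)\,du$ and showing it is of strictly smaller order. I would do this by a single integration by parts, integrating $\cos(2ku)$ to $\sin(2ku)/(2k)$. The boundary contribution at $u=0$ vanishes because $\sin^{4\alpha - 1}(0) = 0$ (using $\alpha > 1/2$), while the boundary contribution at $u_k$ is bounded by $\cos(u_k)\sin^{4\alpha - 1}(u_k)/(2k) = O(1/(k\,(k+1)^{2\alpha - 1/2})) = O(1/(k+1)^{2\alpha + 1/2})$. The remaining integral involves the derivative $\tfrac{d}{du}[\cos(u)\sin^{4\alpha-1}(u)] = O(u^{4\alpha-2})$ near zero, and since $\alpha > 1/2$ the exponent $4\alpha - 2 > -1$ is integrable; the resulting bound is again $O(u_k^{4\alpha-1}/k) = O(1/(k+1)^{2\alpha + 1/2})$. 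Combining these estimates, $I_2(k) = O(1/(k+1)^{2\alpha + 1/2})$, hence
\[
I(k) = \frac{1}{4\alpha\,(k+1)^{2\alpha}} + O\!\left(\frac{1}{(k+1)^{2\alpha + 1/2}}\right) = \Omega\!\left(\frac{1}{(k+1)^{2\alpha}}\right),
\]
and Lemma~\ref{lem:integralapproximationofcosinesum} then gives \eqref{eq:lemmaintegralestimation:main}. \qed
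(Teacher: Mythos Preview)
Your proof is correct and follows essentially the same route as the paper: reduce to the integral via Lemma~\ref{lem:integralapproximationofcosinesum}, substitute $u=\cos^{-1}(\sqrt{y/(y+1)})$ (your $u_k=\arcsin(1/\sqrt{k+1})$ equals the paper's $\rho=\cos^{-1}(\sqrt{k/(k+1)})$), split via $\cos^2(ku)=(1+\cos(2ku))/2$, evaluate the non-oscillatory part exactly as $\frac{1}{4\alpha(k+1)^{2\alpha}}$, and kill the oscillatory part by one integration by parts. The only cosmetic difference is in bounding the bulk term after integration by parts: the paper observes that $\tfrac{d}{du}[\cos(u)\sin^{4\alpha-1}(u)]$ keeps a fixed sign on $[0,\rho]$ and bounds the integral by the endpoint value $\cos(\rho)\sin^{4\alpha-1}(\rho)$, whereas you use the crude pointwise estimate $O(u^{4\alpha-2})$ and integrate; both give $O(1/(k+1)^{2\alpha+1/2})$.
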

\begin{proof} 
Fix $k \geq 1$. We first perform a change of variables $u = \cos^{-1}(\sqrt{{y}/{(y+1)}})$ on the integral approximation of the sum:
\begin{align}\label{eq:changeofvariables}
 \int_{k}^\infty  \frac{\cos^2\left(k \cos^{-1}\left(\sqrt{\frac{y}{y+1}}\right)\right)}{(y+1)^{2\alpha + 1}} dy &= 2\int_{0 }^{\cos^{-1}\left(\sqrt{{k}/{(k+1)}}\right)} \cos^2(k u) \cos(u) \sin^{4\alpha - 1}(u)du.
\end{align}
We will show that the right hand side of Equation~\eqref{eq:changeofvariables} is of order $\Omega\left({1}/{(k+1)^{2\alpha}}\right)$.  Then Equation~\eqref{eq:lemmaintegralestimation:main} will follow by Lemma~\ref{lem:integralapproximationofcosinesum}.

Let $\rho:=\cos^{-1}(\sqrt{k/(k+1)})$. We have
\begin{align*}
2\int_0^{\rho} \cos^2(ku) \cos(u) \sin^{4\alpha -1}(u)d u =\int_0^{\rho} (1+\cos(2ku))  \cos(u) \sin^{4\alpha -1}(u)d u =p_1+p_2+p_3
\end{align*}
where  
\begin{align*}
p_1 & = \int_0^{\rho} 1\cdot \cos(u) \sin^{4\alpha -1}(u)d u = \frac{1}{4\alpha}\sin^{4\alpha}(\rho);\\
p_2 & = \frac{1}{2k}\sin(2k\rho)\cos(\rho) \sin^{4\alpha -1}(\rho);\\
p_3 & = -\frac{1}{2k}\int_0^\rho  \sin(2ku) d(\cos(u) \sin^{4\alpha -1}(u));
\end{align*} 
and we have applied integration by parts for $\int_0^{\rho}\cos(2ku)  \cos(u) \sin^{4\alpha -1}(u)d u=p_2+p_3$. 

Because $\sin(\cos^{-1}(x)) = \sqrt{1-x^2}$, for all $\eta > 0$, we get $$\sin^{\eta}(\rho) =\sin^\eta\cos^{-1}\left(\sqrt{k/(k+1)}\right)= \frac{1}{(k+1)^{\eta/2}}.$$
In addition, we have $\cos(\rho) =\cos \cos^{-1}\left(\sqrt{k/(k+1)}\right)=\sqrt{k/(k+1)} $ and the trivial bounds $|\sin(2k\rho)|\le 1$ and $|\sin(2ku)|\le 1$.

Therefore, the following bounds hold:
\begin{align*}p_1 = \frac{1}{4\alpha(k+1)^{2\alpha}} && \text{and} && 
|p_2| \le\frac{\sqrt{k/(k+1)}}{2k(k+1)^{2\alpha -1/2}}  =O\left(\frac{1}{(k+1)^{2\alpha+{1/2}}}\right).
\end{align*}
In addition, for $p_3$, we have $ d(\cos(u) \sin^{4\alpha -1}(u))= \sin^{4\alpha-2}(u)((4\alpha-1)\cos(u)-\sin^2(u)) du$. Furthermore, for $u\in[0,\rho]$ and $\alpha>1/2$, we have $\sin^{4\alpha-2}(u)\in[0,{1}/{(k+1)^{2\alpha-1}}]$ and the following lower bound: $(4\alpha-1)\cos(u)-\sin^2(u)\ge(4\alpha-1)\cos(\rho)-\sin^2(\rho)=(4\alpha-1)\sqrt{k/(k+1)}-{1}/{(k+1)}>0$ as long as $k\ge 1$. Therefore, we have $\sin^{4\alpha-2}(u)((4\alpha-1)\cos(u)-\sin^2(u))\ge0$ for all $u\in[0,\rho]$ and, thus,  
\begin{align*}
|p_3|&\le\frac{1}{2k}\cos(\rho) \sin^{4\alpha -1}(\rho)=\frac{\sqrt{k/(k+1)}}{2k(k+1)^{2\alpha -1/2}}=O\left(\frac{1}{(k+1)^{2\alpha+{1/2}}}\right).    
\end{align*}
Therefore, $p_1+p_2+p_3\ge p_1 -|p_2|-|p_3|=\Omega\left((k+1)^{-2\alpha}\right).$
\qed\end{proof}

We deduce the following theorem from the sum estimation in Lemma~\ref{lem:integralestimation}:
\begin{theorem}[Lower complexity of DRS]\label{thm:DRSobjectiveslow}
There exists closed, proper, and convex functions $f, g : \cH \rightarrow (-\infty, \infty]$ such that $f$ is $1$-Lipschitz and for every $\alpha > 1/2$, there is a point $z^0 \in \cH$ and $\gamma \in \vR_{++}$ such that if $(z^j)_{j \geq 0}$ is generated by Algorithm~\ref{alg:DRS} with $\lambda_k = {1}/{2}$ for all $k \geq 0$, then 
\begin{align*}
f(x_g^k) + g(x_g^k) - f(x^\ast) - g(x^\ast) &= \Omega\left(\frac{1}{(k+1)^\alpha}\right).
\end{align*}
\end{theorem}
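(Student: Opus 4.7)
The plan is to reduce this theorem directly to Theorem~\ref{prop:lowerboundobjective} by a careful choice of objective functions. I will take $f := d_V$ and $g := \iota_U$, where $U$ and $V$ are the two closed subspaces constructed in Example~\ref{example:subspaces} and used in Theorem~\ref{thm:optimalFPR} and Theorem~\ref{prop:lowerboundobjective}. By Proposition~\ref{prop:distprox}, $f = d_V$ is automatically $1$-Lipschitz, which delivers the Lipschitz hypothesis of the theorem for free. Since $0 \in U \cap V$ and both functions are nonnegative with value $0$ at $0$, I can set $x^\ast = 0$ as a minimizer, so $f(x^\ast) + g(x^\ast) = 0$.

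Next, for any $\alpha > 1/2$, I will take $z^0$ to be the point constructed in the proof of Theorem~\ref{prop:lowerboundobjective} and choose $\gamma \geq \|z^0\|$, so that Proposition~\ref{prop:sameproxsequencedist} applies and the DRS sequence $(z^j)_{j \geq 0}$ generated on $(f,g) = (d_V, \iota_U)$ coincides with the one generated on $(\iota_V, \iota_U)$ analyzed in Theorem~\ref{prop:lowerboundobjective}. The key observation that makes the objective error collapse to the distance is: since $x_g^k = \prox_{\gamma g}(z^k) = P_U(z^k) \in U$, the indicator term satisfies $g(x_g^k) = \iota_U(x_g^k) = 0$. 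Hence
\begin{align*}
f(x_g^k) + g(x_g^k) - f(x^\ast) - g(x^\ast) \;=\; d_V(x_g^k).
\end{align*}

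Applying Proposition~\ref{prop:lowerboundobjective} then yields $d_V(x_g^k) = \Omega(1/(k+1)^\alpha)$, which is exactly the desired lower bound. This completes the proof. There is essentially no obstacle to this reduction; the genuine work — constructing the angles $\theta_i$, the initial point, and estimating the oscillatory sum via Lemmas~\ref{lem:integralapproximationbound}--\ref{lem:integralestimation} — has already been carried out in Theorem~\ref{prop:lowerboundobjective}. The only conceptual point worth emphasizing is why we must use $d_V$ rather than $\iota_V$ in the statement: with $f = \iota_V$, the quantity $f(x_g^k)$ would be $+\infty$ whenever $x_g^k \notin V$, so the objective-error statement would be vacuous; replacing the indicator by the distance function Lipschitzes $f$ and turns the ``infeasibility gap'' $d_V(x_g^k)$ into a genuine finite objective error, while Proposition~\ref{prop:sameproxsequencedist} guarantees that the DRS iterates themselves are unchanged.
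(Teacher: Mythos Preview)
Your proposal is correct and follows essentially the same approach as the paper's proof: set $f = d_V$, $g = \iota_U$, note that $g(x_g^k) = 0$ so the objective error reduces to $d_V(x_g^k)$, and invoke Theorem~\ref{prop:lowerboundobjective} (which you call a Proposition) for the $\Omega(1/(k+1)^\alpha)$ lower bound. The paper compresses this into two lines, citing Lemma~\ref{lem:integralestimation} directly, but the argument is identical; your additional remarks about why $d_V$ rather than $\iota_V$ is needed are correct and clarifying but not required for the proof.
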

\begin{proof}
Assume the setting of Theorem~\ref{prop:lowerboundobjective}.  Then $f = d_V$ and $g = \iota_U$, and by Lemma~\ref{lem:integralestimation}, we have
\begin{align*}
f(x_g^k) + g(x_g^k) - f(x^\ast) - g(x^\ast) = d_V(x_g^k) &= \Omega\left(\frac{1}{(k+1)^\alpha}\right). \qquad \qed
\end{align*}
\end{proof}

Theorem~\ref{thm:DRSobjectiveslow} shows that the DRS algorithm \emph{is nearly as slow} as the subgradient method. We use the word nearly because the subgradient method has complexity $O(1/\sqrt{k+1})$, while DRS has complexity $o(1/\sqrt{k+1})$.  To the best of our knowledge, this is the first \emph{lower complexity} result for DRS algorithm.  Note that Theorem~\ref{thm:DRSobjectiveslow} implies the same lower complexity for the Forward Douglas Rachford splitting algorithm \cite{briceno2012forward}.

\subsection{Optimal objective and FPR rates with Lipschitz derivative}

The following examples show that the objective and FPR rates derived in Theorem~\ref{thm:PPAconvergence} are essentially optimal. The setup of the following counterexample already appeared in \cite[Remarque 6]{brezis1978produits} but the objective function lower bounds were not shown.

\begin{theorem}[Lower complexity of PPA]\label{thm:ppaslow}
There exists a Hilbert space $\cH$, and a closed, proper, and convex function $f$ such that for all $\alpha > {1}/{2}$, there exists $z^0 \in \cH$ such that if $(z^j)_{j \geq 0}$ is generated by PPA (Equation~\eqref{ppaitr}), then
\begin{align*}
\|\prox_{\gamma f}(z^k) - z^k\|^2 \geq \frac{\gamma^2}{(1+2\alpha)e^{2\gamma}(k+\gamma)^{1+2\alpha}}  
&& \mathrm{and} && f(z^{k+1}) - f(x^\ast)  \geq\frac{1}{4\alpha e^{2\gamma}(k+ 1 + \gamma)^{2\alpha}}.
\end{align*}
\end{theorem}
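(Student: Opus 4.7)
The plan is to exhibit a single explicit closed, proper, convex function on an infinite-dimensional Hilbert space with slowly decaying spectral weights, so that PPA decays slowly on a tail of coordinates. Set $\cH = \ell_2(\{1,2,\ldots\})$ and
\[
f(x) = \sum_{i \geq 1} \frac{x_i^2}{2i}.
\]
This $f$ is separable, closed, proper, and convex, with unique minimizer $x^\ast = 0$ and $f(x^\ast) = 0$. Since each summand is a one-dimensional quadratic, the proximal map decouples coordinatewise: $\prox_{\gamma f}(x)_i = i x_i/(i+\gamma)$, so the PPA iteration yields $z^k_i = z^0_i\bigl(i/(i+\gamma)\bigr)^k$. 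For a fixed $\alpha > 1/2$, choose $z^0_i = i^{-\alpha}$, which lies in $\ell_2$ because $\sum_{i \geq 1} i^{-2\alpha} < \infty$.

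Next, I would write the two quantities of interest in explicit form:
\[
\|\prox_{\gamma f}(z^k) - z^k\|^2 = \sum_{i \geq 1} \frac{\gamma^2}{i^{2\alpha}(i+\gamma)^2}\left(\frac{i}{i+\gamma}\right)^{2k}, \qquad f(z^{k+1}) = \sum_{i \geq 1} \frac{1}{2i^{1+2\alpha}}\left(\frac{i}{i+\gamma}\right)^{2k+2}.
\]
The elementary inequality $(1+\gamma/i)^n \leq e^{\gamma n/i}$ gives $\bigl(i/(i+\gamma)\bigr)^n \geq e^{-\gamma n/i}$; in particular this is $\geq e^{-\gamma}$ whenever $i \geq n$. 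I use this to keep only a tail of each sum where the geometric-like factor is bounded below by $e^{-2\gamma}$.

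For the FPR, truncate to $i \geq k$ and bound $i^{2\alpha}(i+\gamma)^2 \leq (i+\gamma)^{2+2\alpha}$, then compare the sum to an integral:
\[
\|\prox_{\gamma f}(z^k) - z^k\|^2 \geq \frac{\gamma^2}{e^{2\gamma}} \sum_{i \geq k}\frac{1}{(i+\gamma)^{2+2\alpha}} \geq \frac{\gamma^2}{e^{2\gamma}}\int_k^\infty \frac{dx}{(x+\gamma)^{2+2\alpha}} = \frac{\gamma^2}{(1+2\alpha)e^{2\gamma}(k+\gamma)^{1+2\alpha}},
\]
which is exactly the claimed bound. For the objective, truncate to $i \geq k+1$ (so the power $2k+2$ gives the factor $e^{-2\gamma}$), then use $\sum_{i \geq k+1} i^{-1-2\alpha} \geq \int_{k+1}^\infty x^{-1-2\alpha}\,dx = (k+1)^{-2\alpha}/(2\alpha)$, and finally weaken $(k+1)^{2\alpha} \leq (k+1+\gamma)^{2\alpha}$ in the denominator. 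Since $f(x^\ast) = 0$, this yields the advertised lower bound on $f(z^{k+1}) - f(x^\ast)$.

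The main obstacle is purely bookkeeping: lining up the truncation threshold with the exponent so that the $e^{-\gamma n/i}$ factor contributes exactly $e^{-2\gamma}$ (forcing $i \geq k$ for the FPR sum with exponent $2k$ and $i \geq k+1$ for the objective sum with exponent $2k+2$), and matching the integration limits with the $(k+\gamma)$ and $(k+1+\gamma)$ shifts appearing in the target bounds. Once those index shifts are handled, the result is a direct integral lower bound on the tail of a $p$-series, and no further convexity or operator-theoretic ingredient is needed beyond the diagonal structure of $f$.
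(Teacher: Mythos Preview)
Your proposal is correct and follows essentially the same approach as the paper: the same diagonal quadratic $f(x)=\sum_i x_i^2/(2i)$ on $\ell_2$, the same coordinatewise PPA formula, the same tail-truncation via $(i/(i+\gamma))^n\ge e^{-\gamma n/i}$, and the same integral comparison. The only cosmetic difference is the initial point---the paper takes $z^0_i=(i+\gamma)^{-\alpha}$ rather than $i^{-\alpha}$---which changes the intermediate sums slightly but yields the identical final bounds.
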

\begin{proof}
Let $\cH = \ell_2(\vR)$, and define a linear map $A : \cH \rightarrow \cH$:
\begin{align*}
A\left(z_1, z_2, \cdots, z_n, \cdots\right) &= \left(z_1, \frac{z_2}{2}, \cdots, \frac{z_n}{n}, \cdots\right).
\end{align*}
For all $z \in \cH$, define $f(x) = (1/2)\dotp{Az, z}$. Thus, we have the proximal identity for $f$ and 
\begin{align*}
\prox_{\gamma f}(z) &= (I+ \gamma A)^{-1}(z) = \left( \frac{j}{j+\gamma}z_j\right)_{j \geq 1} && \mathrm{and} && (I -\prox_{\gamma f})(z) = \left(\frac{\gamma}{j+\gamma}z_j\right)_{j \geq 1}.
\end{align*}

Now let $z^0 = ({1}/{(j+\gamma)^\alpha})_{j \geq 1} \in \cH$, and set $T = \prox_{\gamma f}$.  Then we get the following FPR lower bound:
\begin{align*}
\|z^{k+1} - z^k\|^2 = \|T^k(T - I)z^0\|^2 = \sum_{i=1}^\infty \left(\frac{i}{i+\gamma}\right)^{2k} \frac{\gamma^2}{(i+\gamma)^{2 + 2\alpha}} &\geq  \sum_{i=k}^\infty \left(\frac{i}{i+\gamma}\right)^{2k} \frac{\gamma^2}{(i+\gamma)^{2 + 2\alpha}} \\
&\geq \frac{\gamma^2}{(1+2\alpha)e^{2\gamma}(k+\gamma)^{1+2\alpha}}.
\end{align*}
Furthermore, the objective lower bound holds
\begin{align*}
f(z^{k+1})  - f(x^\ast) = \frac{1}{2}\dotp{Az^{k+1}, z^{k+1}} = \frac{1}{2}\sum_{i=1}^\infty \frac{1}{i}  \left(\frac{i}{i+\gamma}\right)^{2(k+1)} \frac{1}{(i+\gamma)^{2\alpha}} &\geq \frac{1}{2}\sum_{i=k+1}^\infty  \left(\frac{i}{i+\gamma}\right)^{2(k+1)} \frac{1}{(i+\gamma)^{1+ 2\alpha}} \\
&\geq \frac{1}{4\alpha e^{2\gamma}(k+ 1 + \gamma)^{2\alpha}}. \qquad \qed
\end{align*}
\end{proof}

\section{From relaxed PRS to relaxed ADMM}\label{sec:DRSADMM}

It is well known that ADMM is equivalent to DRS applied to the Lagrange dual of Problem~\eqref{eq:simplelinearconstrained} \cite{gabay1983chapter}. Thus, if we let $d_f(w) := f^\ast(A^\ast w)$ and $d_g(w) := g^\ast(B^\ast w) - \dotp{w, b}$, then relaxed ADMM is equivalent to relaxed PRS applied to the following problem:
\begin{align}\label{eq:dualproblem2}
\Min_{w \in \cG} & \; d_f(w) + d_g(w).
\end{align}

{We make two assumptions regarding $d_f$ and $d_g$:
\begin{assump}[Solution existence]\label{assump:additivesub}
Functions $f, g : \cH \rightarrow (-\infty, \infty]$ satisfy
\begin{align}
\zer(\partial d_f + \partial d_g) \neq \emptyset.
\end{align}
\end{assump}
This is a restatement of Assumption~\ref{assump:additivesub}, which we in our analysis of the primal case.

\begin{assump}\label{assump:precomposgradient}
The following differentiation rule holds:
\begin{align*}
\partial d_f(x) = A^\ast \circ (\partial f^\ast) \circ A && \mathrm{and} && \partial d_g(x) = B^\ast \circ (\partial g^\ast) \circ B - b.
\end{align*}
\end{assump}
{See \cite[Theorem 16.37]{bauschke2011convex} for conditions that imply this identity, of which the weakest are  $0 \in \mathrm{sri}(\range(A^\ast) - \dom(f^\ast))$ and $0 \in \mathrm{sri}(\range(B^\ast) - \dom(g^\ast))$, where $\mathrm{sri}$ is the strong relative interior of a convex set.} {\color{blue}This assumption may seem strong, but it is standard in the analysis of ADMM because it implies the dual proximal operator identities in~\eqref{eq:proxd2}.}}

Given an initial vector $z^0 \in \cG$, Lemma~\ref{prop:DRSmainidentity} shows that at each iteration relaxed PRS performs the following computations:
\begin{align}\label{eq:DRSADMM}
\begin{cases}
w_{d_g}^k &= \prox_{\gamma d_g}(z^k); \\
w_{d_f}^k &= \prox_{\gamma d_f}(2w_{d_g}^k - z^k);  \\
z^{k+1} &= z^k + 2\lambda_k(w_{d_f}^k - w_{d_g}^k).
\end{cases}
\end{align}
In order to apply the relaxed PRS algorithm, we need to compute the proximal operators of the dual functions $d_f$ and $d_g$.

\begin{lemma}[Proximity operators on the dual]\label{lem:proxdual}
Let $w, v\in \cG$.  Then the update formulas $w^+ = \prox_{\gamma d_f}(w)$ and $v^+ = \prox_{\gamma d_g}(v)$ are equivalent to the following computations
\begin{align*}
\begin{cases}  x^+ = \argmin_{x \in \cH_1} f(x) - \dotp{w, Ax} + \frac{\gamma}{2} \|Ax\|^2; \\
 w^+ = w - \gamma Ax^+.  
\end{cases} && \mathrm{and} &&  \begin{cases} y^+ = \argmin_{y \in \cH_2} g(y) - \dotp{v, By - b} + \frac{\gamma}{2} \|By-b\|^2; \\
 v^+ = v - \gamma(By^+ - b).
\end{cases} \numberthis \label{eq:proxd2}
\end{align*}
respectively.  In addition, the subgradient inclusions hold: $A^\ast w^+ \in \partial f(x^+)$ and $ B^\ast v^+ \in \partial g(y^+)$. Finally, $w^+$ and $v^+$ are independent of the choice of $x^+$ and $y^+$, respectively, even if they are not unique solutions to the minimization subproblems.
\end{lemma}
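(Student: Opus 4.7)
The plan is to reduce everything to the optimality characterization of the proximal operator from Lemma~\ref{lem:optimalityofprox}, combined with the precomposition subdifferential chain rule $\partial d_f(\cdot) = A\, \partial f^\ast(A^\ast \cdot)$ provided by Assumption~\ref{assump:precomposgradient}, and the Fenchel--Young equivalence $x \in \partial f^\ast(u) \iff u \in \partial f(x)$.

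For the $d_f$ case I would first observe that $w^+ = \prox_{\gamma d_f}(w)$ is equivalent to $\gamma^{-1}(w - w^+) \in \partial d_f(w^+) = A\, \partial f^\ast(A^\ast w^+)$, so there exists some $x^+ \in \partial f^\ast(A^\ast w^+)$ with $\gamma^{-1}(w - w^+) = A x^+$, equivalently $w^+ = w - \gamma A x^+$. Fenchel--Young converts $x^+ \in \partial f^\ast(A^\ast w^+)$ into $A^\ast w^+ \in \partial f(x^+)$, and substituting $w^+ = w - \gamma A x^+$ rewrites this as $A^\ast w - \gamma A^\ast A x^+ \in \partial f(x^+)$, which is precisely the first-order optimality condition for $x^+ = \argmin_{x} f(x) - \dotp{w, Ax} + (\gamma/2)\|Ax\|^2$. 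Running the chain of equivalences in the other direction yields the converse, and the inclusion $A^\ast w^+ \in \partial f(x^+)$ drops out as a byproduct.

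The $d_g$ computation is structurally identical, with only two bookkeeping changes from the linear term $-\dotp{v, b}$ in $d_g$: one has $\partial d_g(v^+) = B\,\partial g^\ast(B^\ast v^+) - b$, so the inclusion $\gamma^{-1}(v - v^+) \in \partial d_g(v^+)$ rearranges to $\gamma^{-1}(v - v^+) + b = B y^+$ for some $y^+ \in \partial g^\ast(B^\ast v^+)$, i.e.\ $v^+ = v - \gamma(B y^+ - b)$. Fenchel--Young then turns $y^+ \in \partial g^\ast(B^\ast v^+)$ into $B^\ast v^+ \in \partial g(y^+)$, which is the optimality condition for $y^+ = \argmin_{y} g(y) - \dotp{v, By - b} + (\gamma/2)\|By - b\|^2$.

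Uniqueness and independence are then immediate: $\prox_{\gamma d_f}$ and $\prox_{\gamma d_g}$ are single-valued because $d_f, d_g$ are closed proper convex, so $w^+$ and $v^+$ are determined by $w$ and $v$ alone; any minimizer $x^+$ (resp.\ $y^+$) of the inner subproblem must satisfy $A x^+ = \gamma^{-1}(w - w^+)$ (resp.\ $B y^+ - b = \gamma^{-1}(v - v^+)$), so the explicit formulas return the same $w^+$ (resp.\ $v^+$) regardless of which minimizer is selected. The only subtle point is the genuine reliance on Assumption~\ref{assump:precomposgradient}: a generic chain rule only gives the inclusion $\partial d_f(w^+) \supseteq A\,\partial f^\ast(A^\ast w^+)$, which is insufficient to extract the inner variable $x^+$; I would flag this as the sole step where a regularity hypothesis is actually used.
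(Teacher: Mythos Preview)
Your proof is correct. The paper states Lemma~\ref{lem:proxdual} without proof, treating it as a standard computation, so there is nothing to compare against; your argument via Lemma~\ref{lem:optimalityofprox}, the chain rule of Assumption~\ref{assump:precomposgradient}, and the Fenchel--Young equivalence is exactly the natural route and all steps check out. One minor remark: the paper's Assumption~\ref{assump:precomposgradient} literally writes $\partial d_f = A^\ast \circ (\partial f^\ast) \circ A$, but since $d_f(w) = f^\ast(A^\ast w)$ with $A : \cH_1 \to \cG$, the correct chain rule reads $\partial d_f(w) = A\,\partial f^\ast(A^\ast w)$, which is the form you (correctly) use; this is evidently a typo in the paper, and your interpretation is the only one consistent with the domains and with the subsequent use in Proposition~\ref{prop:relaxedADMM}.
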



We can use Lemma~\ref{lem:proxdual} to derive the relaxed form of ADMM in Algorithm~\ref{alg:ADMM}.  Note that this form of ADMM eliminates the ``hidden variable" sequence $(z^j)_{j \geq 0}$ in Equation~(\ref{eq:DRSADMM}). This following derivation is not new, but is included for the sake of completeness. See \cite{gabay1983chapter} for the original derivation.
\begin{proposition}[Relaxed ADMM]\label{prop:relaxedADMM}
Let $z^0 \in \cG$, and let $(z^j)_{j \geq 0}$ be generated by the relaxed PRS algorithm applied to the dual formulation in Equation~(\ref{eq:dualproblem2}).  Choose initial points $w_{d_g}^{-1} = z^0, x^{-1} = 0$ and $y^{-1} = 0$ and initial relaxation $\lambda_{-1} = {1}/{2}$. Then we have the following identities starting from $k = -1$:
\begin{align*}
y^{k+1} &= \argmin_{y \in \cH_2} g(y) - \dotp{w_{d_g}^{k},Ax^{k} +  By - b} + \frac{\gamma}{2} \|Ax^k + By - b + (2\lambda_k - 1)(Ax^{k} + By^{k} -b) \|^2 \\
w_{d_g}^{k+1} &= w_{d_g}^{k} - \gamma (Ax^{k} +  By^{k+1} - b) - \gamma(2\lambda_k - 1)(Ax^{k} + By^{k} - b) \\
x^{k+1} &= \argmin_{x \in \cH_1} f(x) - \dotp{w_{d_g}^{k+1}, Ax + By^{k+1} - b} + \frac{\gamma}{2} \|Ax + By^{k+1} - b\|^2 \\
w_{d_f}^{k+1} &= w_{d_g}^{k+1} - \gamma (Ax^{k+1} + By^{k+1} - b)
\end{align*}
\end{proposition}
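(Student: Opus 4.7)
The plan is to start from the relaxed PRS iteration in \eqref{eq:DRSADMM} applied to the dual functions $d_f$ and $d_g$, and systematically use Lemma~\ref{lem:proxdual} to eliminate the hidden sequence $(z^j)_{j \geq 0}$ in favor of the primal variables $(x^j, y^j)$ and the dual variable $w_{d_g}^j$. Specifically, I will verify and then maintain inductively a pair of invariants: $w_{d_g}^k = z^k - \gamma(By^k - b)$, obtained by identifying the primal $y$-iterate produced by $\prox_{\gamma d_g}(z^k)$ via Lemma~\ref{lem:proxdual}, and $w_{d_f}^k = w_{d_g}^k - \gamma(Ax^k + By^k - b)$, obtained by identifying the primal $x$-iterate produced by $\prox_{\gamma d_f}(2w_{d_g}^k - z^k)$ together with the simplification $2w_{d_g}^k - z^k = w_{d_g}^k - \gamma(By^k - b)$.

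First, I would handle the base case $k = -1$: with $w_{d_g}^{-1} := z^0$, $x^{-1} = y^{-1} = 0$, and $\lambda_{-1} = 1/2$, direct substitution into the formulas in the statement reproduces exactly one PRS step from $z^0$, yielding $y^0, w_{d_g}^0, x^0, w_{d_f}^0$ in the form predicted by Lemma~\ref{lem:proxdual}. Next, for the inductive step, I would use the two invariants to expand $z^{k+1} = z^k + 2\lambda_k(w_{d_f}^k - w_{d_g}^k)$, which after elementary algebra yields the key identity
\begin{align*}
z^{k+1} = w_{d_g}^k - \gamma\bigl[Ax^k + (2\lambda_k - 1)(Ax^k + By^k - b)\bigr].
\end{align*}

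Then I would compute $y^{k+1}$ via Lemma~\ref{lem:proxdual} applied to $\prox_{\gamma d_g}(z^{k+1})$. Substituting the above identity into $-\dotp{z^{k+1}, By - b} + (\gamma/2)\|By - b\|^2$, dropping $y$-independent constants, and completing the square produces precisely $(\gamma/2)\|Ax^k + By - b + (2\lambda_k - 1)(Ax^k + By^k - b)\|^2$; the $-\dotp{w_{d_g}^k, Ax^k}$ term is constant in $y$ and can be added back in order to match the inner product $-\dotp{w_{d_g}^k, Ax^k + By - b}$ of the statement. The formula for $w_{d_g}^{k+1}$ then follows immediately from the first invariant at step $k+1$, $w_{d_g}^{k+1} = z^{k+1} - \gamma(By^{k+1} - b)$, after substituting the expression for $z^{k+1}$. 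For $x^{k+1}$, I would use $2w_{d_g}^{k+1} - z^{k+1} = w_{d_g}^{k+1} - \gamma(By^{k+1} - b)$ in Lemma~\ref{lem:proxdual} applied to $\prox_{\gamma d_f}$ and again complete the square. Finally, $w_{d_f}^{k+1} = w_{d_g}^{k+1} - \gamma(Ax^{k+1} + By^{k+1} - b)$ is simply the second invariant at step $k+1$.

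The main obstacle is the algebraic bookkeeping in completing the square for the $y^{k+1}$-update: one must correctly absorb the linear cross term $\gamma\dotp{Ax^k + (2\lambda_k - 1)(Ax^k + By^k - b), By - b}$ produced by substituting $z^{k+1}$, and verify that the leftover $y$-independent quantity can be discarded. Once this identity is isolated, the remaining steps are routine applications of Lemma~\ref{lem:proxdual} and the linearity of the PRS $z$-update, so the proof reduces to careful verification rather than any new idea.
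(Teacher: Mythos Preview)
Your proposal is correct and follows essentially the same approach as the paper: both derive the invariants $w_{d_g}^k = z^k - \gamma(By^k - b)$ and $w_{d_f}^k = w_{d_g}^k - \gamma(Ax^k + By^k - b)$ from Lemma~\ref{lem:proxdual}, use them to obtain the key identity $z^{k+1} = w_{d_g}^k - \gamma Ax^k - \gamma(2\lambda_k - 1)(Ax^k + By^k - b)$, and then complete the square in the $y$- and $x$-subproblems. Your explicit inductive framing with the base case $k=-1$ is slightly more careful than the paper's presentation, but the substance is identical.
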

\begin{proof}
See Appendix~\ref{app:DRSADMM}.\qed
\end{proof}

\begin{remark}
Proposition~\ref{prop:relaxedADMM} proves that $w_{d_f}^{k+1} = w_{d_g}^{k+1} - \gamma (Ax^{k+1} + By^{k+1} - b)$. Recall that by Equation~(\ref{eq:DRSADMM}), $z^{k+1} - z^k = 2\lambda_k(w_{d_f}^{k} - w_{d_g}^{k})$.  Therefore, it follows that
\begin{align}\label{eq:ADMMfeasibilityFPR}
z^{k+1} - z^k &= -2\gamma \lambda_k( Ax^{k} + By^k - b).
\end{align}

\end{remark}

%
%

\subsection{Dual feasibility convergence rates}\label{sec:ADMMdualrate}

We can apply the results of Section~\ref{sec:DRSconvergence} to deduce convergence rates for the dual objective functions.  Instead of restating those theorems, we just list the following bounds on the feasibility of the primal iterates.

\begin{theorem}\label{thm:ADMMdualconvergence}
Suppose that $(z^j)_{j \geq 0}$ is generated by Algorithm~\ref{alg:ADMM}, and let $(\lambda_j)_{j \geq 0} \subseteq (0, 1]$. Then the following convergence rates hold:
\begin{enumerate}
\item \label{thm:ADMMdualconvergence:part:ergodic} {\bf Ergodic convergence:} 
The feasibility convergence rate holds:
\begin{align*}
\|A\overline{x}^k + B\overline{y}^k - b\|^2 &= \frac{4\|z^0 - z^\ast\|^2}{\gamma\Lambda_k^2}. \numberthis \label{thm:ADMMdualconvergence:eq:ergodicfeasibility}
\end{align*}
\item  \label{thm:ADMMdualconvergence:part:nonergodic} {\bf Nonergodic convergence:} Suppose that $\underline{\tau} = \inf_{j \geq 0} \lambda_j(1-\lambda_j) > 0$. Then
\begin{align}\label{thm:ADMMdualconvergence:eq:nonergodicfeasibility}
\|Ax^k + By^k - b\|^2 \leq \frac{\|z^0 - z^\ast\|^2}{4\gamma^2 \underline{\tau}(k+1)} && \mathrm{and} &&  \|Ax^k + By^k - b\|^2 = o\left(\frac{1}{k+1}\right).
\end{align}
\end{enumerate}
\end{theorem}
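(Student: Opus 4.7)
The plan is to reduce both claims to the FPR rates established for relaxed PRS, using Equation~\eqref{eq:ADMMfeasibilityFPR} as the sole bridge between the primal feasibility residual and the dual fixed-point residual. Writing $T := \TPRS$ applied to $d_f, d_g$ and using Lemma~\ref{prop:DRSmainidentity}, we have $Tz^k - z^k = 2(w_{d_f}^k - w_{d_g}^k)$ and $z^{k+1} - z^k = \lambda_k(Tz^k - z^k)$. Combining these with the identity $w_{d_f}^k - w_{d_g}^k = -\gamma(Ax^k + By^k - b)$ from Proposition~\ref{prop:relaxedADMM} yields the master identities
\begin{align*}
Tz^k - z^k = -2\gamma(Ax^k + By^k - b) && \text{and} && z^{k+1} - z^k = -2\gamma\lambda_k(Ax^k + By^k - b).
\end{align*}
Everything after this is essentially plug-and-play.

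For Part~\ref{thm:ADMMdualconvergence:part:nonergodic} (nonergodic), I would take norms on the first identity to get $\|Ax^k + By^k - b\|^2 = \tfrac{1}{4\gamma^2}\|Tz^k - z^k\|^2$, then invoke Corollary~\ref{cor:DRSaveragedconvergence} applied to the PRS operator of $d_f + d_g$: under the assumption $\underline{\tau} > 0$ this gives both the explicit bound $\|Tz^k - z^k\|^2 \leq \|z^0 - z^\ast\|^2/(\underline{\tau}(k+1))$ and the little-$o$ improvement $\|Tz^k - z^k\|^2 = o(1/(k+1))$, which divided by $4\gamma^2$ produce exactly~\eqref{thm:ADMMdualconvergence:eq:nonergodicfeasibility}.

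For Part~\ref{thm:ADMMdualconvergence:part:ergodic} (ergodic), I would telescope the second identity: summing $z^{i+1} - z^i = -2\gamma\lambda_i(Ax^i + By^i - b)$ from $i=0$ to $k$ gives
\begin{align*}
z^{k+1} - z^0 = -2\gamma \Lambda_k\bigl(A\overline{x}^k + B\overline{y}^k - b\bigr),
\end{align*}
so $\|A\overline{x}^k + B\overline{y}^k - b\| = \|z^{k+1} - z^0\|/(2\gamma\Lambda_k)$. Fej\'er monotonicity of $(z^j)_{j\geq 0}$ with respect to $\Fix(T)$ (Corollary~\ref{cor:DRSaveragedconvergence}, Part~\ref{prop:averagedconvergence:eq:mono}) combined with the triangle inequality gives $\|z^{k+1} - z^0\| \leq 2\|z^0 - z^\ast\|$, and squaring delivers~\eqref{thm:ADMMdualconvergence:eq:ergodicfeasibility}. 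Equivalently, one can interpret $-\gamma(A\overline{x}^k + B\overline{y}^k - b)$ as $\overline{w}_{d_f}^k - \overline{w}_{d_g}^k$ and apply Theorem~\ref{eq:ergodicFPR} with $x^i = w_{d_f}^i$ and $y^i = w_{d_g}^i$, which gives the same bound.

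There is no real obstacle here: Assumption~\ref{assump:additivesub} and Assumption~\ref{assump:precomposgradient} guarantee that $\Fix(T) \ne \emptyset$ and that the dual iterates are well-defined via Lemma~\ref{lem:proxdual}, so Corollary~\ref{cor:DRSaveragedconvergence} and Theorem~\ref{eq:ergodicFPR} apply without modification. The only thing to double-check is the powers of $\gamma$ appearing in the ergodic bound (the derivation naturally produces $\gamma^{-2}$ in front of $\|z^0 - z^\ast\|^2/\Lambda_k^2$, so~\eqref{thm:ADMMdualconvergence:eq:ergodicfeasibility} should presumably read $4\|z^0-z^\ast\|^2/(\gamma^2\Lambda_k^2)$).
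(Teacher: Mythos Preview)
Your proof is correct and matches the paper's approach exactly: the paper's proof is a two-line invocation of Corollary~\ref{cor:DRSaveragedconvergence} together with the FPR identity~\eqref{eq:ADMMfeasibilityFPR}, and you have spelled this out in full detail. Your observation about the power of $\gamma$ in~\eqref{thm:ADMMdualconvergence:eq:ergodicfeasibility} is also correct---the derivation naturally yields $\gamma^2$ in the denominator (and ``$\leq$'' rather than ``$=$''), so the stated bound contains a typo.
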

\begin{proof}
Parts~\ref{thm:ADMMdualconvergence:part:ergodic} and~\ref{thm:ADMMdualconvergence:part:nonergodic} are straightforward applications of Corollary~\ref{cor:DRSaveragedconvergence}. and \cut{Theorem~\ref{thm:drsergodic}, Theorem~\ref{thm:drsnonergodic} and}the FPR identity:
\begin{align*}
z^{k} - z^{k+1} \stackrel{\eqref{eq:ADMMfeasibilityFPR}}{=} 2\gamma \lambda_k(Ax^k + By^k - b).\qquad \qed
\end{align*}
\end{proof}

\subsection{Converting dual inequalities to primal inequalities}\label{sec:convertinequalities}

The ADMM algorithm generates $5$ sequences of iterates:
\begin{align*}
(z^j)_{j \geq 0}, (w_{d_f}^j)_{j \geq 0}, \text{ and } (w_{d_g}^j)_{j \geq 0} \subseteq \cG && \text{and} &&  (x^j)_{j \geq 0} \in \cH_1, (y^j)_{j \geq 0} \in \cH_2.
\end{align*}
The dual variables do not necessarily have a meaningful interpretation, so it is desirable to derive convergence rates involving the primal variables. In this section we will apply the Fenchel-Young inequality \cite[Proposition 16.9]{bauschke2011convex} to convert the dual objective into a primal expression.

The following two propositions prove two fundamental inequalities that bound the primal objective.

\begin{proposition}[ADMM primal upper fundamental inequality]\label{prop:ADMMupper}
Let $z^\ast$ be a fixed point of $\TPRS$ and let $w^\ast = \prox_{\gamma d_g}(z^\ast)$. Then for all $k \geq 0$, we have the bound:
\begin{align*}
4\gamma \lambda_k (f(x^k) + g(y^k) &- f(x^\ast) - g(y^\ast))\\
&\leq \|z^k - (z^\ast - w^\ast)\|^2 - \|z^{k+1} - (z^\ast - w^\ast)\|^2  + \left(1- \frac{1}{\lambda_k} \right) \|z^{k} - z^{k+1}\|^2. \numberthis \label{prop:ADMMupper:eq:main}
\end{align*}
\end{proposition}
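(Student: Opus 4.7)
The plan is to mimic the structure of the primal upper fundamental inequality (Proposition~\ref{prop:DRSupper}), with the primal variables $x^k, y^k$ playing the roles of $x_f, x_g$ and the dual iterates $z^k$ carrying the telescoping role. By Lemma~\ref{lem:proxdual} we have $A^\ast w_{d_f}^k \in \partial f(x^k)$ and $B^\ast w_{d_g}^k \in \partial g(y^k)$, so summing the primal subgradient inequalities for $f$ at $x^k$ and $g$ at $y^k$ yields
\begin{align*}
f(x^k) + g(y^k) - f(x^\ast) - g(y^\ast) \leq \dotp{w_{d_f}^k, A(x^k - x^\ast)} + \dotp{w_{d_g}^k, B(y^k - y^\ast)}.
\end{align*}
Using primal feasibility $Ax^\ast + By^\ast = b$, I would rearrange the right-hand side as $\dotp{w_{d_f}^k, Ax^k} + \dotp{w_{d_g}^k, By^k - b} + \dotp{w_{d_g}^k - w_{d_f}^k, Ax^\ast}$.

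The algebraic identities I need come from Lemma~\ref{lem:proxdual} applied to the DRS iteration on the dual, namely
\begin{align*}
Ax^k = \frac{1}{\gamma}(2w_{d_g}^k - z^k - w_{d_f}^k), && By^k - b = \frac{1}{\gamma}(z^k - w_{d_g}^k),
\end{align*}
together with a parallel application to the fixed point $w^\ast = \prox_{\gamma d_g}(z^\ast)$, which combined with $Ax^\ast + By^\ast = b$ yields $\gamma A x^\ast = w^\ast - z^\ast$. Substituting the first two identities into the two inner products above, the cross terms collapse via $2\dotp{w_{d_f}^k, w_{d_g}^k} - \|w_{d_f}^k\|^2 - \|w_{d_g}^k\|^2 = -\|w_{d_g}^k - w_{d_f}^k\|^2$, and substituting $\gamma Ax^\ast = w^\ast - z^\ast$ into the remaining term reshapes the bound to
\begin{align*}
\frac{1}{\gamma}\dotp{w_{d_g}^k - w_{d_f}^k, z^k - (z^\ast - w^\ast)} - \frac{1}{\gamma}\|w_{d_g}^k - w_{d_f}^k\|^2.
\end{align*}

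Finally, the relaxed PRS identity $w_{d_g}^k - w_{d_f}^k = (z^k - z^{k+1})/(2\lambda_k)$ expresses everything in terms of $z^k$, $z^{k+1}$, and $z^\ast - w^\ast$; multiplying through by $4\gamma\lambda_k$ and applying the standard identity $2\dotp{z^k - z^{k+1}, z^k - \tilde z} = \|z^k - \tilde z\|^2 - \|z^{k+1} - \tilde z\|^2 + \|z^{k+1} - z^k\|^2$ with $\tilde z = z^\ast - w^\ast$ yields exactly the claimed bound. The main obstacle I anticipate is spotting why the shift $\tilde z = z^\ast - w^\ast$ appears in the final inequality rather than $z^\ast$ or $w^\ast$ alone: this shift is forced by the identity $\gamma A x^\ast = w^\ast - z^\ast$, which itself follows from Lemma~\ref{lem:proxdual} applied to the fixed-point relation $w^\ast = \prox_{\gamma d_g}(z^\ast)$ together with primal feasibility. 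Once this identification is made, the remainder is telescoping algebra that closely parallels the derivation of Proposition~\ref{prop:DRSupper}.
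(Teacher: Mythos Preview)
Your proof is correct, but it takes a genuinely different route from the paper's. The paper first establishes an \emph{identity} (Proposition~\ref{prop:ADMMdualtoprimalconversion}) relating the primal objective error to the negative dual objective error via the Fenchel--Young equality, and then bounds the negative dual term from above using the dual lower fundamental inequality (Proposition~\ref{prop:DRSlower} applied to $d_f + d_g$). You instead work entirely on the primal side: you apply the primal subgradient inequalities for $f$ and $g$ directly, substitute the proximal identities $\gamma Ax^k = 2w_{d_g}^k - z^k - w_{d_f}^k$, $\gamma(By^k - b) = z^k - w_{d_g}^k$, and $\gamma Ax^\ast = w^\ast - z^\ast$, and collapse the quadratic terms by hand. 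Your argument is more self-contained and elementary --- it never invokes Fenchel duality or the dual lower bound, and it parallels the proof of Proposition~\ref{prop:DRSupper} more transparently. The paper's route, on the other hand, makes the primal--dual symmetry explicit and reuses existing machinery, which is conceptually cleaner if one has already proved Proposition~\ref{prop:ADMMdualtoprimalconversion}. Both arrive at the same bound with the same shift $z^\ast - w^\ast$, and your explanation of why that shift appears (forced by $\gamma Ax^\ast = w^\ast - z^\ast$) is exactly right.
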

\begin{proof}
See Appendix~\ref{app:DRSADMM}.\qed
\end{proof}

\begin{remark}
Note that Equation~\eqref{prop:ADMMupper:eq:main} is nearly identical to the upper inequality in Proposition~\ref{prop:DRSupper}, except that $z^\ast - w^\ast$ appears in the former where $x^\ast$ appears in the latter.
\end{remark}

\begin{proposition}[ADMM primal lower fundamental inequality]\label{prop:ADMMlower}
Let $z^\ast$ be a fixed point of $\TPRS$ and let $w^\ast = \prox_{\gamma d_g}(z^\ast)$.  Then for all $x \in \cH_1$ and $y \in \cH_2$ we have the bound:
\begin{align}\label{prop:ADMMlower:eq:main}
f(x) + g(y) - f(x^\ast) - g(y^\ast) &\geq \dotp{Ax + By - b, w^\ast}.
\end{align}

\end{proposition}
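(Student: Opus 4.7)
The plan is to leverage the duality between fixed points of the dual $\TPRS$ operator and primal-dual optimal pairs. First, I would invoke Lemma~\ref{lem:PRSoptimality} for the dual problem~\eqref{eq:dualproblem2}: since $z^\ast$ is a fixed point of the dual PRS operator and $w^\ast = \prox_{\gamma d_g}(z^\ast)$, the lemma guarantees $w^\ast \in \zer(\partial d_f + \partial d_g)$. In other words, $w^\ast$ is a dual-optimal multiplier.

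Next, I would decode this inclusion in primal terms using Assumption~\ref{assump:precomposgradient}. The condition $0 \in \partial d_f(w^\ast) + \partial d_g(w^\ast)$ combined with $\partial d_g(w) = B^\ast\partial g^\ast(B w) - b$ (and the analogous identity for $d_f$) produces primal iterates $x^\ast \in \partial f^\ast(A^\ast w^\ast)$ and $y^\ast \in \partial g^\ast(B^\ast w^\ast)$ satisfying $Ax^\ast + By^\ast - b = 0$. By the Fenchel--Young identity $u \in \partial h^\ast(v) \Longleftrightarrow v \in \partial h(u)$ (applied to $h=f$ and $h=g$), these inclusions are equivalent to
\begin{align*}
A^\ast w^\ast \in \partial f(x^\ast) \qquad \text{and} \qquad B^\ast w^\ast \in \partial g(y^\ast),
\end{align*}
so $(x^\ast,y^\ast)$ is primal feasible and the usual KKT pairing with $w^\ast$ holds.

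With these subgradient inclusions in hand, I would finish by applying the subgradient inequality for $f$ at $x^\ast$ and for $g$ at $y^\ast$: for every $x \in \cH_1$ and $y \in \cH_2$,
\begin{align*}
f(x) - f(x^\ast) &\geq \dotp{A^\ast w^\ast, x - x^\ast} = \dotp{w^\ast, A(x - x^\ast)}, \\
g(y) - g(y^\ast) &\geq \dotp{B^\ast w^\ast, y - y^\ast} = \dotp{w^\ast, B(y - y^\ast)}.
\end{align*}
Adding these and using the primal feasibility $Ax^\ast + By^\ast = b$ collapses the right-hand side to $\dotp{w^\ast, Ax + By - b}$, which is exactly \eqref{prop:ADMMlower:eq:main}.

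The main obstacle, such as it is, lies in step two: one must carefully check that Assumption~\ref{assump:precomposgradient} actually produces a single pair $(x^\ast, y^\ast)$ realizing the zero $0 \in A\partial f^\ast(A^\ast w^\ast) + B\partial g^\ast(B^\ast w^\ast) - b$---i.e., that the selections of subgradients can be made compatibly so that $Ax^\ast + By^\ast = b$ holds exactly, not merely in the sum. After that selection is justified, the remainder is a two-line application of convexity.
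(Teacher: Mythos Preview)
Your proposal is correct and follows essentially the same approach as the paper: both proofs end with the identical two-line argument of adding the subgradient inequalities $f(x)-f(x^\ast)\geq\dotp{x-x^\ast,A^\ast w^\ast}$ and $g(y)-g(y^\ast)\geq\dotp{y-y^\ast,B^\ast w^\ast}$ and invoking $Ax^\ast+By^\ast=b$. The only difference is in how the prerequisites are obtained: the paper simply asserts the inclusions $A^\ast w^\ast\in\partial f(x^\ast)$, $B^\ast w^\ast\in\partial g(y^\ast)$ and feasibility (they follow immediately from Lemma~\ref{lem:proxdual} applied at the fixed point), whereas you rederive them from dual optimality of $w^\ast$ plus Fenchel--Young; your ``obstacle'' about compatible selections is not a real issue, since $0\in A\partial f^\ast(A^\ast w^\ast)+B\partial g^\ast(B^\ast w^\ast)-b$ by definition of Minkowski sums yields elements $x^\ast,y^\ast$ with $Ax^\ast+By^\ast=b$ directly.
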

\begin{proof}
The lower bound follows from the subgradient inequalities:
\begin{align*}
f({x}) - f(x^\ast)  \geq \dotp{{x} - x^\ast, A^\ast w^\ast} && \text{and} && g({y}) - g(y^\ast) \geq \dotp{{y} - y^\ast, B^\ast w^\ast} .
\end{align*}
We add these inequalities together and use the identity $Ax^\ast + By^\ast = b$ to get Equation~\eqref{prop:ADMMlower:eq:main}.
\qed\end{proof}

\begin{remark}
We use Inequality~\eqref{prop:ADMMlower:eq:main} in two special cases: 
\begin{align}\label{eq:admmlower}
f(x^k) + g(y^k) - f(x^\ast) - g(y^\ast) &\geq \frac{1}{\gamma}\dotp{w_{d_g}^k - w_{d_f}^k, w^\ast} \\
f(\overline{x}^k) + g(\overline{y}^k) - f(x^\ast) - g(y^\ast) &\geq \frac{1}{\gamma}\dotp{\overline{w}_{d_g}^k - \overline{w}_{d_f}^k, w^\ast}. \numberthis \label{eq:admmlowerergodic}
\end{align}
These bounds are nearly identical to the fundamental lower inequality in Proposition~\ref{prop:DRSlower}, except that $w^\ast$ appears in the former where $z^\ast - x^\ast$ appeared in the latter.
\end{remark}

\subsection{Converting dual convergence rates to primal convergence rates}\label{sec:dualtoprimalrates}

In this section, we use the inequalities deduced in Section~\ref{sec:convertinequalities} to derive convergence rates for the primal objective values. The structure of the of the proofs of the following theorems are exactly the same as in the primal convergence case in Section~\ref{sec:DRSconvergence}, except that we use the upper and lower inequalities derived in the Section~\ref{sec:convertinequalities} instead of the fundamental upper and lower inequalities in Propositions~\ref{prop:DRSupper} and~\ref{prop:DRSlower}.  This amounts to replacing the term $z^\ast - x^\ast$ and $x^\ast$ by $w^\ast$ and $z^\ast - w^\ast$, respectively, in all of the inequalities from Section~\ref{sec:DRSconvergence}. Thus, we omit the proofs.

\begin{theorem}[Ergodic primal convergence of ADMM]\label{thm:ADMMergodicconvergenceprimal}
Define the ergodic primal iterates by the formulas: $\overline{x}^k = ({1}/{\Lambda_k})\sum_{i=0}^k \lambda_ix^i$ and $\overline{y}^k = ({1}/{\Lambda_k}) \sum_{i=0}^k \lambda_i y^i.$ Then
\begin{align*}
-\frac{2\|w^\ast\|\|z^0 - z^\ast\|}{\gamma\Lambda_k} \leq f(\overline{x}^k) + g(\overline{y}^k) - f(x^\ast) - g(y^\ast) \leq \frac{\|z^0 - (z^\ast - w^\ast)\|^2}{4\gamma \Lambda_k}. \numberthis \label{thm:ADMMergodicconvergenceprimal:eq:main}
\end{align*}
\end{theorem}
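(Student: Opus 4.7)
The plan is to mirror the proof of Theorem~\ref{thm:drsergodic} almost verbatim, substituting the ADMM primal fundamental inequalities (Propositions~\ref{prop:ADMMupper} and~\ref{prop:ADMMlower}) for their DRS counterparts and using the ergodic feasibility bound from Theorem~\ref{thm:ADMMdualconvergence}. Since Proposition~\ref{prop:ADMMupper} is structurally identical to Proposition~\ref{prop:DRSupper} (with $z^\ast - w^\ast$ playing the role of $x^\ast$), and Proposition~\ref{prop:ADMMlower} produces a linear lower bound in terms of the feasibility residual $Ax + By - b$, the ergodic reduction is mechanical once these tools are in place.

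For the upper bound, I would start from Proposition~\ref{prop:ADMMupper} and exploit $\lambda_k \in (0,1]$, so that $(1 - 1/\lambda_k)\|z^{k+1} - z^k\|^2 \leq 0$ and can be discarded. Summing the resulting inequality telescopes the $\|z^i - (z^\ast - w^\ast)\|^2$ terms, yielding
\begin{align*}
4\gamma \sum_{i=0}^k \lambda_i\bigl(f(x^i) + g(y^i) - f(x^\ast) - g(y^\ast)\bigr)
 &\leq \|z^0 - (z^\ast - w^\ast)\|^2.
\end{align*}
Dividing by $4\gamma \Lambda_k$ and applying Jensen's inequality separately to $f$ at the convex combination $\overline{x}^k$ and to $g$ at $\overline{y}^k$ (with weights $\lambda_i / \Lambda_k$) gives the claimed upper bound.

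For the lower bound, I would substitute $x = \overline{x}^k$ and $y = \overline{y}^k$ directly into Proposition~\ref{prop:ADMMlower} to obtain
\begin{align*}
f(\overline{x}^k) + g(\overline{y}^k) - f(x^\ast) - g(y^\ast)
 &\geq \dotp{A\overline{x}^k + B\overline{y}^k - b,\, w^\ast}
 \geq -\|w^\ast\|\cdot\|A\overline{x}^k + B\overline{y}^k - b\|,
\end{align*}
where the second step is Cauchy--Schwarz. Plugging in the ergodic feasibility estimate from Theorem~\ref{thm:ADMMdualconvergence}, Part~\ref{thm:ADMMdualconvergence:part:ergodic}, then produces the lower bound $-2\|w^\ast\|\|z^0 - z^\ast\|/(\gamma \Lambda_k)$.

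There is no real obstacle: all of the heavy lifting is in Propositions~\ref{prop:ADMMupper} and~\ref{prop:ADMMlower} (which are already proved) and the feasibility FPR identity \eqref{eq:ADMMfeasibilityFPR} that produced Theorem~\ref{thm:ADMMdualconvergence}. The only subtlety worth flagging is that Proposition~\ref{prop:ADMMlower} evaluates at arbitrary $(x,y)$ rather than at the iterates themselves, which is precisely what lets us evaluate it at the ergodic averages without any extra Jensen-type step on the right-hand side; and that $\lambda_k \leq 1$ is essential for discarding the sign-indeterminate FPR term in the upper inequality when summing.
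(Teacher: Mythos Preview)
Your proposal is correct and follows precisely the route the paper indicates: mirror the proof of Theorem~\ref{thm:drsergodic}, replacing Propositions~\ref{prop:DRSupper} and~\ref{prop:DRSlower} by their ADMM analogues (Propositions~\ref{prop:ADMMupper} and~\ref{prop:ADMMlower}), which amounts to the substitution $x^\ast \mapsto z^\ast - w^\ast$ and $z^\ast - x^\ast \mapsto w^\ast$. The paper explicitly omits the proof for exactly this reason, and your telescoping/Jensen argument for the upper bound together with Cauchy--Schwarz plus the ergodic feasibility estimate for the lower bound is the intended mechanism.
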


{The ergodic rate presented here is stronger and easier to interpret than the one in \cite{he20121} for the ADMM algorithm ($\lambda_k \equiv 1/2$). Indeed, the rate presented in \cite[Theorem 4.1]{he20121} shows the following bound: for all $k \geq 1$ and for any bounded set $\cD\subseteq \dom(f) \times \dom(g) \times \cG$, we have the following variational inequality bound
\begin{align*}
\sup_{(x, y, w) \in \cD} &\left(f(\overline{x}^{k-1})+ g(\overline{y}^k) - f(x) - g(y) + \dotp{\overline{w}_{d_g}^k, Ax + By - b}- \dotp{A\overline{x}^{k-1} + B\overline{y}^k - b, w}\right)\\
&\leq \frac{\sup_{(x, y, w) \in \cD} \|(x, y, w) - (x^0, y^0, w_{d_g}^0)\|^2}{2(k+1)}.
\end{align*}
If  $(x^\ast, y^\ast, w^\ast) \in \cD$, then the supremum is positive and bounds the deviation of the primal objective from the lower fundamental inequality.}

\begin{theorem}[Nonergodic primal convergence of ADMM]\label{thm:ADMMprimalnonergodic}
 For all $k \geq 0$, let $\tau_k = \lambda_k(1-\lambda_k)$. In addition, suppose that $\underline{\tau} = \inf_{j \geq 0} \tau_j > 0$. Then
 \begin{enumerate}
 \item \label{thm:ADMMprimalnonergodic:part:1} In general, we have the bounds:
\begin{align*}
 \frac{-\|z^0 - z^\ast\|\|w^\ast\|}{2\sqrt{\underline{\tau}(k+1)}} \leq f(x^k) + g(y^k) - f(x^\ast) - g(y^\ast)\leq \frac{\|z^{0} - z^\ast\|(\|z^{0} - z^\ast\| + \|w^\ast\|)}{2\gamma\sqrt{\underline{\tau}(k+1)}}\numberthis \label{thm:ADMMprimalnonergodic:eqmain}
\end{align*}
and $|f(x^k) + g(y^k) - f(x^\ast) - g(y^\ast)| = o(1/\sqrt{k+1})$.
\item \label{thm:ADMMprimalnonergodic:part:2} If $\cG = \vR$  and $\lambda_k \equiv {1}/{2}$, then for all $k \geq 0$,
\begin{align*}
 \frac{-\|z^0 - z^\ast\|\|w^\ast\|}{\sqrt{2}(k+1)} \leq f(x^{k+1}) + g(y^{k+1}) - f(x^\ast) - g(x^\ast)\leq \frac{\|z^{0} - z^\ast\|(\|z^{0} - z^\ast\| + \|w^\ast\|)}{\sqrt{2}\gamma(k+1)}
 \end{align*}
 and $|f(x^{k+1}) + g(x^{k+1}) - f(x^\ast) - g(x^\ast)| = o(1/(k+1))$.
\end{enumerate}
\end{theorem}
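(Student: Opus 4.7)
The plan is to mirror the proof of Theorem~\ref{thm:drsnonergodic} line by line, replacing the fundamental inequalities (Propositions~\ref{prop:DRSupper} and~\ref{prop:DRSlower}) with their ADMM analogues (Propositions~\ref{prop:ADMMupper} and~\ref{prop:ADMMlower}). Following the remark that precedes the theorem, the role previously played by $x^\ast$ is now played by $z^\ast - w^\ast$, and the role of $z^\ast - x^\ast$ is played by $w^\ast$. The FPR estimates are imported verbatim from Corollary~\ref{cor:DRSaveragedconvergence} for Part~\ref{thm:ADMMprimalnonergodic:part:1} and from Theorem~\ref{thm:1DDRS} for Part~\ref{thm:ADMMprimalnonergodic:part:2}.

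For the upper bound in Part~\ref{thm:ADMMprimalnonergodic:part:1}, I would first observe that Proposition~\ref{prop:ADMMupper} holds with the fixed iteration parameter $\lambda_k$ replaced by a free parameter $\lambda \in (0,1]$ applied to the current iterate $z^k$: setting $z_\lambda := (T_{\mathrm{PRS}})_\lambda(z^k)$, the same algebra gives
\begin{align*}
4\gamma\lambda\bigl(f(x^k)+g(y^k)-f(x^\ast)-g(y^\ast)\bigr) \leq \|z^k-(z^\ast-w^\ast)\|^2 - \|z_\lambda-(z^\ast-w^\ast)\|^2 + \Bigl(1-\tfrac{1}{\lambda}\Bigr)\|z_\lambda-z^k\|^2,
\end{align*}
since $x^k,y^k,w_{d_g}^k,w_{d_f}^k$ are determined by $z^k$ alone. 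Expanding via $\|z^k-p\|^2-\|z_\lambda-p\|^2-\|z_\lambda-z^k\|^2 = 2\dotp{z_\lambda-p,\,z^k-z_\lambda}$ with $p=z^\ast-w^\ast$, picking $\lambda=1/2$ to kill the FPR coefficient, applying Cauchy--Schwarz, and then controlling $\|z_{1/2}-(z^\ast-w^\ast)\|\leq \|z^0-z^\ast\|+\|w^\ast\|$ by Fej\'er monotonicity (Part~\ref{prop:averagedconvergence:eq:mono} of Theorem~\ref{prop:averagedconvergence}) and the triangle inequality, together with $\|z^k-z_{1/2}\|\leq \|z^0-z^\ast\|/(2\sqrt{\underline{\tau}(k+1)})$ from Corollary~\ref{cor:DRSaveragedconvergence}, yields the claimed upper bound. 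For the lower bound I would apply~\eqref{eq:admmlower} together with the identity $w_{d_g}^k-w_{d_f}^k = (1/(2\lambda_k))(z^k-z^{k+1})$ from~\eqref{eq:DRSADMM}, and Cauchy--Schwarz, to obtain $f(x^k)+g(y^k)-f(x^\ast)-g(y^\ast)\geq -\|z^{k+1}-z^k\|\|w^\ast\|/(2\gamma\lambda_k)$, and again invoke Corollary~\ref{cor:DRSaveragedconvergence} for the FPR estimate. Since both bounds have the form (bounded constant)$\times\sqrt{\mathrm{FPR}}$ and $\sqrt{\mathrm{FPR}}=o(1/\sqrt{k+1})$ by the little-$o$ half of Corollary~\ref{cor:DRSaveragedconvergence}, the little-$o$ conclusion follows.

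Part~\ref{thm:ADMMprimalnonergodic:part:2} is immediate from the exact same argument once the FPR bound is upgraded: in $\cG=\vR$ with $\lambda_k\equiv 1/2$, Theorem~\ref{thm:1DDRS} replaces the $O(1/(k+1))$ and $o(1/(k+1))$ FPR rates by $O(1/(k+1)^2)$ and $o(1/(k+1)^2)$, so $\sqrt{\mathrm{FPR}}$ improves to $O(1/(k+1))$ and $o(1/(k+1))$. The only step requiring care, and the main potential obstacle, is the generalization of Proposition~\ref{prop:ADMMupper} to an arbitrary $\lambda$ acting on $z^k$; however, inspecting its proof (which relies only on the subgradient inclusions $A^\ast w_{d_f}^k\in\partial f(x^k)$ and $B^\ast w_{d_g}^k\in\partial g(y^k)$ from Lemma~\ref{lem:proxdual}, both independent of $\lambda$) shows that this extension is automatic, in direct parallel to how Proposition~\ref{prop:DRSupper} was used with a free $\lambda$ inside the proof of Theorem~\ref{thm:drsnonergodic}.
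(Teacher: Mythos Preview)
Your proposal is correct and is precisely the argument the paper has in mind: the authors explicitly state that the proof of Theorem~\ref{thm:ADMMprimalnonergodic} is obtained from that of Theorem~\ref{thm:drsnonergodic} by substituting Propositions~\ref{prop:ADMMupper} and~\ref{prop:ADMMlower} for Propositions~\ref{prop:DRSupper} and~\ref{prop:DRSlower}, which amounts to replacing $x^\ast$ by $z^\ast-w^\ast$ and $z^\ast-x^\ast$ by $w^\ast$ throughout. Your identification of the one nontrivial checkpoint---that Proposition~\ref{prop:ADMMupper} remains valid with a free averaging parameter $\lambda$ acting on $z^k$, since in Proposition~\ref{prop:ADMMdualtoprimalconversion} the quantities $x^k,y^k,w_{d_f}^k,w_{d_g}^k$ depend only on $z^k$ and the parameter $\lambda_k$ enters solely through the algebraic relation $z^{k+1}-z^k=2\lambda_k(w_{d_f}^k-w_{d_g}^k)$---is exactly right and mirrors how Proposition~\ref{prop:DRSupper} is used with a free $\lambda$ in the proof of Theorem~\ref{thm:drsnonergodic}.
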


{The rates presented in Theorem~\ref{thm:ADMMprimalnonergodic} are new and, to the best of our knowledge, they are the first nonergodic convergence rate results for ADMM primal objective error.}

\section{Examples}

In this section, we apply relaxed PRS and relaxed ADMM to concrete problems and explicitly bound the associated objectives and FPR terms with the convergence rates we derived in the previous sections. 

\subsection{Feasibility problems}\label{sec:feasibility}

Suppose that $C_f$ and $C_g$ are closed convex subsets of $\cH$, with nonempty intersection.  The goal of the feasibility problem is the find a point in the intersection of $C_f$ and $C_g$. In this section, we present one way to model this problem using convex optimization and apply the relaxed PRS algorithm to reach the minimum. 

In general, we cannot expect linear convergence of relaxed PRS algorithm for the feasibility problem.  We showed this in Theorem~\ref{thm:arbitrarilyslow} by constructing an example for which the DRS iteration converges in norm but does so \emph{arbitrarily slow}.  A similar result holds for the alternating projection (AP) algorithm~\cite{bauschke2009characterizing}.  Thus, in this section we focus on the convergence rate of the \emph{FPR}.

Let $\iota_{C_f}$ and $\iota_{C_g}$ be the indicator functions of $C_f$ and $C_g$. Then $x \in C_f \cap C_g$, if, and only if, $\iota_{C_f}(x) + \iota_{C_g}(x) = 0$, and the sum is infinite otherwise.  Thus, a point is in the intersection of $C_f$ and $C_g$ if, and only if, it is the minimizer of the following problem:
\begin{align}\label{sec:feasibility:eq:chiminimize}
\Min_{x \in \cH} \iota_{C_f}(x) + \iota_{C_g}(x). 
\end{align}
The relaxed PRS algorithm applied to this problem, with $f = \iota_{C_f}$ and $g = \iota_{C_g}$, has the following form: Given $z^0 \in \cH$, for all $k \geq 0$, let
\begin{align}\label{sec:feasibility:eq:DRSchi}
\begin{cases}
x_g^k = P_{C_g}(z^k); \\
x_f^k = P_{C_f}(2x_g^k - z^k); \\
z^{k+1} = z^k + 2\lambda_k(x_f^k - x_g^k).
\end{cases}
\end{align}

Because $f = \iota_{C_f}$ and $g = \iota_{C_g}$ only take on the values $0$ and $\infty$, the objective value convergence rates derived earlier do not provide meaningful information, other than $x_f^k \in C_f$ and $x_g^k \in C_g$.  However, from the FPR identity $x_{f}^k - x_g^k = 1/(2\lambda_k)(z^{k+1} - z^k),$ we find that after $k$ iterations, Corollary~\ref{cor:DRSaveragedconvergence} produces the bound
\begin{align}\label{eq:feasibilitybounddistancenonergodic}
\max\{ d^2_{C_g}(x_f^k), d^2_{C_f}(x_g^k)\} \leq \|x_f^k - x_g^k\|^2 &= o\left(\frac{1}{k+1}\right)
\end{align}
whenever $(\lambda_j)_{j \geq 0}$ is bounded away from $0$ and $1$. Theorem~\ref{thm:optimalFPR} showed that this rate is optimal. Furthermore, if we average the iterates over all $k$, Theorem~\ref{thm:drsergodic} gives the improved bound
\begin{align}\label{eq:drsergodicdistancebound}
\max\{ d^2_{C_g}(\overline{x}_f^k), d^2_{C_f}(\overline{x}_g^k)\} \leq \|\overline{x}_f^k - \overline{x}_g^k\|^2 &= O\left(\frac{1}{\Lambda_k^2}\right),
\end{align}
which is optimal by Proposition~\ref{prop:ergodicfeasibilityopt}. Note that the averaged iterates satisfy $\overline{x}_f^k = ({1}/{\Lambda_k})\sum_{i=0}^k \lambda_i x_f^i \in C_f$ and $\overline{x}_g^k = ({1}/{\Lambda_k})\sum_{i=0}^k \lambda_i x_g^i \in C_g$, because $C_f$ and $C_g$ are convex.  Thus, we can state the following proposition:
{\begin{prop}
After $k$ iterations the relaxed PRS algorithm produces a point in each set with distance of order $O({1}/{\Lambda_k})$ from each other.
\end{prop}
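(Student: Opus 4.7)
The plan is to read off the two ingredients that are already in front of us and combine them. First, I would verify membership: by definition $x_g^i = P_{C_g}(z^i) \in C_g$ and $x_f^i = P_{C_f}(2x_g^i - z^i) \in C_f$ for every $i$, and the coefficients $\lambda_i / \Lambda_k$ in the averages $\overline{x}_g^k$ and $\overline{x}_f^k$ are nonnegative and sum to one. Since $C_f$ and $C_g$ are convex, this gives $\overline{x}_f^k \in C_f$ and $\overline{x}_g^k \in C_g$, so the two averaged iterates are genuinely points of the two sets.

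Second, I would quantify their separation. The identity $x_f^k - x_g^k = (1/(2\lambda_k))(z^{k+1} - z^k)$ is immediate from the relaxed PRS update in \eqref{sec:feasibility:eq:DRSchi}, and the Fej\'er monotonicity of $(z^j)_{j\geq 0}$ with respect to $\Fix(\TPRS)$ (Part~\ref{prop:averagedconvergence:eq:mono} of Theorem~\ref{prop:averagedconvergence}) lets us invoke Theorem~\ref{eq:ergodicFPR} with $x^k = x_f^k$ and $y^k = x_g^k$. This yields the bound $\|\overline{x}_f^k - \overline{x}_g^k\| \leq (2/\Lambda_k)\|z^0 - z^\ast\|$ already recorded in \eqref{thm:drsergodic:eq:feasibility}, which is exactly $O(1/\Lambda_k)$. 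Combining the two observations gives the proposition.

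There is really no obstacle here: the proof is a one-line consequence of convexity of $C_f, C_g$ plus the already-proven feasibility rate \eqref{thm:drsergodic:eq:feasibility}. The only mild point worth flagging is that the order is $O(1/\Lambda_k)$ rather than $O(1/\Lambda_k^2)$ because the proposition states the distance between the two sets, i.e.\ $\|\overline{x}_f^k - \overline{x}_g^k\|$, not its square; taking the square root of \eqref{eq:drsergodicdistancebound} accounts for this.
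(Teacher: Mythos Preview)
Your proposal is correct and matches the paper's approach exactly: the proposition is stated as an immediate summary of the two preceding observations, namely that $\overline{x}_f^k \in C_f$ and $\overline{x}_g^k \in C_g$ by convexity, and that $\|\overline{x}_f^k - \overline{x}_g^k\| \leq (2/\Lambda_k)\|z^0 - z^\ast\|$ from Theorem~\ref{thm:drsergodic}. Your added remark about the square root accounting for the exponent, and your tracing of the feasibility bound back to Theorem~\ref{eq:ergodicFPR}, are both accurate elaborations of what the paper leaves implicit.
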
}

\subsection{Parallelized model fitting and classification}\label{sec:modelfiting}
The following general scenario appears in \cite[Chapter 8]{boyd2011distributed}. Consider the following general convex model fitting problem: Let $M : \vR^n \rightarrow \vR^m$ be a \emph{feature matrix}, let $b \in \vR^m$ be the  \emph{output} vector, let $l : \vR^m \rightarrow (-\infty, \infty]$ be a \emph{loss function} and let $r : \vR^n \rightarrow (-\infty, \infty]$ be a \emph{regularization function}. The \emph{model fitting problem} is formulated as the following minimization:
\begin{align}\label{sec:modelfiting:eq:problem}
\Min_{x\in \vR^n } \; l(Mx -b) + r(x).
\end{align}
The function $l$ is used to enforce the constraint $Mx = b + \nu$ up to some noise $\nu$ in the measurement, while $r$ enforces the \emph{regularity} of $x$ by incorporating \emph{prior knowledge} of the form of the solution. The function $r$ can also be used to enforce the uniqueness of the solution of $Mx = b$ in ill-posed problems. 

{We can solve Equation~\eqref{sec:modelfiting:eq:problem} by a direct application of relaxed PRS and  obtain $O(1/\Lambda_k)$ ergodic convergence and $o\left({1}/{\sqrt{k+1}}\right)$ nonergodic convergence rates.  Note that these rates do not require differentiability of $f$ or $g$. In contrast, the FBS algorithm requires differentiability of one of the objective functions and a knowledge of the Lipschitz constant of its gradient. The advantage of FBS is the $o(1/(k+1))$ convergence rate shown in Theorem~\ref{thm:PPAconvergence}.  However, we do not necessarily assume that $l$ is differentiable, so we may need to compute $\prox_{\gamma l(M(\cdot) - b)}$, which can be significantly more difficult than computing $\prox_{\gamma l}$\cut{, depending on the structure of $M$}. Thus, in this section we separate $M$ from $l$ by  rephrasing Equation~\eqref{sec:modelfiting:eq:problem} in the form of Problem~\eqref{eq:simplelinearconstrained}.}  

In this section, we present several different ways to split Equation~\eqref{sec:modelfiting:eq:problem}. Each splitting gives rise to a different algorithm and can be applied to general convex $l$ and $r$.  Our results predict convergence rates that hold for primal objectives, dual objectives, and the primal feasibility.  Note that in parallelized model fitting, it is not always desirable to take the time average of all of the iterates.  Indeed, when $r$ enforces sparsity, averaging the current $r$-iterate with old iterates, all of which are sparse, can produce a non-sparse iterate. This will slow down vector additions and prolong convergence.

\subsubsection{Auxiliary variable}
We can split Equation~\eqref{sec:modelfiting:eq:problem} by defining an auxiliary variable for $My$:
\begin{align*}
\Min_{x \in \vR^m, y \in\vR^n} & \; l\left(x \right) + r(y) \\
\text{subject to }  & \; My - x = b.\numberthis \label{sec:modelfiting:eq:pullAout}
\end{align*}
{The constraint in Equation~\eqref{sec:modelfiting:eq:pullAout} reduces to $Ax + By = b$ where  $B = M$ and  $A = -I_{\vR^m}$. If we set $f = l$ and $g = r$ and apply ADMM, the analysis of Section~\ref{sec:dualtoprimalrates} shows that
\begin{align*}
|l(x^k) + r(y^k) - l(My^\ast - b) - r(y^\ast)| = o\left(\frac{1}{\sqrt{k+1}}\right) && \mathrm{and} && \|My^k - b- x^k\|^2 = o\left(\frac{1}{{k+1}}\right).
\end{align*}
In particular, if $l$ is Lipschitz, then $|l(x^k) - l(My^k - b)| = o\left(1/\sqrt{k+1}\right)$. Thus, we have
\begin{align*}
|l(My^k - b) + r(y^k) - l(My^\ast - b) - r(y^\ast)| = o\left(\frac{1}{\sqrt{k+1}}\right).
\end{align*} 
A similar analysis shows that 
\begin{align*}
|l(M\overline{y}^k - b) + r(\overline{y}^k) - l(My^\ast - b) - r(y^\ast)| = O\left(\frac{1}{\Lambda_k}\right) && \mathrm{and} && \|M\overline{y}^k - b- \overline{x}^k\|^2 = O\left(\frac{1}{\Lambda_k^2}\right).
\end{align*}}

In the following two splittings, we leave the derivation of convergence rates to the reader.

\subsubsection{Splitting across examples}
We assume that $l$ is block separable: we have $l(Mx - b) = \sum_{i=1}^R l_i(M_i x - b_i)$ where
\begin{align*}
M = \begin{bmatrix} M_1 \\ \vdots \\ M_R\end{bmatrix} && \mathrm{and} && b = \begin{bmatrix} b_1 \\ \vdots \\b_R\end{bmatrix}.
\end{align*}
Each $M_i \in \vR^{m_i \times n}$ is a submatrix of $M$,  each $b_i \in \vR^{m_i}$ is a subvector of $b$, and $\sum_{i=1}^R m_i = m$.  Therefore, an equivalent form of Equation~\eqref{sec:modelfiting:eq:problem} is given by
\begin{align*}
\Min_{x_1, \cdots, x_R, y\in \vR^n} &\; \sum_{i =1}^R l_i(M_ix_i - b_i) + r(y) \\
\text{subject to } & \; x_r - y = 0, \quad r = 1, \cdots, R. \numberthis \label{sec:modelfiting:eq:acrossexamples}
\end{align*}
We say that Equation~\eqref{sec:modelfiting:eq:acrossexamples} is \emph{split across examples}.  Thus, to apply ADMM to this problem, we simply stack the vectors $x_i$, $i=1, \cdots, R$ into a vector $x = (x_1, \cdots, x_R)^T \in\vR^{nR}$. Then the constraints in Equation~\eqref{sec:modelfiting:eq:acrossexamples} reduce to $Ax + By = 0$ where $A = I_{\vR^{nR}}$ and $B y = (-y, \cdots, -y)^T$.

\subsubsection{Splitting across features}

We can also split Equation~\eqref{sec:modelfiting:eq:problem} \emph{across features}, whenever $r$ is block separable in $x$, in the sense that there exists $C > 0$, such that $r = \sum_{i=1}^C r_i(x_i)$, and $x_i \in \vR^{n_i}$ where $\sum_{i = 1}^C n_i = n$. This splitting corresponds to partitioning the columns of $M$, i.e. $M = \begin{bmatrix} M_1, \cdots,  M_C\end{bmatrix},$
and $M_i \in \vR^{m\times n_i}$, for all $i = 1, \cdots, C$. Note that for all $y \in \vR^n$, $My = \sum_{i=1}^C M_iy_i$.  With this notation, we can derive an equivalent form of Equation~\eqref{sec:modelfiting:eq:problem} given by 
\begin{align*}
\Min_{x, y \in\vR^n} & \; l\left(\sum_{i=1}^C x_i - b\right) + \sum_{i=1}^C r_i(y_i) \\
\text{subject to }  & \; x_i - M_iy_i  = 0, \quad i = 1, \cdots, C.\numberthis \label{sec:modelfiting:eq:acrossfeatures}
\end{align*}
The constraint in Equation~\eqref{sec:modelfiting:eq:acrossfeatures} reduces to $Ax + By = 0$ where $A = I_{\vR^{mC}}$ and $By = -(M_1y_1, \cdots, M_Cy_C)^T \in \vR^{nC}$.

\subsection{Distributed ADMM}

In this section our goal is to use Algorithm~\ref{alg:ADMM} to
\begin{align}\label{eq:predistributedadmm}
\Min_{x \in \cH} \sum_{i=1}^m f_i(x)
\end{align}
by using the splitting in \cite{schizas2008consensus}. {Note that we could minimize this function by reformulating it in the product space $\cH^m$ as follows:
\begin{align}
\Min_{\vx \in \cH^m} \sum_{i=1}^m f_i(x_i) + \iota_{D}(\vx),
\end{align}
where $D = \{(x, \cdots, x) \in \cH^m \mid x\in \cH\}$ is the diagonal set. Applying relaxed PRS to this problem results in a parallel algorithm where each function performs a local minimization step and then communicates its local variable to a \emph{central processor}.}  In this section, we assign each function a local variable but we never communicate it to a central processor.  Instead, each function only communicates with \emph{neighbors}.  

Formally, we assume there is a simple, connected and undirected graph $G = (V, E)$ on $|V| = m$ vertices with edges, $E$, that describe a neighbor relation among the different functions.  We introduce a new variable $x_i \in \cH$ for each function $f_i$, and, hence, we set $\cH_1 = \cH^m$, (see Section~\ref{sec:DRSADMM}). We can encode the constraint that each node communicates with neighbors by introducing an auxiliary variable for each edge in the graph:
\begin{align*}
\Min_{\vx \in \cH^m, \vy\in \cH^{|E|}} &\; \sum_{i=1}^m f_i(x_i) \\
\text{subject to } &\; x_i = y_{ij}, x_j = y_{ij}, \text{ for all } (i,j) \in E. \numberthis \label{eq:predistributedadmm2}
\end{align*}
The linear constraints in Equation~\eqref{eq:predistributedadmm2} can be written in the form of $A\vx + B\vy = 0$ for proper matrices $A$ and $B$. Thus, we reformulate Equation~\eqref{eq:predistributedadmm2} as
\begin{align*}
\Min_{\vx \in \cH^m, \vy\in \cH^{|E|}} &\; \sum_{i=1}^m f_i(x_i) + g(\vy)\\
\text{subject to } &\; A\vx + B \vy = 0, \numberthis \label{eq:distributedadmm}
\end{align*}
where $g : \cH^{|E|} \rightarrow \vR$ is the zero map.  

Because we only care about finding the value of the variable $\vx \in \cH^m$, the following simplification can be made to the sequences generated by ADMM applied to Equation~\eqref{eq:distributedadmm} with $\lambda_k = {1}/{2}$ for all $k \geq 1$ \cite{shi2013linear}: Let $\cN_i$ denote the set of neighbors of $i \in V$ and set $x_i^0 = \alpha_i^0 = 0$ for all $i \in V$. Then for all $k \geq 0$, 
\begin{align}\label{alg:distributedadmm}
\begin{cases}
x_{i}^{k+1} = \argmin_{x_i \in \cH} f_i(x) + \frac{\gamma|\cN_i|}{2}\| x_{i} - x_i^k - \frac{1}{|\cN_i|}\sum_{j \in \cN_i} x_j^k + \frac{1}{\gamma|\cN_i|}\alpha_i\|^2 + \frac{\gamma|\cN_i|}{2}\|x_i\|^2 \\
\alpha_i^{k+1} = \alpha_i^k + \gamma\left(|\cN_i|x_i^{k+1} - \sum_{j \in \cN_i}x_j^{k+1}\right).
\end{cases}
\end{align}
Equation~\eqref{alg:distributedadmm} is truly distributed because each node $i \in V$ only requires information from its local neighbors at each iteration.

In \cite{shi2013linear}, linear convergence is shown for this algorithm provided that $f_i$ are strongly convex and $\nabla f_i$ are Lipschitz.  For general convex functions, we can deduce the nonergodic rates from Theorem~\ref{thm:ADMMprimalnonergodic}
\begin{align*}
\left|\sum_{i=1}^m f_i(x_i^k) - f(x^\ast)\right| = o\left(\frac{1}{\sqrt{k+1}}\right) && \text{and} && \sum_{\substack{i \in V \\ j \in N_i}} \|x_i^k - z_{ij}^k\|^2 + \sum_{\substack{i \in V \\ i \in N_j}} \|x_j^k - z^k_{ij}\|^2 = o\left(\frac{1}{k+1}\right),
\end{align*}
and the ergodic rates from Theorem~\ref{thm:ADMMergodicconvergenceprimal}
\begin{align*}
\left|\sum_{i=1}^m f_i(\overline{x}_i^k) - f(x^\ast)\right| = O\left(\frac{1}{k+1}\right) && \text{and} && \sum_{\substack{i \in V \\ j \in N_i}} \|\overline{x}_i^k - \overline{z}_{ij}^k\|^2 + \sum_{\substack{i \in V \\ i \in N_j}} \|\overline{x}_j^k - \overline{z}_{ij}^k\|^2 = O\left(\frac{1}{(k+1)^2}\right).
\end{align*}
These convergence rates are new and complement the linear convergence results in \cite{shi2013linear}. In addition, they complement the similar ergodic rate derived in \cite{wei2012distributed} for a different distributed splitting.  

\section{Conclusion}
{In this paper, we provided a comprehensive convergence rate analysis of the FPR and objective error of several splitting algorithms under general convexity assumptions.  We showed that the convergence rates are essentially optimal in all cases. All results follow from some combination of a lemma that deduces convergence rates of summable monotonic sequences (Lemma~\ref{lem:sumsequence}), a simple diagram (Figure~\ref{fig:DRSTR}), and fundamental inequalities (Propositions~\ref{prop:DRSupper} and \ref{prop:DRSlower}) that relate the FPR to the objective error of the relaxed PRS algorithm. The most important open question is whether and how the rates we derived will improve when we enforce stronger assumptions, such as Lipschitz differentiability and/or strong convexity, on $f$ and $g$.  This will be the subject of future work.}

\begin{acknowledgements}D. Davis' work is partially supported by NSF GRFP grant DGE-0707424. W. Yin's work is partially supported by NSF grants DMS-0748839 and DMS-1317602.
\end{acknowledgements}

\bibliographystyle{spmpsci}
\bibliography{bibliography}

\begin{thebibliography}{10}
\providecommand{\url}[1]{{#1}}
\providecommand{\urlprefix}{URL }
\expandafter\ifx\csname urlstyle\endcsname\relax
  \providecommand{\doi}[1]{DOI~\discretionary{}{}{}#1}\else
  \providecommand{\doi}{DOI~\discretionary{}{}{}\begingroup
  \urlstyle{rm}\Url}\fi

\bibitem{bauschke2013rate}
Bauschke, H.H., {Bello Cruz}, J.Y., Nghia, T.T.{\relax A}., Phan, H.M., Wang,
  X.: The rate of linear convergence of the {Douglas-Rachford} algorithm for
  subspaces is the cosine of the friedrichs angle.
\newblock Journal of Approximation Theory \textbf{185}(0), 63--79 (2014).
\newblock \doi{http://dx.doi.org/10.1016/j.jat.2014.06.002}

\bibitem{bauschke2011convex}
Bauschke, H.H., Combettes, P.L.: Convex analysis and monotone operator theory
  in Hilbert spaces.
\newblock Springer (2011)

\bibitem{bauschke2009characterizing}
Bauschke, H.H., Deutsch, F., Hundal, H.: Characterizing arbitrarily slow
  convergence in the method of alternating projections.
\newblock International Transactions in Operational Research \textbf{16}(4),
  413--425 (2009)

\bibitem{beck2009fast}
Beck, A., Teboulle, M.: A fast iterative shrinkage-thresholding algorithm for
  linear inverse problems.
\newblock SIAM Journal on Imaging Sciences \textbf{2}(1), 183--202 (2009)

\bibitem{bertsekas2011incremental}
Bertsekas, D.P.: Incremental gradient, subgradient, and proximal methods for
  convex optimization: A survey.
\newblock Optimization for Machine Learning \textbf{2010}, 1--38

\bibitem{bertsekas1989parallel}
Bertsekas, D.P., Tsitsiklis, J.N.: Parallel and distributed computation:
  numerical methods.
\newblock Prentice-Hall, Inc. (1989)

\bibitem{boct2013convergence}
Bo{\c{t}}, R.I., Hendrich, C.: Convergence analysis for a primal-dual monotone+
  skew splitting algorithm with applications to total variation minimization.
\newblock Journal of Mathematical Imaging and Vision pp. 1--18

\bibitem{boyd2011distributed}
Boyd, S., Parikh, N., Chu, E., Peleato, B., Eckstein, J.: Distributed
  optimization and statistical learning via the alternating direction method of
  multipliers.
\newblock Foundations and Trends in Machine Learning \textbf{3}(1), 1--122
  (2011)

\bibitem{bredies2009forward}
Bredies, K.: {A forward--backward splitting algorithm for the minimization of
  non-smooth convex functionals in Banach space}.
\newblock Inverse Problems \textbf{25}(1), 015,005 (2009)

\bibitem{brezis1978produits}
Br{\'e}zis, H., Lions, P.: Produits infinis de r{\'e}solvantes.
\newblock Israel Journal of Mathematics \textbf{29}(4), 329--345 (1978)

\bibitem{briceno2012forward}
Brice{\~n}o-Arias, L.M.: {Forward-Douglas-Rachford} splitting and
  forward-partial inverse method for solving monotone inclusions.
\newblock arXiv preprint arXiv:1212.5942  (2012)

\bibitem{browder1966solution}
Browder, F.E., Petryshyn, W.: The solution by iteration of nonlinear functional
  equations in banach spaces.
\newblock Bulletin of the American Mathematical Society \textbf{72}(3),
  571--575 (1966)

\bibitem{chambolle2011first}
Chambolle, A., Pock, T.: {A First-Order Primal-Dual Algorithm for Convex
  Problems with Applications to Imaging}.
\newblock Journal of Mathematical Imaging and Vision \textbf{40}(1), 120--145
  (2011)

\bibitem{chen2013optimal}
Chen, Y., Lan, G., Ouyang, Y.: Optimal primal-dual methods for a class of
  saddle point problems.
\newblock arXiv preprint arXiv:1309.5548  (2013)

\bibitem{combettes2001quasi}
Combettes, P.L.: {Quasi-Fej{\'e}rian} analysis of some optimization algorithms.
\newblock Studies in Computational Mathematics \textbf{8}, 115--152 (2001)

\bibitem{combettes2004solving}
Combettes, P.L.: Solving monotone inclusions via compositions of nonexpansive
  averaged operators.
\newblock Optimization \textbf{53}(5-6), 475--504 (2004)

\bibitem{combettes2011proximal}
Combettes, P.L., Pesquet, J.C.: Proximal splitting methods in signal
  processing.
\newblock In: Fixed-point algorithms for inverse problems in science and
  engineering, pp. 185--212. Springer (2011)

\bibitem{cominetti2012rate}
Cominetti, R., Soto, J.A., Vaisman, J.: On the rate of convergence of
  {Krasnosel'ski\u{i}-Mann} iterations and their connection with sums of
  bernoullis.
\newblock Israel Journal of Mathematics pp. 1--16

\bibitem{condat2013primal}
Condat, L.: A primal--dual splitting method for convex optimization involving
  {Lipschitzian}, proximable and linear composite terms.
\newblock Journal of Optimization Theory and Applications \textbf{158}(2),
  460--479 (2013)

\bibitem{CormanandYuan}
Corman, E., Yuan, X.: {A Generalized Proximal Point Algorithm and Its
  Convergence Rate}.
\newblock SIAM Journal on Optimization \textbf{24}(4), 1614--1638 (2014)

\bibitem{deng20131}
Deng, W., Lai, M.J., Yin, W.: On the $o(1/k)$ convergence and parallelization
  of the alternating direction method of multipliers.
\newblock arXiv preprint arXiv:1312.3040  (2013)

\bibitem{eckstein1992douglas}
Eckstein, J., Bertsekas, D.P.: On the {Douglas-Rachford} splitting method and
  the proximal point algorithm for maximal monotone operators.
\newblock Mathematical Programming \textbf{55}(1-3), 293--318 (1992)

\bibitem{franchetti1986neumann}
Franchetti, C., Light, W.: On the {von Neumann} alternating algorithm in
  hilbert space.
\newblock Journal of mathematical analysis and applications \textbf{114}(2),
  305--314 (1986)

\bibitem{gabay1983chapter}
Gabay, D.: Chapter ix applications of the method of multipliers to variational
  inequalities.
\newblock Studies in mathematics and its applications \textbf{15}, 299--331
  (1983)

\bibitem{gabay1976dual}
Gabay, D., Mercier, B.: A dual algorithm for the solution of nonlinear
  variational problems via finite element approximation.
\newblock Computers \& Mathematics with Applications \textbf{2}(1), 17--40
  (1976)

\bibitem{GlowinskiADMM}
Glowinski, R., Marrocco, A.: Sur l'approximation, par {\'e}l{\'e}ments finis
  d'ordre un, et la r{\'e}solution, par p{\'e}nalisation-dualit{\'e} d'une
  classe de probl{\`e}mes de {Dirichlet} nonlin{\'e}aires.
\newblock Rev. Francaise dAut. Inf. Rech. Oper \textbf{R-2}, 41--76 (1975)

\bibitem{goldstein2009split}
Goldstein, T., Osher, S.: The split {Bregman} method for l1-regularized
  problems.
\newblock SIAM Journal on Imaging Sciences \textbf{2}(2), 323--343 (2009)

\bibitem{guler}
G{\"u}ler, O.: {On the Convergence of the Proximal Point Algorithm for Convex
  Minimization}.
\newblock SIAM Journal on Control and Optimization \textbf{29}(2), 403--419
  (1991)

\bibitem{he2012non}
He, B., Yuan, X.: On non-ergodic convergence rate of {Douglas-Rachford}
  alternating direction method of multipliers.
\newblock Tech. rep. (2012)

\bibitem{he20121}
He, B., Yuan, X.: On the {$O(1/n)$} convergence rate of the {Douglas-Rachford}
  alternating direction method.
\newblock SIAM Journal on Numerical Analysis \textbf{50}(2), 700--709 (2012)

\bibitem{knopp1956infinite}
Knopp, K.: Infinite sequences and series.
\newblock Courier Dover Publications (1956)

\bibitem{krasnosel1955two}
Krasnosel'ski\u{i}, M.A.: Two remarks on the method of successive
  approximations.
\newblock Uspekhi Matematicheskikh Nauk \textbf{10}(1), 123--127 (1955)

\bibitem{liang2014convergence}
Liang, J., Fadili, J., Peyr{\'e}, G.: Convergence rates with inexact
  nonexpansive operators.
\newblock arXiv preprint arXiv:1404.4837  (2014)

\bibitem{mann1953mean}
Mann, W.R.: Mean value methods in iteration.
\newblock Proceedings of the American Mathematical Society \textbf{4}(3),
  506--510 (1953)

\bibitem{monteiro2013iteration}
Monteiro, R.D., Svaiter, B.F.: Iteration-complexity of block-decomposition
  algorithms and the alternating direction method of multipliers.
\newblock SIAM Journal on Optimization \textbf{23}(1), 475--507 (2013)

\bibitem{MSgeneral}
Monteiro, R.D.C., Svaiter, B.F.: {On the Complexity of the Hybrid Proximal
  Extragradient Method for the Iterates and the Ergodic Mean}.
\newblock SIAM Journal on Optimization \textbf{20}(6), 2755--2787 (2010).
\newblock \doi{10.1137/090753127}.
\newblock \urlprefix\url{http://dx.doi.org/10.1137/090753127}

\bibitem{MSFBF}
Monteiro, R.D.C., Svaiter, B.F.: {Complexity of Variants of Tseng's Modified
  F-B Splitting and Korpelevich's Methods for Hemivariational Inequalities with
  Applications to Saddle-point and Convex Optimization Problems}.
\newblock SIAM Journal on Optimization \textbf{21}(4), 1688--1720 (2011).
\newblock \doi{10.1137/100801652}.
\newblock \urlprefix\url{http://dx.doi.org/10.1137/100801652}

\bibitem{nemirovskyproblem}
Nemirovsky, A., Yudin, D.: Problem complexity and method efficiency in
  optimization. 1983

\bibitem{nesterov2004introductory}
Nesterov, Y.: Introductory lectures on convex optimization: A basic course,
  vol.~87.
\newblock Springer (2004)

\bibitem{nobhuiko}
Ogura, N., Yamada, I.: Non-strictly convex minimization over the fixed point
  set of an asymptotically shrinking nonexpansive mapping.
\newblock Numerical Functional Analysis and Optimization \textbf{23}(1-2),
  113--137 (2002).
\newblock \doi{10.1081/NFA-120003674}.
\newblock \urlprefix\url{http://dx.doi.org/10.1081/NFA-120003674}

\bibitem{schizas2008consensus}
Schizas, I.D., Ribeiro, A., Giannakis, G.B.: Consensus in ad hoc wsns with
  noisy linksÑpart i: Distributed estimation of deterministic signals.
\newblock Signal Processing, IEEE Transactions on \textbf{56}(1), 350--364
  (2008)

\bibitem{ShefiTeboulle2014}
Shefi, R., Teboulle, M.: Rate of convergence analysis of decomposition methods
  based on the proximal method of multipliers for convex minimization.
\newblock SIAM Journal on Optimization \textbf{24}(1), 269--297 (2014)

\bibitem{shi2013linear}
Shi, W., Ling, Q., Yuan, K., Wu, G., Yin, W.: On the linear convergence of the
  {ADMM} in decentralized consensus optimization.
\newblock arXiv preprint arXiv:1307.5561  (2013)

\bibitem{vu2013splitting}
V{\~u}, B.C.: A splitting algorithm for dual monotone inclusions involving
  cocoercive operators.
\newblock Advances in Computational Mathematics \textbf{38}(3), 667--681 (2013)

\bibitem{wei2012distributed}
Wei, E., Ozdaglar, A.: Distributed alternating direction method of multipliers.
\newblock In: Decision and Control (CDC), 2012 IEEE 51st Annual Conference on,
  pp. 5445--5450. IEEE (2012)

\end{thebibliography}

\appendix
\section{Proofs from Section~\ref{sec:DRSADMM}}\label{app:DRSADMM}

\begin{proof}[Proposition~\ref{prop:relaxedADMM}]
By Equation~(\ref{eq:DRSADMM}) and Lemma~\ref{lem:proxdual}, we get the following formulation for the $k$-th iteration: Given $z^0 \in \cH$
\begin{align*}\begin{cases}\numberthis \label{prop:relaxedADMM:eq:big}
y^k &= \argmin_{y \in \cH_2} g(y) - \dotp{z^k,By - b} + \frac{\gamma}{2} \|By - b\|^2 \\
w_{d_g}^{k} &= z^k - \gamma (By^{k} - b) \\
x^{k} &= \argmin_{x \in \cH_1} f(x) - \dotp{2w_{d_g}^{k} - z^k, Ax} + \frac{\gamma}{2} \|Ax\|^2 \\
w_{d_f}^{k} &= 2w_{d_g}^{k} - z^k  -  \gamma Ax^{k}\\
z^{k+1} &= z^k + 2\lambda_k(w_{d_f}^k - w_{d_g}^k)
\end{cases}
\end{align*}
We will use this form to get to the claimed iteration. First,
\begin{align*}
2w_{d_g}^k - z^k = w_{d_g}^k - \gamma(By^k - b) && \mathrm{and} && w_{d_f}^k = w_{d_g}^k - \gamma (Ax^k + By^k - b).\numberthis \label{prop:relaxedADMM:eq:2}
\end{align*}
Furthermore, we can simplify the definition of $x^k$:
\begin{align*}
x^k &= \argmin_{x \in \cH_1} f(x) - \dotp{2w_{d_g}^{k} - z^k, Ax} + \frac{\gamma}{2} \|Ax\|^2 \\
&\stackrel{\eqref{prop:relaxedADMM:eq:2}}{=} \argmin_{x \in \cH_1} f(x) - \dotp{w_{d_g}^k - \gamma(By^k - b), Ax} + \frac{\gamma}{2} \|Ax\|^2 \\
&= \argmin_{x \in \cH_1} f(x) - \dotp{w_{d_g}^k, Ax + By^k - b} + \frac{\gamma}{2} \|Ax + By^k - b\|^2. \numberthis \label{prop:relaxedADMM:eq:3}
\end{align*}
Note that the last two lines of Equation~(\ref{prop:relaxedADMM:eq:3}) differ by terms independent of $x$.

We now eliminate the $z^k$ variable from the $y^k$ subproblem: because $w_{d_f}^k + z^k = 2w_{d_g}^k - \gamma Ax^k$, we have
\begin{align*}
z^{k+1} &= z^k + 2\lambda_k(w_{d_f}^k - w_{d_g}^k)\\
&\stackrel{(\ref{prop:relaxedADMM:eq:2})}{=} z^k + w_{d_f}^k - w_{d_g}^k + \gamma(2\lambda_k - 1)(Ax^k + By^k - b) \\
&= w_{d_g}^k - \gamma Ax^k - \gamma(2\lambda_k - 1)(Ax^k+By^k - b).  \numberthis \label{prop:relaxedADMM:eq:4}
\end{align*}

We can simplify the definition of $y^{k+1}$ by with the identity in Equation~(\ref{prop:relaxedADMM:eq:4}):
\begin{align*}
y^{k+1} &= \argmin_{y \in \cH_2} g(y) - \dotp{z^{k+1},By - b} + \frac{\gamma}{2} \|By - b\|^2\\
&\stackrel{(\ref{prop:relaxedADMM:eq:4})}{=} \argmin_{y \in \cH_2} g(y) - \dotp{w_{d_g}^k - \gamma Ax^k - \gamma(2\lambda_k - 1)(Ax^k+By^k - b),By - b} + \frac{\gamma}{2} \|By - b\|^2 \\
&=\argmin_{y \in \cH_2} g(y) - \dotp{w_{d_g}^{k},Ax^{k} +  By - b} + \frac{\gamma}{2} \|Ax^k + By - b + (2\lambda_k - 1)(Ax^{k} + By^{k} -b) \|^2.\numberthis \label{prop:relaxedADMM:eq:5}
\end{align*}
The result then follows from Equations~(\ref{prop:relaxedADMM:eq:big}), (\ref{prop:relaxedADMM:eq:2}), (\ref{prop:relaxedADMM:eq:3}), and (\ref{prop:relaxedADMM:eq:5}), combined with the initial conditions listed in the statement of the proposition. In particular, note that the updates of $x, y, w_{d_f},$ and $w_{d_g}$ do not explicitly depend on $z$
\qed\end{proof}

The following proposition will help us derive primal fundamental inequalities akin to Proposition~\ref{prop:DRSupper} and~\ref{prop:DRSlower}.

\begin{proposition}\label{prop:ADMMdualtoprimalconversion}
Suppose that $(z^j)_{j \geq 0}$ is generated by Algorithm~\ref{alg:ADMM}. Let $z^\ast$ be a fixed point of $\TPRS$ and let $w^\ast = \prox_{\gamma d_f}(z^\ast)$. Then the following identity holds:
\begin{align*}
4\gamma \lambda_k (f(x^k) + g(y^k) - f(x^\ast) - g(y^\ast)) &= -4\gamma \lambda_k (d_f(w_{d_f}^k) + d_g(w_{d_g}^k) - d_f(w^\ast) - d_g(w^\ast))  \\
&+ \left(  2\left(1 - \frac{1}{2\lambda_k}\right) \|z^{k} - z^{k+1}\|^2 + 2\dotp{ z^k - z^{k+1}, z^{k+1}} \right).\numberthis \label{prop:ADMMdualtoprimalconversion:eq:main2}
\end{align*}
\end{proposition}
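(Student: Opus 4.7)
The plan is to use the Fenchel--Young inequality \emph{with equality} at both the current iterates and at the optimizer, then carry out a purely algebraic reduction of the resulting cross terms using the identities relating $w_{d_f}^k, w_{d_g}^k, z^k, z^{k+1}$ established in Section~\ref{sec:DRSADMM}.

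First, Lemma~\ref{lem:proxdual} gives the subgradient inclusions $A^\ast w_{d_f}^k \in \partial f(x^k)$ and $B^\ast w_{d_g}^k \in \partial g(y^k)$, so Fenchel--Young holds with equality:
\begin{align*}
f(x^k) &= \dotp{Ax^k, w_{d_f}^k} - f^\ast(A^\ast w_{d_f}^k) = \dotp{Ax^k, w_{d_f}^k} - d_f(w_{d_f}^k),\\
g(y^k) &= \dotp{By^k, w_{d_g}^k} - g^\ast(B^\ast w_{d_g}^k) = \dotp{By^k - b, w_{d_g}^k} - d_g(w_{d_g}^k).
\end{align*}
At the saddle point $(x^\ast, y^\ast, w^\ast)$ the same identities hold with $w_{d_f}^k, w_{d_g}^k$ replaced by $w^\ast$; adding them and using the primal feasibility $Ax^\ast + By^\ast = b$ makes the cross terms cancel, yielding $f(x^\ast) + g(y^\ast) = -d_f(w^\ast) - d_g(w^\ast)$. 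Subtracting, I obtain
\begin{align*}
f(x^k) + g(y^k) - f(x^\ast) - g(y^\ast) = -\bigl(d_f(w_{d_f}^k) + d_g(w_{d_g}^k) - d_f(w^\ast) - d_g(w^\ast)\bigr) + R_k,
\end{align*}
where $R_k := \dotp{Ax^k, w_{d_f}^k} + \dotp{By^k - b, w_{d_g}^k}$.

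The key step, and the main obstacle, is to show that $4\gamma\lambda_k R_k$ equals the bracketed $z$-expression in~\eqref{prop:ADMMdualtoprimalconversion:eq:main2}. For this I will use Proposition~\ref{prop:relaxedADMM} which gives $w_{d_g}^k = z^k - \gamma(By^k - b)$ and $w_{d_f}^k = w_{d_g}^k - \gamma(Ax^k + By^k - b)$; solving,
\begin{align*}
By^k - b = \tfrac{1}{\gamma}(z^k - w_{d_g}^k), \qquad Ax^k = \tfrac{1}{\gamma}(2w_{d_g}^k - w_{d_f}^k - z^k).
\end{align*}
Substituting into $R_k$ and expanding yields
\begin{align*}
\gamma R_k = 2\dotp{w_{d_g}^k, w_{d_f}^k} - \|w_{d_f}^k\|^2 - \|w_{d_g}^k\|^2 - \dotp{z^k, w_{d_f}^k - w_{d_g}^k} = -\|w_{d_f}^k - w_{d_g}^k\|^2 - \dotp{z^k, w_{d_f}^k - w_{d_g}^k}.
\end{align*}

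Finally, I will invoke the fundamental identity $w_{d_f}^k - w_{d_g}^k = \tfrac{1}{2\lambda_k}(z^{k+1} - z^k)$ from~\eqref{eq:DRSADMM} to convert everything into $z$-variables, giving
\begin{align*}
4\gamma\lambda_k R_k = -\tfrac{1}{\lambda_k}\|z^{k+1} - z^k\|^2 + 2\dotp{z^k - z^{k+1}, z^k}.
\end{align*}
A one-line rearrangement $2\dotp{z^k - z^{k+1}, z^k} = 2\dotp{z^k - z^{k+1}, z^{k+1}} + 2\|z^k - z^{k+1}\|^2$ then produces exactly the claimed right-hand side, completing the proof. The whole argument is essentially a Fenchel duality computation; the only non-trivial part is keeping track of the cross terms during the algebraic reduction of $R_k$ and recognizing the final rewriting that turns $z^k$ into $z^{k+1}$ in the inner product. \qed
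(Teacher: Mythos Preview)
Your proof is correct and follows essentially the same route as the paper's: Fenchel--Young with equality at both the iterates and the optimum, followed by an algebraic reduction of the cross term using the identities $w_{d_g}^k = z^k - \gamma(By^k-b)$, $w_{d_f}^k - w_{d_g}^k = -\gamma(Ax^k+By^k-b) = \tfrac{1}{2\lambda_k}(z^{k+1}-z^k)$ from Proposition~\ref{prop:relaxedADMM}. The only cosmetic difference is that the paper groups the cross term as $\tfrac{1}{\gamma}\dotp{w_{d_f}^k - w_{d_g}^k,\, w_{d_f}^k + \gamma(By^k-b)}$ before substituting, whereas you substitute for $Ax^k$ and $By^k-b$ directly in $R_k$; both computations land on the same quadratic expression in $z^k,z^{k+1}$.
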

\begin{proof}
We have the following subgradient inclusions from Proposition~\ref{prop:relaxedADMM}: $A^\ast w_{d_f}^k \in \partial f(x^k)$ and $B^\ast w_{d_g}^k \in \partial g(y^k).$ From the Fenchel-Young inequality \cite[Proposition 16.9]{bauschke2011convex} we have the expression for $f$ and $g$:
\begin{align*}
d_f(w_{d_f}^k) = \dotp{ A^\ast w_{d_f}^k, x^k} - f(x^k) && \mathrm{and} && 
d_f(w_{d_g}^k) = \dotp{ B^\ast w_{d_g}^k, y^k} - g(y^k) - \dotp{w_{d_g}^k, b}.
\end{align*}
Therefore,
\begin{align*}
 -d_f(w_{d_f}^k) - d_g(w_{d_g}^k) &= f(x^k) + g(y^k) - \dotp{ Ax^k + By^k - b, w_{d_f}^k} - \dotp{w_{d_g}^k - w_{d_f}^k, By^k - b}.
\end{align*}
Let us simplify this bound with an identity from Proposition~\ref{prop:relaxedADMM}: from $w_{d_f}^k - w_{d_g}^k = -\gamma(Ax^k + By^k - b),$ it follows that
\begin{align}\label{prop:ADMMdualtoprimalconversion:eq:2}
 -d_f(w_{d_f}^k) - d_g(w_{d_g}^k) &= f(x^k) + g(y^k) +\frac{1}{\gamma} \dotp{w_{d_f}^k - w_{d_g}^k, w_{d_f}^k + \gamma (By^k - b)}.
\end{align}
Recall that $\gamma(By^k - b) = z^k - w_{d_g}^k$.  Therefore $$w_{d_f}^k + \gamma (By^k - b) = z^k + (w_{d_f}^k - w_{d_g}^k) = z^k + \frac{1}{2\lambda_k}(z^{k+1} - z^k) = \frac{1}{2\lambda_k} (2\lambda_k - 1) (z^{k} - z^{k+1}) + z^{k+1},$$ and the inner product term can be simplified as follows:
\begin{align*}
\frac{1}{\gamma} \dotp{w_{d_f}^k - w_{d_g}^k, w_{d_f}^k + \gamma (By^k - b)} &= \frac{1}{\gamma}\dotp{\frac{1}{2\lambda_k}(z^{k+1} - z^{k}), \frac{1}{2\lambda_k}(2\lambda_k - 1)(z^k - z^{k+1})} \\
& +  \frac{1}{\gamma}\dotp{\frac{1}{2\lambda_k}(z^{k+1} - z^{k}), z^{k+1}} \\
&=- \frac{1}{2\gamma\lambda_k} \left(1 - \frac{1}{2\lambda_k}\right) \|z^{k+1} - z^k\|^2 \\
&-  \frac{1}{2\gamma\lambda_k}\dotp{z^k - z^{k+1}, z^{k+1}}. \numberthis \label{prop:ADMMdualtoprimalconversion:eq:3}
\end{align*}

Now we derive an expression for the dual objective at a dual optimal $w^\ast$. First, if $z^\ast$ is a fixed point of $\TPRS$, then $0 = \TPRS(z^\ast) - z^\ast = 2(w_{d_g}^\ast - w_{d_f}^\ast)  = -2\gamma(Ax^\ast + By^\ast - b)$. Thus, from Equation~\eqref{prop:ADMMdualtoprimalconversion:eq:2} with $k$ replaced by $\ast$, we get 
\begin{align}\label{prop:ADMMdualtoprimalconversion:eq:4}
-d_f(w^\ast) - d_g(w^\ast) &= f(x^\ast) + g(y^\ast) + \dotp{Ax^\ast + Bx^\ast - b, w^\ast} = f(x^\ast) + g(y^\ast).
\end{align}

Therefore, Equation~\eqref{prop:ADMMdualtoprimalconversion:eq:main2} follows by subtracting \eqref{prop:ADMMdualtoprimalconversion:eq:4} from Equation~\eqref{prop:ADMMdualtoprimalconversion:eq:2}, rearranging and using the identity in Equation~\eqref{prop:ADMMdualtoprimalconversion:eq:3}.
\qed\end{proof}

\begin{proof}[Proposition~\ref{prop:ADMMupper}]
The fundamental lower inequality in Proposition~\ref{prop:DRSlower} applied to $d_f + d_g$ shows that
\begin{align*}
-4\gamma \lambda_k (d_f(w_{d_f}^k) + d_g(w_{d_g}^k) - d_f(w^\ast) - d_g(w^\ast)) &\leq 2\dotp{z^{k+1} - z^k, z^\ast - w^\ast}.
\end{align*}
The proof then follows from Proposition~\ref{prop:ADMMdualtoprimalconversion}, and the simplification:
\begin{align*}
2\dotp{z^{k} - z^{k+1}, z^{k+1} - (z^\ast - w^\ast)} &+  2\left(1- \frac{1}{2\lambda_k} \right) \|z^{k} - z^{k+1}\|^2\\
&= \|z^k - (z^\ast - w^\ast)\|^2 - \|z^{k+1} - (z^\ast - w^\ast)\|^2  + \left(1- \frac{1}{\lambda_k} \right) \|z^{k} - z^{k+1}\|^2. \qquad \qed
\end{align*}
\end{proof}

\end{document}